\newtheorem*{theorem*}{Theorem}
\newtheorem{theorem}{Theorem}[section]
\newtheorem{lemma}[theorem]{Lemma}
\newtheorem{corollary}[theorem]{Corollary}
\newtheorem{proposition}[theorem]{Proposition}
\theoremstyle{definition}
\newtheorem{definition}[theorem]{Definition}
\newtheorem{example}[theorem]{Example}
\newtheorem{remark}[theorem]{Remark}
\numberwithin{equation}{section} \numberwithin{figure}{section}
\numberwithin{equation}{section}
\newtheorem*{ack}{Acknowledgments}
\author{Manuel Rivera and Mahmoud Zeinalian}
\newcommand{\Addresses}{{
  \bigskip
  \footnotesize

    \textsc{Manuel Rivera, Department of Mathematics, University of Miami, 1365 Memorial Drive, Coral
Gables, FL 33146 and
Departamento de Matem\'aticas, Cinvestav, Av. Instituto Polit\'ecnico Nacional 2508, Col. San Pedro Zacatenco, M\'exico, D.F. CP 07360, M\'exico} \par\nopagebreak
  \textit{E-mail address} \texttt{manuelr@math.miami.edu}

  \medskip
  \medskip

  \textsc{Mahmoud Zeinalian, Department of Mathematics, Lehman College of CUNY, 250 Bedford Park Boulevard West,
Bronx, NY 10468
   }\par\nopagebreak
  \textit{E-mail address} \texttt{mahmoud.zeinalian@lehman.cuny.edu}

}}
\begin{document}

\begin{abstract} We describe several equivalent models for the $\infty$-category of $\infty$-local systems of chain complexes over a space using the framework of quasi-categories. We prove that the given models are equivalent as $\infty$-categories by exploiting the relationship between the differential graded nerve functor and the cobar construction. We use one of these models to calculate the quasi-categorical colimit of an $\infty$-local system in terms of a twisted tensor product. 
\end{abstract}

\title[The colimit of an $\infty$-local system as a twisted tensor product]{The colimit of an $\infty$-local system as a twisted tensor product}
\maketitle

\section{Introduction}
The goal of this paper is to give an explicit model for the homotopy coherent colimit of an $\infty$-local system of chain complexes over a topological space in terms of Brown's twisted tensor product construction. We use the framework of quasi-categories to describe three equivalent $\infty$-categories of $\infty$-local systems and then use one of these models to calculate the desired colimit. We generalize the following classical story to the homotopy coherent setting. Let $\mathbf{k}$ be a field, denote by $Cat_{\mathbf{k}}$ the category of $\mathbf{k}$-linear categories and by $Cat$ the category of (ordinary) categories. Consider the forgetful functor $U:Cat_{\mathbf{k}} \to Cat$ which forgets the linear structure on the morphisms of a linear category and let $F:Cat \to Cat_{\mathbf{k}}$ be its left adjoint. A representation of a group $G$ may be defined as a functor $\beta:G \to U(\textbf{k}\text{-mod})$, where $G$ is thought of as a category with a single object, denoted by $b$, and $\mathbf{k}\text{-mod} \in Cat_{\mathbf{k}}$ is the $\mathbf{k}$-linear category of $\mathbf{k}$-vector spaces. By adjunction, we obtain a functor $\tilde{\beta}:F(G) \to \mathbf{k}\text{-mod}$. Note that $F(G)=\mathbf{k}[G]$ is the group algebra of $G$, thought of as a $\textbf{k}$-linear category with the single object $b$, and $\tilde{\beta} (b)=M$ is a left $\mathbf{k}[G]$-module. The two perspectives provided by the adjunction $(F,U)$ are useful when studying linear representations of groups. The colimit of the functor $\beta: G \to U(\textbf{k}\text{-mod})$ is the $\mathbf{k}$-module of coinvariants $\mathbf{k} \otimes_{ \mathbf{k}[G]} M$, where the right $\mathbf{k}[G]$-module structure on $\mathbf{k}$ is given by the augmentation $\mathbf{k}[G] \to \mathbf{k}$. Consider the composition of functors $i\circ \beta: G \to U(\mathbf{k}\text{-mod}) \to \text{Ch}_{\mathbf{k}}$, where $\text{Ch}_{\mathbf{k}}$ denotes the category of $\mathbf{k}$-chain complexes and $i$ is the inclusion functor. The homotopy colimit of $i \circ \beta$, with respect to the standard model structure on $\text{Ch}_{\mathbf{k}}$, is the derived coinvariants $\mathbf{k} \otimes_{\mathbf{k}[G]}^{\mathbb{L}} M$ and a model for it can be obtained by resolving $\mathbf{k}$ through the bar resolution over $\mathbf{k}[G]$. If $G=\pi_1(X,b)$ is the fundamental group of a pointed path-connected space $(X,b)$, then representations of $G$ are classical local systems over $X$. In this case, the colimit of a representation of $G=\pi_1(X,b)$ is the homology with local coefficients and the homotopy colimit may be interpreted as a chain complex, unique up to quasi-isomorphism, which calculates such homology groups. 

We refine the above constructions and results to the case of $\infty$-representations of $\infty$-groupoids, also known as $\infty$-local systems. To make sense of this, we replace $Cat$  in the above setting by the category $Set_{\Delta}$ of simplicial sets and the $\mathbf{k}$-linear category $\mathbf{k}\text{-mod}$ by the differential graded (dg) category $Ch_{\mathbf{k}}$ of $\mathbf{k}$-chain complexes.\footnote{Note the slight change in notation here. We used $\text{Ch}_{\mathbf{k}}$ for the ordinary category of chain complexes. In the remaining of the paper, we use $Ch_{\mathbf{k}}$ for the dg-category of $\mathbf{k}$-chain complexes.} The analogue of $U$ now becomes the dg nerve functor $N_{dg}: dg Cat_{\mathbf{k}} \to Set_\Delta$, where $dg Cat_{\mathbf{k}}$ is the ordinary category of dg categories. The analogue of $F$ is a functor $\Lambda: Set_{\Delta} \to dg Cat_{\mathbf{k}}$, described explicitly in section 4. For a connected Kan complex $K$, $\Lambda(K)$ is closely related to the differential graded associative algebra (dg algebra, for short) of singular chains on the based (Moore) loop space of $|K|$ as explained in \cite{RiZe17}.

We replace the fundamental group $G=\pi_1(X,b)$ in the above discussion by the fundamental $\infty$-groupoid of $X$, namely, by the Kan complex $\text{Sing}(X)$ of singular simplices in $X$. If $X$ is path-connected and $b \in X$, we use an equivalent Kan complex $\text{Sing}(X,b)$ with a single $0$-simplex. Instead of representations $\beta: G \to U(\textbf{k}\text{-mod})$ we now consider maps of simplicial sets $\beta: \text{Sing}(X,b) \to N_{dg}Ch_{\mathbf{k}}$. In this paper, we give a model for the colimit of $\beta$ using the framework of quasi-categories of \cite{Lu09}. In the context of this article, the term \textit{colimit} will always be understood in the homotopy coherent sense as introduced in section 1.2.13 of \cite{Lu09}. By adjunction, the data of a map $\beta: \text{Sing}(X,b) \to N_{dg}Ch_{\mathbf{k}}$ is equivalent to a dg functor $\tilde{\beta}: \Lambda(\text{Sing}(X,b)) \to Ch_{\mathbf{k}}$, which we can interpret as a chain complex $\tilde{\beta}(b)=M$ equipped with an action over the dg algebra $\Lambda(\text{Sing}(X,b))(b,b)$. We proved in \cite{RiZe17} that $\Lambda(\text{Sing}(X,b))(b,b)$ is isomorphic  as a dg algebra to $\Omega C$, the cobar construction of the dg coalgebra $C$ of normalized chains on $\text{Sing}(X,b)$ with Alexander-Whitney coproduct; this observation opens up the possibility of using certain algebraic tools to study $\infty$-local systems. 

Using quasi-categories as models for $\infty$-categories we prove that the $\infty$-category of $\infty$-local systems $Loc_X^{\infty}$ is equivalent to the $\infty$-derived category of dg $\Omega C$-modules. $Loc_X^{\infty}$ is defined as the quasi-category of functors $\text{Fun}(\text{Sing}(X,b), N_{dg}Ch_{\mathbf{k}})$. We give two equivalent quasi-categorical models for the $\infty$-derived category of dg $\Omega C$-modules by taking the dg nerve of two dg categories $\text{Mod}^{\infty}_{\Omega C}$ and $\text{Mod}^{\tau}_{\Omega C}$ introduced in sections 5.2 and 5.3, respectively. The first, $\text{Mod}^{\infty}_{\Omega C}$, is defined for any dg algebra $A$ using the notion of $A_{\infty}$-morphisms of $A$-modules while the second, $\text{Mod}^{\tau}_{\Omega C}$, is a simplification of the first and it may be described in terms of twisted tensor products, a notion introduced in \cite{Br59}. In section 6, we explicitly describe weak equivalences of quasi-categories $$N_{dg}\text{Mod}^{\infty}_{\Omega C} \simeq N_{dg}\text{Mod}^{\tau}_{\Omega C} \simeq Loc_{X}^{\infty}.$$ The existence of a weak equivalence $Loc_{X}^{\infty} \simeq N_{dg}\text{Mod}^{\infty}_{\Omega C}$ is a folklore result which has been used in different contexts, as seen in Remark 5.4 of \cite{BrDy16}, for example. There are more abstract ways of proving this claim, however, here we give a direct proof based on the combinatorics of simplicial sets. Another version of this equivalence also appears in \cite{Br-Ma18}. Finally, Koszul duality as discussed in \cite{Po11} implies that for any conilpotent dg coalgebra $C$ the $\infty$-derived category of dg $C$-comodules is equivalent to the $\infty$-derived category of dg $\Omega C$-modules and, henceforth, equivalent to $Loc^{\infty}_X$; a statement shown in \cite{Br-Ma18}, and in \cite{ChHoLa18} in a dual formulation.

In section 7, we use $N_{dg}\text{Mod}^{\tau}_{\Omega C}$ to compute an explicit model for the colimit of an $\infty$-local system as a twisted tensor product between the dg coalgebra $C$ of chains on the space and the dg $\Omega C$-module determined by the $\infty$-local system. More precisely, we prove the following
\begin{theorem*}
Let $(X,b)$ be a pointed path-connected space. For any $\infty$-local system $\beta: \text{Sing}(X,b) \to N_{dg}Ch_{\mathbf{k}}$ the colimit of $\beta$ is given by the twisted tensor product $(M \otimes C, \partial_{\otimes_{\tau}}),$ where $C$ is the dg coalgebra of normalized simplicial chains in $\text{Sing}(X,b)$, $M$ is the right dg $\Omega C$-module determined by $\beta$, and $\partial_{\otimes_{\tau}}$ denotes the twisted tensor product differential as introduced in \cite{Br59}.
\end{theorem*}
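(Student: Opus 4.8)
The plan is to compute the colimit inside the model $N_{dg}\mathrm{Mod}^{\tau}_{\Omega C}$ of $Loc_X^{\infty}$ established in section 6, and to pin the colimit functor down through its universal property as a left adjoint. Since $N_{dg}Ch_{\mathbf{k}}$ is a stable $\infty$-category with arbitrary coproducts, it admits all small colimits; hence the colimit functor $\operatorname{colim}\colon Loc_X^{\infty}=\mathrm{Fun}(\mathrm{Sing}(X,b),N_{dg}Ch_{\mathbf{k}})\to N_{dg}Ch_{\mathbf{k}}$ exists and is left adjoint to the constant-diagram functor $\mathrm{const}\colon N_{dg}Ch_{\mathbf{k}}\to Loc_X^{\infty}$ (equivalently, to restriction along the projection $\mathrm{Sing}(X,b)\to\Delta^{0}$). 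Transporting this adjunction along the equivalence $G\colon N_{dg}\mathrm{Mod}^{\tau}_{\Omega C}\xrightarrow{\ \sim\ }Loc_X^{\infty}$ of section 6 ---under which $\beta$ corresponds to its right dg $\Omega C$-module $M$--- the assignment $M\mapsto\operatorname{colim}\beta$ becomes the left adjoint of $G^{-1}\circ\mathrm{const}$. So it suffices to (i) identify $G^{-1}\circ\mathrm{const}$, and (ii) compute its left adjoint and recognize the answer as Brown's twisted tensor product.

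For step (i) I would show that $G$ carries the right dg $\Omega C$-module $\epsilon^{*}V$ ---the restriction of scalars of $V\in Ch_{\mathbf{k}}$ along the augmentation $\epsilon\colon\Omega C\to\mathbf{k}$--- to the constant $\infty$-local system at $V$. The input is the identification from \cite{RiZe17} of $\Omega C$ with the endomorphism dg algebra $\Lambda(\mathrm{Sing}(X,b))(b,b)$, together with the fact that the projection $\mathrm{Sing}(X,b)\to\Delta^{0}$ induces, after applying $\Lambda$ and passing to endomorphisms, exactly the augmentation $\epsilon$; hence, via the $(\Lambda,N_{dg})$-adjunction, $\mathrm{const}_{V}$ is adjoint to the dg functor $\Lambda(\mathrm{Sing}(X,b))\to\Lambda(\Delta^{0})=\mathbf{k}\to Ch_{\mathbf{k}}$ determined by $\epsilon$ and $V$. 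Feeding $\epsilon^{*}V$ into the explicit recipe defining $G$ (sections 5.2--5.3 and 6) then reproduces $\mathrm{const}_{V}$, the key point being that $\epsilon\circ\tau=0$ for the universal twisting cochain $\tau\colon C\to\Omega C$ ---indeed $\tau$ lands in the augmentation ideal of $\Omega C$ and $C$ has a single group-like generator in degree $0$--- so the twisted structure maps of $\epsilon^{*}V$ all degenerate and the resulting local system is genuinely constant. One must check this identification is natural in $V$, but that is routine bookkeeping.

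For step (ii), once $G^{-1}\circ\mathrm{const}\simeq\epsilon^{*}$, the functor $M\mapsto\operatorname{colim}\beta$ is the left adjoint of $\epsilon^{*}$, namely derived extension of scalars $M\mapsto M\otimes^{\mathbb{L}}_{\Omega C}\mathbf{k}$ along $\epsilon$. To make this explicit I would resolve the left $\Omega C$-module $\mathbf{k}$ by the one-sided twisted tensor product $(\Omega C\otimes C,\partial_{\tau})$ associated to $\tau\colon C\to\Omega C$: its underlying graded object is the free left $\Omega C$-module on $C$, so it is semifree, hence a cofibrant replacement of $\mathbf{k}$, and the classical acyclicity of the cobar construction gives a quasi-isomorphism $(\Omega C\otimes C,\partial_{\tau})\xrightarrow{\ \sim\ }\mathbf{k}$. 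Therefore the derived extension of scalars is modeled by $M\otimes_{\Omega C}(\Omega C\otimes C,\partial_{\tau})$, and unwinding the relative tensor product collapses this to $(M\otimes C,\partial_{\otimes_{\tau}})$ with exactly Brown's twisted differential. This chain complex thus represents $\operatorname{colim}\beta$ in $N_{dg}Ch_{\mathbf{k}}$, which is the assertion of the theorem.

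The main obstacle is step (i): following a constant diagram through the multi-stage equivalence of section 6 and checking, compatibly in $V$, that it lands on the augmentation module $\epsilon^{*}V$. Everything there rests on the vanishing $\epsilon\circ\tau=0$ and on a careful accounting of signs and of the $A_{\infty}$/twisted structure maps of sections 5.2--5.3; step (ii), by contrast, is essentially the classical fact that the twisted tensor product with the acyclic cobar twisting cochain computes a derived tensor product, which I would cite rather than reprove. A more self-contained alternative, bypassing the abstract adjunction, would be to write down the colimit cocone $\mathrm{Sing}(X,b)^{\triangleright}\to N_{dg}Ch_{\mathbf{k}}$ with cone point $(M\otimes C,\partial_{\otimes_{\tau}})$ directly from the twisted structure maps of $M$, and verify initiality by producing, for each target $W$, a natural weak equivalence from the mapping space out of the cone point to the space of cocones under $\beta$ with tip $W$; the difficulty then migrates to controlling those mapping spaces, with the same combinatorial input.
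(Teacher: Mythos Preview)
Your strategy is sound and the argument would go through, but it follows a different route from the paper. The paper does exactly what you sketch as the ``more self-contained alternative'' at the end: it writes down an explicit cocone $\overline{\beta}\colon \text{Sing}(X,b)\star\Delta^0\to N_{dg}Ch_{\mathbf{k}}$ with cone point $(M\otimes C,\partial_{\otimes_\tau})$ (sending a simplex $\sigma$ ending at the cone point to $m\mapsto m\otimes\partial_n\sigma$), and then verifies Lurie's criterion (Lemma~4.2.4.3 of \cite{Lu09}) directly. The verification amounts to two observations: first, that $\text{Hom}^R_{N_{dg}Ch_{\mathbf{k}}}((M\otimes C,\partial_{\otimes_\tau}),Y)\cong \text{Hom}^R_{N_{dg}\text{Mod}^{\tau}_{\Omega C}}(M,\epsilon^*Y)$, which is immediate from the definition of morphisms in $\text{Mod}^{\tau}_{\Omega C}$ as $C$-comodule maps; second, that the explicit homotopy equivalence $\psi$ of Proposition~6.5 identifies $\text{Hom}^R_{N_{dg}\text{Mod}^{\tau}_{\Omega C}}(M,\epsilon^*Y)$ with $\text{Hom}^R_{Loc^{\infty}_X}(\beta,\delta(Y))$ and realizes the natural transformation $\alpha$. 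Your steps (ii) and (i) are the global, adjoint-functor incarnations of these same two observations, so the underlying content is identical; the paper just avoids invoking abstract existence of colimits, transport of adjunctions, and derived extension of scalars by staying at the level of mapping spaces in the specific model $\text{Mod}^{\tau}_{\Omega C}$, where the relevant adjunction is a tautology. What the paper's approach buys is an explicit cocone rather than merely the cone object, and fewer appeals to $\infty$-categorical machinery; what your approach buys is a cleaner conceptual picture (colimit $=$ derived coinvariants $=$ $M\otimes^{\mathbb L}_{\Omega C}\mathbf{k}$) at the cost of having to promote the pointwise identification $\theta(\text{const}_V)=\epsilon^*V$ to an equivalence of $\infty$-functors, which is the bookkeeping you flag.
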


An immediate consequence of our results is an alternative and more conceptual proof of Brown's classical theorem on modeling the chains on the total space of a fibration in terms of chains on the base and chains on the fiber, as explained in section 8.

We go over some preliminary notions in section 2 to keep the article relatively self-contained and accessible to a broader audience. In section 3, we review the rigidification functor described in \cite{Lu09} and its cubical version introduced in \cite{RiZe17}, which are useful for understanding the dg nerve functor $N_{dg}$ and its adjoint $\Lambda$ through a different angle but not essential for understanding the results of sections 5,6, and 7. In section 4, we define $\Lambda$ and $N_{dg}$ and discuss some of their properties. 

Our main motivation for understanding the colimit of an $\infty$-local system is the possible applications to $L$-theory through the work of Ranicki-Weiss \cite{RaWe90} where the category of fractured complexes with Poincar\'e duality is described as classical local systems. It is our goal to replace the fundamental group ring in their discussion with an algebraic model for the chains on the based loop space in hope of combining the algebraic theory of homotopy types a la Sullivan, Quillen, and Mandell with the algebraic L-theory a la Ranicki and Weiss and obtain a purely algebraic characterization of manifolds. 

Another application in mind is to describe a model for the colimit of local systems of dg categories that come up in the discussions of Mirror Symmetry through generalizations of the main results of this article. In fact, similar results to those of this article should hold for $\infty$-local systems with values in more general $\infty$-categories. Such applications and generalizations will be studied in subsequent work. 

\begin{ack} We would like to thank the referee for a detailed review and constructive comments that have improved the presentation of the paper. The first author would like to acknowledge the support by the grant Fordecyt 265667. The second author would like to thank the \textit{Max Planck Institute for Mathematics} in Bonn for their support and hospitality during his visits. 
\end{ack}

\section{Preliminaries}

\subsection{Simplicial sets}

Let $\mathbf{\Delta}$ be the category whose objects are non-empty finite ordinals $\{[0], [1], [2], ... \}$ and morphisms are order preserving maps. A \textit{simplicial set} is a functor $K: \mathbf{\Delta}^{op} \to Set$ where $Set$ denotes the category of sets. We write $K_n$ for the set $K([n])$. We denote by $Set_{\Delta}$ the category having simplicial sets as objects and natural transformations as morphisms. The \textit{standard $n$-simplex} $\Delta^n$ is the simplicial set given by $\Delta^n: [m] \mapsto \text{Hom}_{\Delta}( [m],[n])$. There is a natural bijection of sets 
\begin{equation}\label{Yoneda}
K_n \cong Set_{\Delta}(\Delta^n, K).
\end{equation}

Morphisms in the category $\mathbf{\Delta}$ are generated by functions of two types: co-faces $d_i: [n] \to [n+1]$, $0 \leq i \leq n+1$, and co-degeneracies $s_j: [n] \to [n-1]$, $0 \leq j \leq n-1$. For any simplicial set $K$ and for any integer $n \geq 0$ we have face morphisms $K(d_i): K_{n+1} \to K_n$ and degeneracy morphisms $K(s_j): K_{n-1} \to K_n$ which we we call the \textit{structure maps} of $K$. The elements of $K_0$ are called the \textit{vertices} of $K$ and for any $\sigma \in K_n$ we denote by $\text{min} \sigma$ and $\text{max}\sigma$ the first and last vertices of $\sigma$, respectively (i.e. the image in $K$ of the first and last vertices of $\Delta^n \to K$ under the identification \ref{Yoneda}). We write $|\Delta^n|$ for the topological $n$-simplex $\{ (t_1,...,t_n) \in \mathbb{R}^n : 0 \leq t_1 \leq ... \leq t_n \leq 1\} \subset \mathbb{R}^n$ and $|K|$ for the topological space obtained as the geometric realization of $K$, i.e. $|K| = \text{colim }_{\Delta^n \to K} |\Delta^n|$. 

\subsection{Necklaces}
A \textit{necklace} is a simplicial set of the form $T=B_1 \vee ... \vee B_k$ where each $B_i=\Delta^{n_i}$ is a standard $n$-simplex with $n_i \geq 1$ and the wedges mean that vertices $\text{max}(B_i)$ and $\text{min}(B_{i+1})$ are identified for $i=1,...,k-1$. Each $B_i$ is called a \textit{bead} of $T$. Define the dimension of $T$ by $\text{dim}(T)=n_1 + ... +n_k-k$. Denote by $\alpha_T$ and $\omega_T$ the first and last vertices of $T$. Necklaces form a category $Nec$ in which objects are nekclaces and morphisms are maps of simplicial sets which preserve the first and last vertices. A set of generators for morphisms in $Nec$ is described in \cite{RiZe17}. A map of simplicial sets $t: T \to K$ such that $ T \in Nec$ and $t(\alpha_T)=x$ and $t(\omega_T)=y$ is called a \textit{necklace in $K$ from $x$ to $y$}; these form a category $(Nec \downarrow K)_{x,y}$ with morphisms given by maps $ T \to S \in Nec$ forming commutative triangles. 

Later on we will see that necklaces $t: T \to K \in (Nec \downarrow K)_{x,y}$ label the cubical cells of a cubical set. A cell labeled by $t: T\to K$ will have dimension $\text{dim}(T)$; this motivates the definition of the dimension of a necklace.

\subsection{Cubical sets with connections}
In sections 3, and 4 we will use a construction of \cite{RiZe17}, the cubical rigidification functor, which is based on the notion of cubical sets with connections. For this reason we recall the definition of cubical sets with connections which were originally introduced in \cite{BrHi81}. For any integer $n \geq 1$ let $\mathbf{1}^n$ be the cartesian product of $n$ copies of the category $\mathbf{1}=\{0,1\}$ with two objects and one non-identity morphism and let $\mathbf{1}^0$  be the category with one object and one morphism. Define a category $\square_c$ with objects $\{ \mathbf{1}^0, \mathbf{1}^1, \mathbf{1}^2, ... \}$ and morphisms generated by the following functors:
\\
\textit{cubical co-face functors} $\delta^{\epsilon}_{j,n}: \mathbf{1}^n \to \mathbf{1}^{n+1}$, where $j=0,1,...,n+1$, and $\epsilon \in \{0,1\}$, defined by
\begin{eqnarray*}
\delta^{\epsilon}_{j,n}(s_1,...,s_n)=(s_1,...,s_{j-1},\epsilon,s_j,...,s_n),
\end{eqnarray*}
\textit{cubical co-degeneracy functors} $\varepsilon_{j,n}: \mathbf{1}^n \to \mathbf{1}^{n-1}$, where $j=1,...,n$, defined by
\begin{eqnarray*}
\varepsilon_{j,n}(s_1,...,s_n)=(s_1,...,s_{j-1},s_{j+1},...,s_n), \text{ and }
\end{eqnarray*}
\textit{cubical co-connection functors} $\gamma_{j,n}: \mathbf{1}^n \to \mathbf{1}^{n-1}$, where $j=1,...,n-1$, $n\geq 2$, defined by
\begin{eqnarray*}
\gamma_{j,n}(s_1,...,s_n)=(s_1,...,s_{j-1},\text{max}(s_j,s_{j+1}),s_{j+2},...,s_n).
\end{eqnarray*}
\\
A \textit{cubical set with connections} is a functor $Q: \square_c^{op} \to Set$. Denote $\partial_{j}^{\epsilon}:=Q(\delta^{\epsilon}_{j,n}): Q_{n+1} \to Q_n$. Let $Set_{\square_c}$ be the cateogry whose objects are cubical sets with connections and morphisms are natural transformations. The \textit{standard n-cube with connections} $\square_c^n$ is the functor $\square_c^{op} \to Set$ represented by $\mathbf{1}^n$, namely, $\text{Hom}_{\square_c}( \text{ - }, \mathbf{1}^n): \square_c^{op} \to Set$.  The category $Set_{\square_c}$ has a (non-symmetric) monoidal structure defined by 
\begin{eqnarray*}
K \otimes L:= \underset{\sigma: \square_c^n \to K, \tau: \square_c^m \to L} {\text{colim }} \square^{n+m}_c.
\end{eqnarray*}
Let $Cat_{\square_c}$ denote the category of categories enriched over cubical sets with connections. 

Cubical sets with connections will be appear in the following context: given a simplicial set $K$ with two vertices $x$ and $y$ we will describe a natural cubical set with connections $\mathfrak{C}_{\square}(K)(x,y)$ whose cubical $n$-cells  are labeled by objects $(t: T \to K) \in (Nec \downarrow K)_{x,y}$ with $\text{dim}(T)=n$. This cubical sets with connections models the space of paths in $|K|$ from $x$ to $y$. 

\subsection{Enriched categories and weak equivalences}
In this article we will consider several enriched categories with particular notions of weak equivalences which we now recall following \cite{Lu09} and \cite{Lu11}. 

Let $\text{Ch}_{\mathbf{k}}$ denote the (ordinary) category whose objects are chain complexes over $\mathbf{k}$ which are bounded below and morphisms are chain maps. The category $\text{Ch}_{\mathbf{k}}$ has a symmetric monoidal structure given by the tensor product of complexes. Let $dgCat_{\mathbf{k}}$ be the category of categories enriched over $\text{Ch}_{\mathbf{k}}$. Objects of $dgCat_{\mathbf{k}}$ are called \textit{dg categories}. For any dg category $\mathcal{C}$ we dentote by $\mathcal{C}(x,y)$ the $\mathbf{k}$-chain complex of morphisms between $x$ and $y$. There is an ordinary category associated functorially to $\mathcal{C}$ called the \textit{homotopy category} of $\mathcal{C}$ and denoted by $ho(\mathcal{C})$. The objects of $ho(\mathcal{C})$ are the objects of $\mathcal{C}$, morphisms are given by the $0^\text{th}$ homology groups of the mapping spaces of $\mathcal{C}$, i.e.  $ho(\mathcal{C})(x,y)= H_0(\mathcal{C}(x,y))$, and composition is induced by composition in $\mathcal{C}$. A \textit{weak equivalence of dg categories} is a dg functor $F: \mathcal{C} \to \mathcal{D} \in dgCat_{\mathbf{k}}(\mathcal{C}, \mathcal{D})$ which induces an equivalence of ordinary categories $ho(\mathcal{C}) \to ho(\mathcal{D})$ and for every pair of objects $x,y \in \mathcal{C}$, the induced map $\mathcal{C}(x,y) \to \mathcal{C}(F(x), F(y))$ is a quasi-isomorphism of chain complexes. The dg category of chain complexes, denoted by $Ch_{\mathbf{k}}$ (as opposed to $\text{Ch}_{\mathbf{k}}$) has the same objects as $\text{Ch}_{\mathbf{k}}$ and the $\mathbf{k}$-chain complex $Ch_{\mathbf{k}}((C,d_C),(D, d_D))$ is the graded $\mathbf{k}$-module generated by graded maps $f:C \to D$ with differential
\[\delta(f):= f \circ d_C - (-1)^n d_D \circ f\] 
for any $(f: C \to D ) \in Ch_{\mathbf{k}}((C,d_C),(D,d_D))_n$. 

The category $dgCat_{\mathbf{k}}$ has a symmetric monoidal structure induced by taking the cartesian product of collections of objects and tensor product of chain complexes at the level of morphisms. 

Let $Cat_{\Delta}$ be the category of categories enriched over the symmetric monoidal category of simplicial sets with cartesian product. Objects of $Cat_{\Delta}$ are called \textit{simplicial categories}. To any simplicial category $\mathcal{C}$ we may associate functorially an ordinary category $ho(\mathcal{C})$ also called the \textit{homotopy category}. It is defined similarly to the homotopy category of a dg category replacing the $0^\text{th}$ homology group with the $0^\text{th}$ homotopy group of a simplicial set. A weak equivalence is also defined similarly replacing quasi-isomorphisms by (Kan) weak equivalence of simplicial sets.

Throughout the article we use the \textit{Koszul sign rule}: whenever $x$ moves past $y$, the sign change of $(-1)^{|x||y|}$ occurs.

\subsection{Kan complexes and quasi-categories}
Recall the $i$-th horn in $\Delta^n$ is the subsimplicial set $\Lambda^n_i \subset \Delta^n$ obtained from $\Delta^n$ by deleting the interior and the face opposite to the $i$-th vertex. A \textit{Kan complex} $K$ is a simplicial set with the following \textit{horn filling} condition:
for any $0\leq i \leq n$, any map $f: \Lambda^n_i \to K$ can be filled, namely, there exists a map  $\overline{f}: \Delta^n \to K$ such that $\overline{f} \circ i= f$ where $i: \Lambda^n_i \hookrightarrow \Delta^n$ is the inclusion map. 
An example of a Kan complex is the singular complex $\text{Sing}(X)$ of a topological space $X$: the simplicial set having as $n$-simplices the set of all continuous maps $|\Delta^n| \to X$ with structure maps induced by those of $\Delta^n$. 
If $b \in X$ then we can form another Kan complex $\text{Sing}(X,b) \subset \text{Sing}(X)$ whose $n$-simplices consist of all continuous maps $|\Delta^n| \to X$ sending all vertices of $|\Delta^n|$ to $b$. If $X$ is path-connected then the inclusion $\text{Sing}(X,b) \hookrightarrow \text{Sing}(X)$ is a weak equivalence of Kan complexes, i.e. it induces a homotopy equivalence of spaces $|\text{Sing}(X,b)| \simeq |\text{Sing}(X)|$.

A \textit{quasi-category} $\mathcal{C}$ is a simplicial set with the above horn filling condition being required only for $0 < i < n$. Vertices in a quasi-category are sometimes called \textit{objects}. An example of a quasi-category is the nerve of a category. Technically, the nerve of a large category is a large quasi-category: the objects might not form a set. Thus quasi-categories may be large. The nerve functor $N: Cat \to Set_{\Delta}$ has a left adjoint $ho: Set_{\Delta} \to Cat$. The \textit{homotopy category} of a quasi-category $\mathcal{C}$ is the ordinary category $ho(\mathcal{C})$. 

For any simplicial set $K$ and any quasi-category $\mathcal{C}$, the simplicial set of simplicial set morphisms (the internal hom in $Set_{\Delta}$), denoted by $\text{Fun}(K, \mathcal{C})$, is again a quasi-category. 

Given two objects $a$ and $b$ in $\mathcal{C}$ define a simplicial set $\text{Hom}^R_{\mathcal{C}}(a,b)$, called the \textit{right hom space between $a$ and $b$}, whose $n$-simplices are
\begin{equation}
\text{Hom}^R_{\mathcal{C}}(x,y)_n= \{ f: J^n \to \mathcal{C} \in Set_{\Delta}(J^n, \mathcal{C}) : f(x)=a, f(y)=b\}
\end{equation}
where $J^n$ is the simplicial set with two vertices $x$ and $y$ obtained as the quotient of $\Delta^{n+1}$ by collapsing the face $ \Delta^{ \{0,1,...,n\} }$ to a vertex $x$ and then calling $y$ the vertex opposite to the collapsed face.
If $\mathcal{C}$ is a quasi-category then $\text{Hom}^R_{\mathcal{C}}(x,y)$ is a Kan complex (Proposition 1.2.2.3 of \cite{Lu09}). A \textit{weak equivalence of quasi-categories} $\theta: \mathcal{C} \to \mathcal{D}$ is a map of simplicial sets inducing an essentially surjective functor between homotopy categories and for any $a,b \in \mathcal{C}_0$ a weak equivalence of Kan complexes $\theta: \text{Hom}^R_{\mathcal{C}}(a,b) \to \text{Hom}^R_{\mathcal{D}}(\theta(a),\theta(b))$. Quasi-categories model $\infty$-categories as studied extensively in \cite{Lu09}. 

\subsection{Algebras and coalgebras}

Let $\mathbf{k}$ be a commutative ring. A \textit{differential graded associative algebra} (dg algebra) is a graded $\mathbf{k}$-module with a differential $d: A \to A$ of degree $-1$ which squares zero together with an associative product $\cdot: A \otimes A \to A$ of degree $0$ which is compatible with the differential, i.e. $\cdot$ is a chain map when $A \otimes A$ is equipped with the tensor differential. A dg algebra is \textit{unital} if there is a map $u: \mathbf{k} \to A$ of dg algebras such that $\cdot \circ ( u \otimes id) = id= \cdot \circ (id \otimes u)$ and it is \textit{augmented} if equipped with a map of unital dg algebras $\mu: A \to \mathbf{k}$ where $\mathbf{k}$ is considered as a dg algebra concentrated at zero with zero differential. Denote $\overline{A}= \text{ker }~\mu$. 

All the dg algebras consider in the text will be assumed to be non-negatively graded. 

Dually, a \textit{differential graded coassociative coalgebra} (dg coalgebra) is a graded $\mathbf{k}$-module  $C$ with a differential $d:A \to A$ of degree $-1$ which squares zero together with a coassociative coproduct $\Delta: C \to C\otimes C$ of degree $0$ which is compatible with the differential, i.e. $\Delta$ is a chain map when $C \otimes C$ is equipped with the tensor differential. A dg coalgebra is \textit{counital} if there is a map of dg coalgebras $\epsilon: C \to \mathbf{k}$ such that $(\epsilon \otimes id) \circ \Delta= id = (id \otimes \epsilon) \circ \Delta$ and it is \textit{coaugmented} if its equipped with a map of unital dg coalgebras $\nu: \mathbf{k} \to C$. For a counital coaugmented dg coalgebra $C$ we denote $\overline{C}=\text{coker}(\nu) \cong \text{ker } (\epsilon)$, and so we have an isomorphism $C \cong \text{ker }(\epsilon) \oplus \mathbf{k} 1$. The \textit{reduced coproduct} $\Delta': \overline{C} \to \overline{C} \otimes \overline{C}$ is defined by $\Delta'(x):= \Delta(x) - x \otimes 1 - 1 \otimes x$. We call $C$ \textit{conilpotent} if for all $x \in \overline{C}$ there is some integer $n_x \geq 0$ such that $(\Delta')^{n_x}(x)=0$. If $C$ is connected, i.e. $C_0=\mathbf{k}$, then $C$ is conilpotent. The following two examples of dg coalgebras will be relevant in the following sections.
\begin{example} Associated to any simplicial set $K$ there is a natural dg coalgebra $(C^N_*(K), \partial, \Delta)$ of normalized simplicial chains with the Alexander-Whitney coproduct $\Delta: C^N_*(K) \to C^N_*(K) \otimes C^N_*(K)$. Recall $C^N_*(K)$ is defined as the free graded $\mathbf{k}$-module generated by simplices in $K$  modulo degenerate simplices, $\partial: C^N_n(K) \to C^N_{n-1}(K)$ is induced by $\partial(\sigma)= \sum_{i=0}^n (-1)^iK(d_i)(\sigma)$ and $\Delta: C^N_*(K) \to C^N_*(K) \otimes C^N_*(K)$ by
\[ \Delta(\sigma)= \bigoplus_{p+q=n} K(f_p)(\sigma) \otimes K(l_q)(\sigma),\]
where $f_p: [p] \to [p+q]$ is $f_p(i)=i$ and $l_q: [q] \to [p+q]$ is $l_q(i)=p+i$. 
\end{example}

\begin{example} Associated to any cubical set with connections $Q$ there is a natural dg coalgebra $(C^{\square}_*(Q), \partial, \Delta_{\square})$ of normalized cubical chains with Serre diagonal $\Delta_{\square}$. Let $\tilde{C}^{\square}_*(Q)$ be the chain complex such that $\tilde{C}^{\square}_n(Q)$ is the free $\mathbf{k}$-module generated by elements of $Q_n$ with differential defined on $\sigma \in Q_n$ by $\partial (\sigma):= \sum_{j=1}^n(-1)^{j}(\partial^1_{j}(\sigma) - \partial^0_{j}(\sigma))$. Let $D_nQ$ be the submodule of $\tilde{C}^{\square}_n(Q)$ which is generated by those cells in $Q_n$ which are the image of a degeneracy or of a connection map $Q_{n-1} \to Q_n$. The graded module $D_*Q$ forms a subcomplex of $\tilde{C}^{\square}_*(Q)$. Define $C^{\square}_*(Q)$ to be the quotient chain complex $\tilde{C}^{\square}_*(Q) /D_*(Q)$. The \textit{Serre diagonal} map $\Delta_{\square}: C^{\square}_*(Q) \to C^{\square}_*(Q) \otimes C^{\square}_*(Q)$ is induced by 
\begin{equation*}
\Delta_{\square}(\sigma) = \sum (-1)^{\epsilon} \partial_{j_1}^0 ... \partial^0_{j_p} (\sigma) \otimes \partial^1_{i_1} ... \partial^1_{i_q}(\sigma)
\end{equation*}
where $\sigma \in Q_n$, the sum runs through all shuffles $\{ i_1 < ... < i_q, j_1 < ... < j_p \}$ of $\{1, ..., n\}$ and $(-1)^{\epsilon}$ is the shuffle sign. The coproduct $\Delta_{\square}$ was originally introduced by Serre and the name Serre diagonal was given in \cite{KaSa05}, where the coproduct $\Delta_{\square}$ and its properties are described. 
\end{example} 

We recall the definition of the  \textit{bar construction}. For any augmented dg algebra $(A, d_A, \cdot, \mu: A \to \mathbf{k})$ define a conilpotent dg coalgebra $(BA, d_{BA}, \Delta, \nu)$ as follows. 
\begin{equation*}
BA= \mathbf{k} \oplus \left(s \overline{A}\,\right) \oplus \left(s \overline{A}\,\right)^{\otimes 2} \oplus \left(s \overline{A}\,\right)^{\otimes 3} \oplus \cdots ,
\end{equation*}
where $s$ denotes the shift by $+1$ functor and we will write monomials in $BA$ by $\{a_1 | ... | a_k\}$ where $a_i \in s\overline{A}$. Set $d_{BA}:= -d_1 + d_2$ where
\[d_{1}\{ a_{1}|...| a_{n} \}   =\sum_{i=1}^{n} (-1)^{\epsilon_{i-1}}
\{ a_{1}|...|d_{A}(a_{i}) |...| a_{n}\},\]
\[d_{2} \{  a_{1}|...| a_{n}\}= \sum_{i=1}^{n-1} (-1)^{\epsilon_{i}}
\{ a_{1}|...| a_{i} \cdot a_{i+1} |...| a_{n}\},
\]
where $\epsilon_j= |a_1| + ... |a_i| - j+1$. The coproduct $\Delta: BA \to BA \otimes BA$ is given by deconcatenation of monomials, namely
\[ \Delta( \{ a_1 | ... | a_n \} ) = \sum_{i=0}^{n} \{a_1 | ... |a_i\} \otimes \{ a_{i+1} |...| a_n\}. \] The coaugmentation $\nu: \mathbf{k} \to BA$ is the inclusion into the first direct sum term in $BA$. The bar construction defines a functor $B$ from the category of augmented dg algebras to the category of conilpotent dg coalgebras.

There is also a functor $\Omega$ from the category of conilpotent dg coalgebras to the category of augmented dg algebras called the \textit{cobar construction} defined as follows. For any conilpotent dg coalgebra $(C, d_C, \Delta, \nu: \mathbf{k} \to C)$ define an augmented dg algebra $(\Omega C, d_{\Omega C}, \cdot, \mu)$ by letting
\[
\Omega C= \mathbf{k} \oplus s^{-1}\overline{C} \oplus (s^{-1}\overline{C} \otimes s^{-1}\overline{C}) \oplus (s^{-1}\overline{C} \otimes s^{-1}\overline{C} \otimes s^{-1}\overline{C}) \oplus ...
\] 
where $s^{-1}$ is the shift by $-1$ functor and the product $\cdot$ given by concatenation of monomials. We write monomials in $\Omega C$ as $[c_1 | ... |c_n]$ where $c_i \in s^{-1} \overline{C}$. The differential $d_{\Omega C}$ is defined by extending extending $d_{\Omega C}  =- d_C + \Delta' $ as a derivation to a map $d_{\Omega C} : \Omega C \to \Omega C$. The augmentation $\mu: \Omega C \to \mathbf{k}$ is the projection to the first direct sum factor of $\Omega C$. 

For any conilpotent dg coalgebra $C$ there is a quasi-isomorphism of dg coalgebras $\rho: C \to B \Omega C$ and for any augmented dg algebra $A$ there is quasi-isomorphism of dg algebras $\pi: \Omega B A \to A$ \cite{HuMoSt74}. 

We also have a linear map $\tau: C \rightarrowtail \overline{C} \cong s^{-1} \overline{C} \hookrightarrow \Omega C$ of degree $-1$ called the \textit{universal twisting cochain}.  We use this map in the following construction which will appear in section 4.3.  For any right dg $\Omega C$-module $(M,d_M)$ define $\partial_{\otimes \tau}: M \otimes C \to M \otimes C$ by 
\begin{equation}\label{twisteddiff}
\partial_{\otimes \tau} ( m \otimes c)= d_Mm \otimes c + (-1)^{|m|}m \otimes d_Cc + \sum_{(c)}(-1)^{|m|}( m \cdot \tau(c') ) \otimes c''
\end{equation}
where $\Delta(c)=\sum_{(c)} c' \otimes c''$. It follows that $\partial_{\otimes \tau} \circ \partial_{\otimes \tau}=0$, so $(M \otimes C, \partial_{\otimes_\tau})$ is a chain complex called the \textit{twisted tensor product} of the dg $\Omega C$-module $M$ and the dg coalgebra $C$. Moreover, the map $id \otimes \Delta: M \otimes C \to M \otimes C \otimes C$ defines a right dg $C$-comodule structure on $( M \otimes C, \partial_{\otimes_\tau} )$. The twisted tensor product construction was originally introduced in \cite{Br59} to model the singular chain complex of the total space of a fibration in terms of the chains in the base and the chains in the fiber. This algebraic construction can be generalized for any pair of a dg coalgebra $C$ and dg algebra $A$ equipped with a degree $-1$ map $\tau: C \to A$ satisfying certain Maurer-Cartan equation.

\section{Simplicial and cubical rigidification of quasi-categories}

We go over the construction of the \textit{simplicial rigidification} functor $\mathfrak{C}: Set_{\Delta} \to Cat_{\Delta}$ as described in \cite{Lu09}, review how necklaces were used in \cite{DuSp11} to describe its mapping spaces, and recall the definition of the cubical rigidification functor $\mathfrak{C}_{\square_c}: Set_{\Delta} \to Cat_{\square_c}$ as introduced in \cite{RiZe17}. 

For any integers $0 \leq  i < j$ let $P_{i,j}$ be the category whose objects are subsets of the set $\{i, i+1, ..., j\}$ containing both $i$ and $j$ and morphisms are inclusions of sets.

\begin{definition} Define a functor $\mathfrak{C}: Set_{\Delta} \to Cat_{\Delta}$ by first defining a simplicial category $\mathfrak{C}(\Delta^n)$ for all $n \geq 0$ as follows. The objects of $\mathfrak{C}(\Delta^n)$ are the elements of the set $\{0, ... , n\}$ and for any two objects $i$ and $j$ with $i \leq j$, $\mathfrak{C}(\Delta^n)(i,j)$ is the simplicial set $N(P_{i,j})$, the nerve of the category $P_{i,j}$. If $j < i$, $\mathfrak{C}(\Delta^n)(i,j)= \emptyset$. Composition of morphisms in $\mathfrak{C}(\Delta^n)$ is induced by the functor $P_{j,k} \times P_{i,k} \to P_{i,k}$ induced by taking the union of two sets. The construction of the simplicial category $\mathfrak{C}(\Delta^n)$ is functorial with respect to maps $[n] \to [m]$ in $\mathbf{\Delta}$ so we may define $\mathfrak{C}: Set_{\Delta} \to Cat_{\Delta}$  by 
\begin{equation*}
\mathfrak{C}(K):= \text{colim }_{\Delta^n \to K} \mathfrak{C}(\Delta^n). 
\end{equation*}
We refer to $\mathfrak{C}: Set_{\Delta} \to Cat_{\Delta}$ as the \textit{simplicial rigidification functor}. 
\end{definition}

\begin{remark} The mapping spaces of $\mathfrak{C}$ are described explicitly in \cite{DuSp11} in terms of necklaces. More precisely, for any simplicial set $K$ and any two $x,y \in K_0$, there is an isomorphism of simplicial sets
\begin{equation}
\mathfrak{C}(K)(x,y) \cong \underset{T \to K \in (Nec \downarrow K)_{x,y}}{\text{colim }} [ \mathfrak{C}(T)(\alpha_T, \omega_T)].
\end{equation}
Moreover, for any necklace $T \in Nec$, the simplicial set $\mathfrak{C}(T)(\alpha_T, \omega_T)$ is isomorphic to a simplicial $N$-cube $(\Delta^1)^{\times N}$ where $N=\text{dim} T$. It is shown in \cite{Lu09} that if $\mathcal{C}$ is a quasi-category and $x,y \in \mathcal{C}_0$ then $\mathfrak{C}(\mathcal{C})(x,y)$ is a Kan complex homotopy equivalent to $Hom^{R}_{\mathcal{C} }(x,y)$. 
\end{remark}

We now review the construction of the cubical version of $\mathfrak{C}$ introduced in \cite{RiZe17}. In section 4 of \cite{RiZe17}, we describe a functor $C_{\square_c}: Nec \to Set_{\square_c}$ such that $C_{\square_c}(T) \cong \square_c^{\text{dim}(T)}$ for any $T \in Nec$. The connections are needed to obtain functoriality, since certain maps in the category $Nec$ are not realized by maps of (classical) cubical sets but are realized by maps of cubical sets with connections.

\begin{definition}
 For any simplicial set $K$ define $\mathfrak{C}_{\square_c}(K)$ to be the category enriched over cubical sets with connections whose objects are given by $K_0$, mapping spaces by

\begin{equation} 
\mathfrak{C}_{\square_c}(K)(x,y):= \underset{T \to K \in (Nec \downarrow K)_{x,y}}{\text{colim }} [ C_{\square_c}(T)(\alpha_T, \omega_T)],
\end{equation}
and composition induced by wedging of necklaces. This construction defines a functor $\mathfrak{C}_{\square}: Set_{\Delta} \to Cat_{\square_c}$ called the \textit{cubical rigidification functor}. 
\end{definition} 

The functor $\mathfrak{C}$ factors through $\mathfrak{C}_{\square_c}$ as explained in Proposition 5.3 in \cite{RiZe17}, which we recall below.

\begin{proposition}\label{triang} The rigidification functor $\mathfrak{C}: Set_{\Delta} \to Cat_{\Delta}$ is naturally isomorphic to the composition of functors
\begin{equation*}
Set_{\Delta} \xrightarrow{\mathfrak{C}_{\square_c}} Cat_{\square_c} \xrightarrow{\mathfrak{T}} Cat_{\Delta},
\end{equation*}
where $\mathfrak{T}$ is defined by applying the triangulation functor $\mathcal{T}: Set_{\square_c} \to Set_{\Delta}$ on mapping spaces. 
\end{proposition}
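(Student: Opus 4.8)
The plan is to compare the two functors one mapping space at a time, using that both $\mathfrak{C}(K)(x,y)$ and $\mathfrak{C}_{\square_c}(K)(x,y)$ are built as colimits indexed by necklaces in $K$ and that the triangulation functor $\mathcal{T}\colon Set_{\square_c}\to Set_{\Delta}$ preserves colimits. First one checks that $\mathfrak{T}\colon Cat_{\square_c}\to Cat_{\Delta}$ is well defined: since $\mathcal{T}$ is colimit preserving with $\mathcal{T}(\square_c^n)\cong(\Delta^1)^{\times n}$, and since cartesian products in $Set_{\Delta}$ commute with colimits in each variable, one obtains a natural isomorphism $\mathcal{T}(Q\otimes Q')\cong\mathcal{T}(Q)\times\mathcal{T}(Q')$, so $\mathcal{T}$ is strong monoidal and carries categories enriched over cubical sets with connections to simplicial categories. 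Since $\mathfrak{C}(K)$ and $\mathfrak{T}(\mathfrak{C}_{\square_c}(K))$ both have object set $K_0$ and both constructions are functorial in $K$, it suffices to produce, naturally in $K$, an isomorphism of mapping spaces compatible with composition.

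Fix $x,y\in K_0$. By the necklace presentation of $\mathfrak{C}$ recalled above and the definition of $\mathfrak{C}_{\square_c}$, one has $\mathfrak{C}(K)(x,y)\cong\text{colim}_{(T\to K)\in(Nec\downarrow K)_{x,y}}\mathfrak{C}(T)(\alpha_T,\omega_T)$ and $\mathfrak{C}_{\square_c}(K)(x,y)=\text{colim}_{(T\to K)\in(Nec\downarrow K)_{x,y}}C_{\square_c}(T)(\alpha_T,\omega_T)$. Applying the colimit-preserving functor $\mathcal{T}$ to the latter gives
\[
\mathfrak{T}(\mathfrak{C}_{\square_c}(K))(x,y)\;\cong\;\underset{(T\to K)\in(Nec\downarrow K)_{x,y}}{\text{colim}}\;\mathcal{T}\bigl(C_{\square_c}(T)(\alpha_T,\omega_T)\bigr).
\]
Hence it is enough to construct a natural isomorphism of functors $Nec\to Set_{\Delta}$ between $T\mapsto\mathfrak{C}(T)(\alpha_T,\omega_T)$ and $T\mapsto\mathcal{T}\bigl(C_{\square_c}(T)(\alpha_T,\omega_T)\bigr)$ that intertwines the wedge maps inducing composition on both sides; the proposition then follows by passing to the colimit over $(Nec\downarrow K)_{x,y}$, with naturality in $K$ being automatic.

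On a single bead $B=\Delta^n$ one has $\mathfrak{C}(\Delta^n)(0,n)=N(P_{0,n})$, and $P_{0,n}$ is isomorphic to the Boolean lattice on $\{1,\dots,n-1\}$, so $N(P_{0,n})\cong(\Delta^1)^{\times(n-1)}$; on the cubical side $C_{\square_c}(\Delta^n)(0,n)$ is the standard cube $\square_c^{n-1}$, whose triangulation $\mathcal{T}(\square_c^{n-1})$ is likewise $(\Delta^1)^{\times(n-1)}$. For a general necklace $T=B_1\vee\cdots\vee B_k$, the mapping space $\mathfrak{C}(T)(\alpha_T,\omega_T)$ splits, because $\mathfrak{C}$ preserves the pushouts expressing $T$ as a wedge of its beads, as the product $\prod_i\mathfrak{C}(B_i)(\min B_i,\max B_i)$, while $C_{\square_c}(T)(\alpha_T,\omega_T)\cong\square_c^{\dim T}$ decomposes correspondingly as an iterated $\otimes$-product of the $C_{\square_c}(B_i)(\min B_i,\max B_i)$. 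Applying the monoidal structure on $\mathcal{T}$ together with the bead-wise identifications then yields a bijection $\mathfrak{C}(T)(\alpha_T,\omega_T)\cong\mathcal{T}\bigl(C_{\square_c}(T)(\alpha_T,\omega_T)\bigr)$, both sides being the simplicial cube $(\Delta^1)^{\times\dim T}$.

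The hard part is to verify that these bead-wise bijections are natural with respect to the morphisms of $Nec$ and compatible with wedging, which is precisely the reason that $C_{\square_c}$ is built from cubical sets \emph{with connections} rather than ordinary cubical sets. Concretely, one must check on a set of generators of $Nec$ (coface-type inclusions of beads, wedge inclusions, and the bead-collapsing maps, the last of which is realized cubically only once connection maps are available) that triangulating the corresponding cubical structure maps of $C_{\square_c}$ reproduces, under the identifications above, the simplicial maps that appear in the necklace colimit formula for $\mathfrak{C}$. This is a finite but somewhat intricate combinatorial verification, and it carries essentially all the content of the statement; the colimit and monoidality steps are formal. The details of this verification are the substance of the proof of Proposition 5.3 in \cite{RiZe17}.
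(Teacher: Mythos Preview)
Your proposal is correct as an outline, but note that the present paper does not actually prove this proposition: it merely recalls it, stating just before the proposition that this ``is explained in Proposition 5.3 in \cite{RiZe17}, which we recall below.'' There is no proof in the paper to compare against; the result is imported wholesale from \cite{RiZe17}.

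That said, your sketch accurately reconstructs the architecture of the argument one finds in \cite{RiZe17}: reduce to a natural isomorphism of $Nec$-diagrams in $Set_{\Delta}$ using that both mapping spaces are colimits over $(Nec\downarrow K)_{x,y}$ and that $\mathcal{T}$ is colimit-preserving and strong monoidal; identify both sides on a necklace $T$ with $(\Delta^1)^{\times \dim T}$; and then check compatibility on a generating set of morphisms in $Nec$, which is exactly where the connection maps become essential. You are right that this last verification is the real content and that the rest is formal. Since you already cite \cite{RiZe17} for those details, your write-up is in the same spirit as the paper's treatment, only more expansive.
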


\section{The dg nerve functor and its left adjoint}

We define the dg nerve functor $N_{dg}: dgCat_{\mathbf{k}} \to Set_{\Delta}$ by first defining its left adjoint  $\Lambda: Set_{\Delta} \to dgCat_{\mathbf{k}}$ and then discuss some of its properties. 
We start with some notation. For any simplicial set $K \in Set_{\Delta}$ let $(C_*(K), \partial, \Delta)$ be the dg coalgebra of simplicial chains on $K$ over $\mathbf{k}$ with Alexander-Whitney coproduct $\Delta: C_*(K) \to C_*(K) \otimes C_*(K)$. Consider the the map $\partial'(x)= \sum_{i=1}^{n-1} (-1)^iK(d_i)$ obtained by dropping first and last terms in the definition of $\partial$.  The differential $\partial'$ and the coproduct $\Delta'$ (as defined in  section 2.6) define differential graded coassociative coalgebra structures on $C_*(K)$ and on the shifted graded module $s^{-1}C_{*>0}(K)$.

\begin{definition} Let $K$ be a simplicial set. Define $\Lambda(K)$ to be the dg category whose objects are the elements of $K_0$ and for any two $x,y \in K_0$ we define a chain complex
$(\Lambda(K)(x,y), d_{\Lambda})$ as follows. As a $\mathbf{k}$-module, $\Lambda(K)(x,y)$ is the quotient of the free $\mathbf{k}$-module generated by monomials $(\sigma_1 | ... | \sigma_k)$, where each $\sigma_i$ is a generator of  $s^{-1}C_{*>0}( K)$ satisfying $\text{max} \sigma_i = \text{min} \sigma_{i+1}$, by the equivalence relation generated by
\begin{equation*}
 (\sigma_1 |... | \sigma_k )\sim (\sigma_1 | ... |\sigma_{i-1}  | \sigma_{i+1} | ... |\sigma_k)
 \end{equation*}
if $k\geq 2$ and $\sigma_i$ is a degenerate $1$-simplex for some $1 \leq i \leq k$; and
 \begin{equation*}
 (\sigma_1 | ... | \sigma_k )\sim 0
 \end{equation*}
 if $k \geq 1$ and $\sigma_i \in C_{n_i}(K)$ is a degenerate simplex for some $1 \leq i \leq k$ and $n_i \geq 2$. Denote by $[\sigma_1 | ... | \sigma_k]$ the equivalence class of $(\sigma_1 | ... | \sigma_k)$. Composition is given by concatenation of monomials, namely, 
 $[\sigma_1 | ... | \sigma_k] \in \Lambda(K)(x,y)$ and $[\gamma_1 | ... | \gamma_k] \in \Lambda(K)(y,z)$ then $[\gamma_1 | ... | \gamma_k]  \circ [\sigma_1 | ... | \sigma_k] = [\sigma_1 | ... | \sigma_k | \gamma_1 | ... | \gamma_k] \in \Lambda(K)(x,z)$. The differential $d_{\Lambda}: \Lambda(K)(x,y) \to \Lambda(K)(x,y) $ is defined by extending  $d_{\Lambda}= -\partial' + \Delta'$ as a derivation on monomials, similarly to how the differential of the cobar construction is defined. The map $d_{\Lambda}$ is well defined on equivalence classes and $d_{\Lambda} \circ d_{\Lambda}=0$. This construction is clearly functorial on $K$ and thus defines a functor $$\Lambda: Set_{\Delta} \to dgCat_{\mathbf{k}}.$$
 \end{definition}
 
 \begin{remark} Note that a monomial $(\sigma_1 | ... | \sigma_k)$ where $\sigma_i \in K_{n_i}$ and $\text{min} \sigma_1=x$, $\text{max} \sigma_i = \text{min} \sigma_{i+1}$ for $i=1,...,k-1$, and $\text{max}\sigma_k=y$ corresponds to a necklace $t: \Delta^{n_1} \vee ... \vee \Delta^{n_k} \to (Nec \downarrow K)_{x,y}$. 
\end{remark} 

\begin{definition}
The \textit{dg nerve} functor $N_{dg}: dgCat_{\mathbf{k}} \to Set_{\Delta}$ is defined by setting
\begin{equation*}
N_{dg}(\mathcal{C})_n:= Hom_{dgCat_{\mathbf{k}}}(\Lambda(\Delta^n), \mathcal{C})
\end{equation*}
for any dg category $\mathcal{C}$. 
\end{definition}

\begin{remark} It follows from Theorem 6.1 of \cite{RiZe17} that the above definition of $N_{dg}$ agrees with Lurie's definition of the dg nerve functor in \cite{Lu11}. Hence $\Lambda: Set_{\Delta} \to dgCat_{\mathbf{k}}$ is the left adjoint for Lurie's dg nerve functor.  Moreover, it is shown in \cite{Lu11} that the simplicial set $N_{dg}(\mathcal{C})$ is a quasi-category.
\end{remark}

It follows from \cite{RiZe17} that $\Lambda$ is weakly equivalent to taking chains on the mapping spaces of $\mathfrak{C}$, as we recall next.

\begin{proposition} \label{LandC} For any $K \in Set_{\Delta}$ there is a natural weak equivalence of dg categories $\Lambda(K) \simeq (\mathfrak{Q} \circ \mathfrak{C}) (K)$ where $\mathfrak{C}: Set_{\Delta} \to Cat_{\Delta}$ is the rigidification functor and $\mathfrak{Q}: Cat_{\Delta} \to dgCat_{\mathbf{k}}$ is the functor which applies the simplicial chains functor on the mapping spaces of a simplicial category. 
\end{proposition}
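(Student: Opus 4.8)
The plan is to route the comparison through the cubical rigidification functor $\mathfrak{C}_{\square_c}$ of \cite{RiZe17} and the triangulation functor $\mathcal{T}$. The key intermediate claim is that there is a natural isomorphism of dg categories
\[ \Lambda(K) \;\cong\; (\mathfrak{Q}_{\square} \circ \mathfrak{C}_{\square_c})(K), \]
where $\mathfrak{Q}_{\square}: Cat_{\square_c} \to dgCat_{\mathbf{k}}$ is the functor that applies the normalized cubical chains functor $C^{\square}_*$ to mapping spaces. Granting this, one combines it with Proposition \ref{triang}, which identifies $\mathfrak{C}$ with $\mathfrak{T}\circ\mathfrak{C}_{\square_c}$, and with the fact that triangulation induces quasi-isomorphisms on cubical chains. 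Note that a direct comparison of $\Lambda(K)(x,y)$ with $C^N_*(\mathfrak{C}(K)(x,y))$ cannot be an isomorphism, since the latter is the simplicial chain complex of a triangulated cube rather than the (much smaller) cubical chain complex; this is exactly why the cubical detour is needed.

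First I would establish the isomorphism $\Lambda(K) \cong \mathfrak{Q}_{\square}(\mathfrak{C}_{\square_c}(K))$. Both sides have object set $K_0$, so the task is to match the mapping complexes naturally and compatibly with composition. By the Remark following the definition of $\Lambda$, the generating monomials $(\sigma_1 | \cdots | \sigma_k)$ of $\Lambda(K)(x,y)$ correspond precisely to the objects of $(Nec \downarrow K)_{x,y}$, with such a monomial placed in degree $n_1 + \cdots + n_k - k = \dim T$. On the other side, $\mathfrak{C}_{\square_c}(K)(x,y)$ is by definition the colimit over $(Nec\downarrow K)_{x,y}$ of the cubes $C_{\square_c}(T)(\alpha_T, \omega_T) \cong \square_c^{\dim T}$, so after unwinding the colimit the chain complex $C^{\square}_*(\mathfrak{C}_{\square_c}(K)(x,y))$ is presented in degree $n$ by generators indexed by necklaces of dimension $n$. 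I would then check that this correspondence of generators (a) carries the two defining relations of $\Lambda(K)(x,y)$ — collapsing a degenerate $1$-simplex bead, and killing any monomial that contains a degenerate simplex of dimension $\geq 2$ — onto the passage $\tilde{C}^{\square}_* \twoheadrightarrow C^{\square}_* = \tilde{C}^{\square}_*/D_*$ that kills cubical degeneracies and connections; (b) takes the cobar-type differential $d_{\Lambda} = -\partial' + \Delta'$ to the alternating cubical boundary $\sum_j (-1)^j(\partial^1_j - \partial^0_j)$, with $\partial'$ accounting for the faces internal to a bead and $\Delta'$ for the faces that split a bead at one of its vertices; and (c) intertwines concatenation of monomials with the composition on $\mathfrak{C}_{\square_c}$ induced by wedging of necklaces. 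All three are unwindings of the explicit description of $C_{\square_c}: Nec \to Set_{\square_c}$, its cells, and its face maps given in section 4 of \cite{RiZe17}; alternatively, since $\Lambda$ (as the left adjoint of $N_{dg}$) and $\mathfrak{Q}_{\square}\circ\mathfrak{C}_{\square_c}$ both preserve colimits, it suffices to produce the isomorphism compatibly on the standard simplices $\Delta^n$, where $\Lambda(\Delta^n)(i,j)$ is directly identified with $C^{\square}_*(\square_c^{\,j-i-1})$.

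Second I would invoke that for every cubical set with connections $Q$ there is a natural chain map $C^{\square}_*(Q) \to C^N_*(\mathcal{T}(Q))$ which is a quasi-isomorphism: since $C^{\square}_*$ and $C^N_* \circ \mathcal{T}$ both preserve colimits and this map is a quasi-isomorphism on the representables $\square_c^n$ (where both sides compute the chains of a contractible $n$-cube), the general case follows by a skeletal-filtration argument. Applying this on mapping spaces produces, for any $\mathcal{D} \in Cat_{\square_c}$, a dg functor $\mathfrak{Q}_{\square}(\mathcal{D}) \to \mathfrak{Q}(\mathfrak{T}(\mathcal{D}))$ that is the identity on objects and a quasi-isomorphism on each mapping complex; a quasi-isomorphism on morphism complexes induces an isomorphism on $H_0$, hence an essentially surjective (in fact fully faithful on homotopy categories) functor, so this dg functor is a weak equivalence of dg categories. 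Combining everything, the composite
\[ \Lambda(K) \;\cong\; \mathfrak{Q}_{\square}(\mathfrak{C}_{\square_c}(K)) \;\xrightarrow{\ \sim\ }\; \mathfrak{Q}(\mathfrak{T}(\mathfrak{C}_{\square_c}(K))) \;=\; (\mathfrak{Q}\circ\mathfrak{C})(K) \]
is the desired natural weak equivalence, naturality in $K$ being clear since every arrow above is natural. The main obstacle is the first step: the sign and relation bookkeeping needed to see that $-\partial' + \Delta'$ matches the signed cubical boundary and that the quotient presenting $\Lambda(K)(x,y)$ corresponds exactly to killing $D_*$ in $C^{\square}_*(\mathfrak{C}_{\square_c}(K)(x,y))$ — all the genuine content lives in the necklace combinatorics worked out in \cite{RiZe17}.
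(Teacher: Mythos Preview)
Your proposal is correct and follows essentially the same route as the paper's proof: identify $\Lambda(K)$ with normalized cubical chains on $\mathfrak{C}_{\square_c}(K)$ (the paper cites section~6 of \cite{RiZe17} for this), use Proposition~\ref{triang} to identify $\mathfrak{C}$ with $\mathfrak{T}\circ\mathfrak{C}_{\square_c}$, and then invoke the natural quasi-isomorphism between $C^{\square}_*(Q)$ and $C^N_*(\mathcal{T}(Q))$ (the paper cites Lemma~7.2 of \cite{RiZe17}). You have simply unpacked in more detail what the paper delegates to \cite{RiZe17}.
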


\begin{proof} For any two $x,y \in K$, the chain complex $\Lambda(K)(x,y)$ is isomorphic to the chain complex of normalized cubical chains on $\mathfrak{C}_{\square_c}(K)(x,y)$, as explained in section 6 of \cite{RiZe17}. Then by Proposition \ref{triang}, there is an isomorphism of simplicial sets $\mathfrak{C}(K)(x,y) \cong \mathcal{T}(\mathfrak{C}_{\square_c}(K)(x,y) )$ where $\mathcal{T}: Set_{\square_c} \to Set_{\Delta}$ is the triangulation functor. Finally, the desired result follows from Lemma 7.2 of \cite{RiZe17} which says that there is a natural quasi-isomorphism between the chain complex of normalized simplicial chains on $\mathcal{T}(\mathfrak{C}_{\square_c}(K) )$ and the chain complex of normalized cubical chains on $\mathfrak{C}_{\square_c}(K)$. 
\end{proof}

By Proposition 6.2 of \cite{DuSp11} there is a natural weak equivalence of simplicial categories $\mathfrak{C}(K \times L) \simeq \mathfrak{C}(K) \times \mathfrak{C}(L)$. Hence, by Proposition \ref{LandC} and a classical result of Eilenberg-Zilber it follows that there is a weak equivalence of dg categories $\Lambda(K \times L) \simeq \Lambda(K) \otimes \Lambda(L)$. We describe explicitly the map $AW_{\Lambda}: \Lambda(K \times L ) \to \Lambda(K ) \otimes \Lambda(L)$. 

\begin{proposition} \label{oplax} For any simplicial sets $K$ and $L$, there exists a dg functor $AW_{\Lambda}: \Lambda(K \times L ) \to \Lambda(K ) \otimes \Lambda(L)$, endowing $\Lambda: Set_{\Delta} \to dgCat_{\mathbf{k}}$ with an oplax monoidal structure.
\end{proposition}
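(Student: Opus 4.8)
The plan is to construct $AW_\Lambda$ explicitly on standard simplices and then extend by colimits, using the same combinatorial model for $\Lambda$ as in the definition. First I would recall that $\Lambda(K)(x,y)$ is the free $\mathbf k$-module on necklaces $t\colon\Delta^{n_1}\vee\cdots\vee\Delta^{n_k}\to K$ from $x$ to $y$, modulo the degeneracy relations, with differential $d_\Lambda=-\partial'+\Delta'$ applied beadwise and composition given by concatenation. A generator of $\Lambda(K\times L)(x,y)$ is thus a monomial $[\sigma_1|\cdots|\sigma_k]$ where each $\sigma_i$ is a nondegenerate simplex of $K\times L$ of positive dimension; writing $\sigma_i=(\sigma_i^K,\sigma_i^L)$ via the two projections, I would define $AW_\Lambda[\sigma_1|\cdots|\sigma_k]$ by applying, on each bead $\sigma_i\in(K\times L)_{n_i}$, the classical Alexander--Whitney map
\[
\sigma_i\;\longmapsto\;\sum_{p+q=n_i}(K\times L)(f_p)(\sigma_i)\,\otimes\,(K\times L)(l_q)(\sigma_i)
\;=\;\sum_{p+q=n_i}\bigl(K(f_p)\sigma_i^K,\,L(f_p)\sigma_i^K\bigr)\otimes\cdots
\]
so that each bead $\sigma_i$ is replaced by a sum of pairs of beads $(\sigma_i', \sigma_i'')$ — the front $p$-face paired in the first tensor factor, the back $q$-face in the second — with appropriate Koszul signs coming from moving the $s^{-1}$-shifted symbols past one another, exactly as in the cobar differential. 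Concatenating these bead-by-bead outputs gives a monomial in $\Lambda(K)(x,y)\otimes\Lambda(L)(x,y)$; extending $\mathbf k$-linearly and multiplicatively over concatenation defines $AW_\Lambda$ on generators.

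The key steps, in order, are then: (1) check $AW_\Lambda$ descends to the degeneracy relations defining $\Lambda$ — a bead that is a degenerate $1$-simplex splits under Alexander--Whitney into pieces at least one of which is again degenerate in the required way, and a degenerate simplex of dimension $\ge 2$ goes to a sum each of whose terms has a degenerate bead of dimension $\ge 2$ in one factor, hence to zero; here one uses that the two projections of a degenerate simplex are degenerate; (2) check $AW_\Lambda$ is a chain map, i.e. $AW_\Lambda\circ d_\Lambda=(d_\Lambda\otimes 1\pm 1\otimes d_\Lambda)\circ AW_\Lambda$ — this is the heart of the computation and reduces, bead by bead, to the standard fact that the Alexander--Whitney map is a chain map together with its compatibility with the Alexander--Whitney \emph{coproduct} $\Delta'$, which is precisely coassociativity/coherence of $AW$; (3) check compatibility with composition, which is immediate since both $AW_\Lambda$ and the product are defined by concatenation of beads, and with identities (there are none of positive degree, so nothing to check); (4) check naturality in $K$ and $L$, which is clear since $AW$ is natural and $\Lambda$ is a colimit over necklaces; (5) verify the oplax monoidal coherence diagrams — associativity of $AW_\Lambda$ up to the canonical isomorphism $\Lambda(K)\otimes(\Lambda(L)\otimes\Lambda(P))\cong(\Lambda(K)\otimes\Lambda(L))\otimes\Lambda(P)$, and the unit condition relating $\Lambda$ of the point to $\mathbf k$ — both following from the corresponding coassociativity and counit properties of the simplicial Alexander--Whitney map recalled in the Example in section 2.6.

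I would carry out the construction first on $\Lambda(\Delta^m\times\Delta^n)$, observe functoriality of the formula in $[m],[n]\in\mathbf\Delta$, and then define $AW_\Lambda$ on general $K,L$ by $\Lambda(K\times L)=\operatorname*{colim}_{\Delta^m\to K,\ \Delta^n\to L}\Lambda(\Delta^m\times\Delta^n)$, using that $K\times L=\operatorname*{colim}(\Delta^m\times\Delta^n)$ and that $\Lambda$ preserves colimits (being a left adjoint); the target $\Lambda(K)\otimes\Lambda(L)$ receives the compatible cone, so $AW_\Lambda$ is induced. That $AW_\Lambda$ is a weak equivalence of dg categories is not part of this proposition's statement — it follows from Proposition~\ref{LandC}, the Duskin--Spivak equivalence $\mathfrak C(K\times L)\simeq\mathfrak C(K)\times\mathfrak C(L)$, and Eilenberg--Zilber, as already noted in the paragraph preceding the proposition.

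\textbf{Main obstacle.} The delicate point is step (2): bookkeeping the Koszul signs so that the beadwise Alexander--Whitney splitting intertwines the cobar-type differential $d_\Lambda=-\partial'+\Delta'$ on the source with the tensor differential on the target. One must track signs from three sources simultaneously — the internal differential $\partial'$ on a bead, the reduced coproduct $\Delta'$ that splits a bead in two, and the shift $s^{-1}$ — and confirm that the front-face/back-face decomposition of $\partial'(\sigma_i)$ and of $\Delta'(\sigma_i)$ matches the $AW$-image of $\partial'$ and $\Delta'$ of the whole; this is where the known chain-map and comultiplicativity identities for the simplicial $AW$ map must be invoked in the correct graded form, and a small sign slip here would invalidate the chain-map property. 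The remaining steps are routine once the correct sign conventions (the Koszul rule, stated at the end of section 2.4) are fixed.
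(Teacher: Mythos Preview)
Your proposal has a genuine gap: the beadwise simplicial Alexander--Whitney map you describe does not preserve degree, so it cannot define a dg functor. Recall that a single bead $[\sigma]$ with $\sigma\in (K\times L)_n$ has degree $n-1$ in $\Lambda(K\times L)$ because of the shift $s^{-1}$. Your formula sends it to a sum of terms $[K(f_p)\sigma^K]\otimes[L(l_q)\sigma^L]$ with $p+q=n$, and such a term has degree $(p-1)+(q-1)=n-2$ in $\Lambda(K)\otimes\Lambda(L)$. So the map is off by one in degree on every single-bead generator, and no sign bookkeeping will repair this. The problem is intrinsic: the simplicial Alexander--Whitney diagonal is adapted to the grading of $C_*$, not to the shifted grading of the cobar-type construction $\Lambda$.

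The paper's proof avoids this by passing through the \emph{cubical} model: it uses the identification $\Lambda\cong \mathfrak{Q}_{\square_c}\circ\mathfrak{C}_{\square_c}$ and applies the \emph{Serre diagonal} $\Delta_{\square}$ on normalized cubical chains of the mapping spaces, followed by the two projections. The point is that a bead $\Delta^n$ corresponds to an $(n-1)$-cube, and the Serre diagonal on an $(n-1)$-cube is a sum of products of sub-cubes whose dimensions add up to $n-1$, so degree is preserved automatically. Translated back into necklaces this yields Baues' formula (Remark~4.8): a single bead $[\sigma]$ is sent to a sum indexed by subsets $a=\{0=a_0<\cdots<a_r=n\}$ of terms
\[
[\sigma(a_0,\ldots,a_1)\,|\,\cdots\,|\,\sigma(a_{r-1},\ldots,a_r)]\ \otimes\ [\sigma(a)],
\]
where one tensor factor is a \emph{multi}-bead necklace of total degree $n-r$ and the other a single bead of degree $r-1$, giving total degree $n-1$ as required. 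Your formula is the special case $r=1$ only; the remaining terms, which break a bead into several beads on one side, are essential and are precisely what the Serre diagonal supplies. If you want an elementary proof avoiding the cubical machinery, you would need to write down this Baues-type formula directly and verify the chain-map and coassociativity identities by hand---but that is exactly the computation the cubical/Serre-diagonal packaging is designed to organize.
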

\begin{proof}
Denote by $\mathfrak{Q}_{\square_c}: Cat_{\square_c} \to dgCat_{\mathbf{k}}$ the functor defined by applying normalized cubical chains at the level of mapping spaces. From the construction of $\Lambda$ it  follows that there is a natural isomorphism of functors $\Lambda \cong \mathfrak{Q}_{\square_c} \circ \mathfrak{C}_{\square_c}$. Define $AW_{\Lambda}: \Lambda(K \times L ) \to \Lambda(K ) \otimes \Lambda(L)$ as follows.  Define a dg functor 
\begin{equation*}
\Delta_{\square}: \mathfrak{Q}_{\square_c}( \mathfrak{C}_{\square_c} (K \times L) ) \to \mathfrak{Q}_{\square_c}( \mathfrak{C}_{\square_c} (K \times L) ) \otimes \mathfrak{Q}_{\square_c}( \mathfrak{C}_{\square_c} (K \times L) )
\end{equation*}
to be the diagonal map on objects and the Serre diagonal map applied to the cubical sets with connections on mapping spaces (as recalled in Example 2.2). Consider the dg functor $$P_1 \otimes P_2:  \mathfrak{Q}_{\square_c}( \mathfrak{C}_{\square_c} (K \times L) ) \otimes \mathfrak{Q}_{\square_c}( \mathfrak{C}_{\square_c} (K \times L) ) \to \mathfrak{Q}_{\square_c} (\mathfrak{C}_{\square_c}(K) )\otimes \mathfrak{Q}_{\square_c} (\mathfrak{C}_{\square_c}(L)),$$ where $P_1: \mathfrak{Q}_{\square_c}( \mathfrak{C}_{\square_c} (K \times L) ) \to \mathfrak{Q}_{\square_c}( \mathfrak{C}_{\square_c} (K))$ is induced by the projection $K \times L \to K$ and $P_2$ is defined similarly. Finally, define

\begin{equation} 
\mathfrak{Q}_{\square_c}( \mathfrak{C}_{\square_c} (K \times L) ) \to \mathfrak{Q}_{\square_c} (\mathfrak{C}_{\square_c}(K) )\otimes \mathfrak{Q}_{\square_c} (\mathfrak{C}_{\square_c}(L))
\end{equation}
to be the composition  $ P_1 \otimes P_2 \circ \Delta_{\square}$.  Using the isomorphism $\Lambda \cong \mathfrak{Q}_{\square_c} \circ \mathfrak{C}_{\square_c}$, the above functor yields a functor
$$AW_{\Lambda}: \Lambda(K \times L ) \to \Lambda(K ) \otimes \Lambda(L).$$ This construction endows $\Lambda$ with an oplax monoidal structure given by $AW_{\Lambda}.$
\end{proof}

Recall the following observation from  \cite{RiZe17} (Theorem 7.1) which opens up the possibility of using algebraic techniques to study $\infty$-local systems as we shall see throughout this article.
\begin{theorem} \label{cobar} Let $K$ be a simplicial set such that $K_0=\{x\}$. Then there is an isomorphism of dg algebras $\varphi: \Omega (C^N_*(K), \partial, \Delta) \xrightarrow{\cong} \Lambda(K)(x,x)$. 
\end{theorem}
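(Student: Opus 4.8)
The plan is to exhibit the isomorphism $\varphi$ directly on generators and check it is compatible with products and differentials. Recall that $\Omega(C^N_*(K), \partial, \Delta)$ has underlying graded module the tensor algebra on $s^{-1}\overline{C^N_*(K)}$, with monomials written $[c_1 | \dots | c_n]$ where each $c_i$ is (the desuspension of) a non-degenerate simplex of $K$ of positive dimension, and product given by concatenation. On the other side, $\Lambda(K)(x,x)$ is by definition the quotient of the free $\mathbf{k}$-module on monomials $(\sigma_1 | \dots | \sigma_k)$ with each $\sigma_i$ a generator of $s^{-1}C_{*>0}(K)$ satisfying the composability condition $\max\sigma_i = \min\sigma_{i+1}$ (automatically satisfied here since $K_0 = \{x\}$), modulo the two degeneracy relations, and with product again concatenation. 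So I would \emph{define} $\varphi[c_1|\dots|c_n] = [c_1 | \dots | c_n]$, sending the cobar monomial on non-degenerate positive-dimensional simplices to the corresponding $\Lambda$-monomial, and extend $\mathbf{k}$-linearly and multiplicatively.

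The first step is to check $\varphi$ is a well-defined isomorphism of graded $\mathbf{k}$-modules. Here the key point is to reconcile the two normalizations: in $\Omega C^N_*(K)$ one has already passed to normalized chains $C^N_*(K)$, so degenerate simplices are zero at the level of each letter, whereas in $\Lambda(K)(x,y)$ one starts from all simplices and imposes: a degenerate $1$-simplex letter in a word of length $\geq 2$ can be deleted, and any degenerate simplex of dimension $\geq 2$ kills the whole word. The degenerate-$1$-simplex deletion relation corresponds, under the shift, precisely to the fact that the unique degenerate $1$-simplex $s_0(x)$ represents the unit in $\Omega C^N_*(K)$ (it lies in $C_1(K)$ and becomes the identity morphism, i.e. the empty word $[\ ] \in \Lambda(K)(x,x)$ of the $x$ component), so the quotient by that relation is exactly passage to the reduced/normalized tensor algebra; and the "kill the word" relation for higher degenerate simplices is exactly the passage to $\overline{C^N_*(K)} = C_{*>0}(K)/(\text{degenerate})$ in each letter. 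Thus on underlying graded modules both sides are the tensor algebra on $s^{-1}$ of non-degenerate positive-dimensional simplices, and $\varphi$ is a bijection on a basis.

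The second step is compatibility of differentials: $d_{\Omega C^N_*(K)}$ is the derivation extending $-d_C + \Delta'$, where $d_C = \partial$ is the full simplicial boundary (restricted appropriately by normalization) and $\Delta'$ is the reduced Alexander–Whitney coproduct, while $d_\Lambda$ is the derivation extending $-\partial' + \Delta'$, where $\partial'$ drops the first and last face maps. The discrepancy between $\partial$ and $\partial'$ is exactly the terms $d_0$ and $d_n$; on a non-degenerate $n$-simplex $\sigma$ these contribute the leading and trailing faces. The resolution is the standard cobar bookkeeping: those two "extremal" face terms, which in $\Omega C^N_*(K)$ appear inside $-d_C$, reappear in $\Lambda$ repackaged into the coproduct term $\Delta'$ via the combinatorics of the Alexander–Whitney diagonal — concretely, $\Delta(\sigma) = \sum_{p+q=n} d_{p+1}\cdots d_n \sigma \otimes d_0 \cdots d_{p-1}\sigma$, and the extreme summands $p=0$ and $p=n$ of $\Delta'$ are degenerate or unit-type and interact with the normalization so that matching $-\partial' + \Delta'$ against $-\partial + \Delta'$ is an identity after passing to normalized chains. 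This is precisely the content already extracted in \cite{RiZe17}; I would cite Theorem 7.1 there for the detailed sign check rather than reproduce it, and note that both differentials are derivations for concatenation so it suffices to check agreement on single letters $[c]$, where it becomes a finite identity of faces of $\sigma$.

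The main obstacle is the sign bookkeeping in the second step: the shift functors $s^{-1}$ introduce Koszul signs that differ in appearance between the cobar convention ($d_{\Omega C} = -d_C + \Delta'$ extended as a derivation with the prescribed $\epsilon_j$ signs) and the $\Lambda$ convention ($d_\Lambda = -\partial' + \Delta'$), and one must verify the single-letter identity $\varphi(d_{\Omega C^N_*(K)}[c]) = d_\Lambda[c]$ with all signs, then invoke the derivation property and multiplicativity of $\varphi$ to conclude on all monomials. Since $\varphi$ is visibly a bijection of bases and an algebra map, once the differential is matched on generators it is matched everywhere, giving the claimed isomorphism of dg algebras. As everything is natural in $K$ (all the relations and the diagonal are), $\varphi$ assembles to a natural transformation, completing the proof.
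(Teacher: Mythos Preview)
Your proposed map $\varphi[c_1|\dots|c_n]=[c_1|\dots|c_n]$ is indeed an isomorphism of graded algebras, but it does \emph{not} commute with the differentials, and your second step does not actually resolve the discrepancy you correctly identify between $\partial$ and $\partial'$. The reduced coproduct $\Delta'$ is literally the same operation in both $d_{\Omega}$ and $d_{\Lambda}$, so ``repackaging'' cannot occur: for an $n$-simplex $\sigma$ with $n\ge 2$ one has $d_{\Omega}[\sigma]-d_{\Lambda}[\sigma]=-[d_0\sigma]-(-1)^n[d_n\sigma]$, and the faces $d_0\sigma$, $d_n\sigma$ are $(n-1)$-simplices which are in general non-degenerate, hence nonzero in normalized chains. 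The claim that ``the extreme summands $p=0$ and $p=n$ of $\Delta'$ are degenerate or unit-type'' is false; those summands are exactly the ones already removed in passing from $\Delta$ to $\Delta'$.

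The paper's formula (stated immediately after the theorem) fixes this by a nontrivial twist in degree zero: $\varphi[\sigma]=[\sigma]$ for $\dim\sigma\ge 2$ but $\varphi[\sigma]=[\sigma]-c_x$ for $\sigma$ a $1$-simplex, where $c_x\in\Lambda(K)(x,x)$ is the degenerate $1$-simplex (the unit). Extended multiplicatively, this correction does the work your argument was missing. Concretely, the two extremal terms of $\Delta'(\sigma)$ are $[\sigma(0,1)\,|\,d_0\sigma]$ and $\pm[d_n\sigma\,|\,\sigma(n-1,n)]$, each containing a $1$-simplex letter; applying the paper's $\varphi$ to, say, the first gives $([\sigma(0,1)]-c_x)\cdot[d_0\sigma]=[\sigma(0,1)\,|\,d_0\sigma]-[d_0\sigma]$, and the extra $-[d_0\sigma]$ is precisely the missing outer-face term. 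So the obstacle is not merely sign bookkeeping: you need to change the map on $1$-simplex generators.
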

The isomorphism of dg algebras $\varphi$ is determined by $\varphi[\sigma]=[\sigma]$ if $\text{deg}( \sigma) >1$ and $\varphi[\sigma]= [\sigma]- c_x$ if $\text{deg}(\sigma)=1$ where $c_x \in \Lambda(K)(x,x)$ corresponds to the single bead necklace $\Delta^1 \to K$ which is degenerate at $x$. 
\begin{remark} By Proposition \ref{oplax} and  Theorem \ref{cobar} it follows that, for any $K$ with $K_0=\{x\}$, $AW_{\Lambda}$ induces a coproduct on $\Omega (C^N_*(K), \partial, \Delta)$ which is compatible with concatenation of monomials giving rise to a dg bialgebra structure on $\Omega (C^N_*(K), \partial, \Delta)$. This algebraic structure was already discussed by Baues in \cite{Ba98}; we recall the explicit formula for the coproduct. Given a subset $a=\{a_0 < a_1  < ... < a_r\} \subset \{0, ... , n\}$ we obtain an injective function $i_a: \{0 ,..., r \} \to \{0,...,n\}$ with the subset $a$ as image. For any simplex $\sigma \in K_n$ define $\sigma(a)= i^*_a(\sigma) \in K_r$. Let $n\geq 2$ and denote by $x$ and $y$ the first and last vertices of $\sigma \in K_n$. Then the coproduct $AW_{\Lambda}: \Lambda(K)(x,y) \to \Lambda(K)(x,y) \otimes \Lambda(K)(x,y)$ is given by
\begin{equation*}
AW_{\Lambda} [\sigma] =\sum_{ a \leq \{0,...,n\} } (-1)^{\epsilon(a)}[ \sigma(a_0, a_0+1, ..., a_1), ..., \sigma(a_{r-1}, a_{r-1}+1,...,a_r)] \otimes [\sigma(a)],
\end{equation*}
where the sum runs through all subsets $a=(a_0,...,a_r) \subset \{0,...,n\}$ with $a_0=0 < a_1 < ... < a_r=n$ and $\epsilon(a)= \sum_{i=1}^r(i-1)( \text{dim} (\sigma(a_{i-1},...,a_i))  -1).$ If $n=1$, so $\text{deg}[\sigma]=0$, then $AW_{\Lambda} [\sigma]= [\sigma] \otimes [\sigma]$. For monomials of arbitrary length this formula is extended as an algebra map. Baues discusses how such formula may be obtained from the Serre diagonal and a cubical interpretation of the cobar construction. Hence, we can think of $AW_{\Lambda}$ as an extension of Baues' coproduct on the cobar construction to a functor of dg categories.
\end{remark}

Finally, we recall one more result from \cite{RiZe17} (Corollary 7.9) which follows from Theorem \ref{cobar} together with some basic results from the theory of quasi-categories. It extends a classical theorem of Adams, the main result of \cite{Ad52}, to path-connected spaces which are not necessarily simply connected:

\begin{corollary}\label{rize17} Let $(X,b)$ be a path-connected pointed space and $C= (C^N_*(X,b), \partial, \Delta)$ the connected dg coalgebra of normalized singular chains with vertices at $b \in X$ and Alexander-Whitney coproduct. Then the cobar construction $\Omega C$  is weakly equivalent as a dg algebra to  $C_*(\Omega_bX)$, the singular chains on the space of (Moore) loops in $X$ based at $b$ with product induced by concatenation of loops.
\end{corollary}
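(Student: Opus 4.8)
The plan is to chain together Theorem~\ref{cobar}, Proposition~\ref{LandC}, and the standard identification of hom-spaces of the rigidification of a Kan complex with path spaces, while keeping track of the multiplicative structures throughout. Write $K=\mathrm{Sing}(X,b)$, a Kan complex with $K_0=\{b\}$, and let $C=(C^N_*(K),\partial,\Delta)$ be as in the statement. By Theorem~\ref{cobar} there is an isomorphism of dg algebras $\Omega C\cong\Lambda(K)(b,b)$, so it suffices to produce a zigzag of quasi-isomorphisms of dg algebras $\Lambda(K)(b,b)\simeq C_*(\Omega_b X)$.

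First I would pass from $\Lambda$ to the rigidification $\mathfrak{C}$. Proposition~\ref{LandC} furnishes a natural weak equivalence of dg categories $\Lambda(K)\simeq(\mathfrak{Q}\circ\mathfrak{C})(K)$; unwinding its proof (through $\mathfrak{C}_{\square_c}$ and the triangulation functor), this is realized by a zigzag of comparison maps that are the identity on the object set $K_0$ and compatible with composition on morphism complexes. Restricting to the endomorphisms of the single vertex $b$ therefore gives a zigzag of quasi-isomorphisms of dg algebras
\[
\Lambda(K)(b,b)\ \simeq\ C_*\big(\mathfrak{C}(K)(b,b)\big),
\]
where $\mathfrak{C}(K)(b,b)$ is a simplicial monoid under the composition of $\mathfrak{C}(K)$ and $C_*(-)$ is made lax monoidal, hence dg-algebra-valued on simplicial monoids, via the Eilenberg--Zilber/shuffle map. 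It then remains to identify $C_*(\mathfrak{C}(K)(b,b))$ with $C_*(\Omega_b X)$ as dg algebras. On underlying homotopy types this is classical: $K$ is in particular a quasi-category, so by the Remark following Proposition~\ref{triang} the Kan complex $\mathfrak{C}(K)(b,b)$ is homotopy equivalent to $\mathrm{Hom}^R_K(b,b)$; and since $X$ is path-connected we have $|K|=|\mathrm{Sing}(X,b)|\simeq|\mathrm{Sing}(X)|\simeq X$, while $\mathrm{Hom}^R_K(b,b)$ is the usual model for the based loop space $\Omega_b|K|\simeq\Omega_b X$. To upgrade this to an equivalence of dg \emph{algebras} I would use that the simplicial monoid $\mathfrak{C}(K)(b,b)$ is linked, through a zigzag of weak equivalences of simplicial monoids, to $\mathrm{Sing}(\Omega_b X)$: concatenation of necklaces, which is the composition in $\mathfrak{C}$, corresponds under the adjunction between $\mathfrak{C}$ and the homotopy coherent nerve to concatenation of Moore loops (equivalently, $\mathfrak{C}(K)(b,b)$ is weakly equivalent as a simplicial monoid to Kan's loop group $GK$, with $|GK|\simeq\Omega_b X$ as grouplike topological monoids). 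Applying $C_*(-)$, which carries weak equivalences of simplicial sets to quasi-isomorphisms and is lax monoidal, converts this into a zigzag of quasi-isomorphisms of dg algebras $C_*(\mathfrak{C}(K)(b,b))\simeq C_*(\mathrm{Sing}(\Omega_b X))=C_*(\Omega_b X)$. Concatenating all the zigzags yields $\Omega C\simeq C_*(\Omega_b X)$ as dg algebras, as claimed.

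The hard part is the \emph{multiplicative} comparison in the last step. The underlying equivalence $\mathfrak{C}(\mathrm{Sing}(X,b))(b,b)\simeq\Omega_b X$ of spaces is essentially formal, a consequence of the description of mapping spaces of $\mathfrak{C}(\mathrm{Sing}(X))$ as path spaces together with $|\mathrm{Sing}(X,b)|\simeq X$; the delicate point is that the Moore loop space is a \emph{strict} topological monoid whose product must be matched, through a chain of weak equivalences of strictly associative simplicial monoids (or, after chains, of $A_\infty$-algebras), with the composition of $\mathfrak{C}(\mathrm{Sing}(X,b))$. This rectification is exactly what is carried out in \cite{RiZe17} via the cubical rigidification $\mathfrak{C}_{\square_c}$ and its cubical-chains model of the based loop space; alternatively one invokes the comparison of $\mathfrak{C}$ with the Dwyer--Kan/Kan loop groupoid. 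Everything else needed — Theorem~\ref{cobar}, Proposition~\ref{LandC}, the homotopy invariance and lax monoidality of $C_*(-)$, and the fact that $\mathrm{Hom}^R_K(b,b)$ computes $\Omega_b X$ — is already available.
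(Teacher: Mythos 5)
The paper itself does not reprove this corollary: it simply recalls it from \cite{RiZe17} (their Corollary~7.9) with the remark that it follows from Theorem~\ref{cobar} together with basic facts about quasi-categories. Your sketch correctly reconstructs the shape of that argument: Theorem~\ref{cobar} reduces the problem to identifying $\Lambda(K)(b,b)$ multiplicatively with $C_*(\Omega_bX)$, Proposition~\ref{LandC} passes to (cubical) chains on the rigidification, and the mapping space $\mathfrak{C}(K)(b,b)$ for a one-vertex Kan complex $K$ is a Kan complex modeling $\Omega_b X$. You also correctly flag the actual content of the corollary, namely the \emph{multiplicative} comparison between the composition monoid structure on $\mathfrak{C}(K)(b,b)$ and Moore-loop concatenation, and you correctly attribute that rectification to the machinery of \cite{RiZe17}. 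Since the paper's ``proof'' is itself a citation to \cite{RiZe17}, that is an honest account.

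One technical remark on routing: you propose to land on $C^N_*(\mathfrak{C}(K)(b,b))$ made into a dg algebra by the Eilenberg--Zilber shuffle, and then compare with $\mathrm{Sing}(\Omega_bX)$ (or Kan's loop group). In \cite{RiZe17} the comparison deliberately stays in the cubical world: the isomorphism $\Lambda(K)(b,b)\cong C^{\square}_*(\mathfrak{C}_{\square_c}(K)(b,b))$ is an isomorphism of dg \emph{algebras} because cubical chains are strictly monoidal, and the map to the Moore loop space is built explicitly from Adams' families $\upsilon_n:[0,1]^{n-1}\to P_{0,n}|\Delta^n|$ (the same ones the present paper invokes in Section~8), giving a map of monoids in cubical sets into the cubical singular set of $\Omega_bX$. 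This avoids both the lax-monoidality of simplicial chains and the Dwyer--Kan/loop-group comparison. Your route is not wrong, but it interposes an extra rectification (shuffle product versus the cubical product under triangulation) and an extra equivalence of simplicial monoids, neither of which is needed on the cubical path. Otherwise the proposal is consistent with what the paper asserts and cites.
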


\section{Models for the $\infty$-category of $\infty$-local systems}
We introduce three quasi-categories associated to a path-connected pointed space $(X,b)$. From now on we assume $\mathbf{k}$ is a field.

\subsection{As the quasi-category of functors} Let $\text{Sing}(X,b)\subset \text{Sing}(X)$  be the sub Kan complex whose set of $n$-simplices consists of all continuous maps $|\Delta^n| \to X$ which map the vertices of $|\Delta^n|$ to $b$. 

\begin{definition} An $\infty$-\textit{local system} of $\mathbf{k}$-chain complexes over $X$ is a morphism of simplicial sets $F: \text{Sing}(X,b) \to N_{dg} Ch_{\mathbf{k}}$. 
We denote by $Loc^{\infty}_X$  the quasi-category of $\infty$-local systems of $\mathbf{k}$-chain complexes over $X$, namely 
\begin{eqnarray}
Loc^{\infty}_X := \text{Fun}( \text{Sing}(X,b), N_{dg}Ch_{\mathbf{k}}).
\end{eqnarray}
\end{definition}

The $n$-simplices of $Loc^{\infty}_X$ are given by
\begin{eqnarray}
(Loc^{\infty}_X)_n= Set_{\Delta} (\Delta^n \times \text{Sing}(X,b), N_{dg}Ch_{\mathbf{k}})
\end{eqnarray}
By adjunction, we have a natural isomorphism
\begin{eqnarray} \label{nsimpinfty}
(Loc^{\infty}_X)_n \cong dgCat_{\mathbf{k}} (\Lambda(\Delta^n \times \text{Sing}(X,b)), Ch_{\mathbf{k}}).
\end{eqnarray}
The simplicial set $Loc^{\infty}_X$ is a quasi-category because $N_{dg}Ch_{\mathbf{k}}$ is a quasi-category. The notion of an $\infty$-local system was defined in \cite{BlSm14} as a refinement of the classical notion of a local system. 

\subsection{As the dg category of dg modules with morphisms between bar constructions}

The data of a local system $F: \text{Sing}(X,b) \to N_{dg}Ch_{\mathbf{k}}$ determines a morphism of dg categories $\tilde{F}: \Lambda(\text{Sing}(X,b) ) \to Ch_{\mathbf{k}}$. Note $\Lambda(\text{Sing}(X,b) ) $ has a single object $b$ so $\tilde{F}$ determines a chain complex $\tilde{F}(b)$ together with a right dg module structure over the dg algebra $\Lambda(\text{Sing}(X,b) ) (b,b) \cong \Omega C$, where $C=(C^N_*(\text{Sing}(X,b); \mathbf{k}), \partial, \Delta)$ is the connected dg coalgebra of normalized chains on $\text{Sing}(X,b)$ with Alexander-Whitney coproduct $\Delta: C \to C\otimes C$. 
\\

We recall certain categorical constructions associated to any dg algebra $(A,d_A)$.

\begin{definition} Let $\text{Mod}_A$ be the dg category having as objects right dg $A$-modules which are bounded below and for any two such objects $(M, d_M)$ and $(N, d_N)$ a chain complex $(\text{Mod}_A(M,N), \delta)$ of morphisms defined as follows. As a graded $\mathbf{k}$-module  $\text{Mod}_A(M,N) = \bigoplus_{p \in \mathbb{Z} } \text{Mod}_A(M,N)_p$, where $\text{Mod}_A(M,N)_p$ is the $\mathbf{k}$-module of degree $p$ maps $f: M \to N$ of right $A$-modules. The differential $\delta: \text{Mod}_A(M,N)_* \to \text{Mod}_A(M,N)_{*-1}$ is defined by 
\begin{equation*}
\delta (f)=  d_N \circ f - (-1)^{|f|} f \circ d_M.
\end{equation*}
Composition of morphisms is defined in the obvious way. The \textit{derived} $\infty$-\textit{category} of right dg $A$-modules is the quasi-category $N_{dg}((\text{Mod}_A)_{\text{proj}})$, where $(\text{Mod}_A)_{\text{proj}}$ is the full sub dg category of $\text{Mod}_A$ consisting of projective right dg $A$-modules. 
\end{definition}
The homotopy category of the derived $\infty$-category of right dg $A$-modules has projective modules as objects and morphisms given by chain homotopy classes of chain maps. Thus the homotopy category is isomorphic to the classical derived category of right dg $A$-modules since quasi-isomorphisms between projective modules are invertible up to chain homotopy \cite{Lu11}.

We can also construct a dg category of dg right $A$-modules whose homotopy category consists of $A_{\infty}$-module morphisms. This dg category provides another model for the derived $\infty$-derived category which we will be useful for us. We now assume $(A,d_A)$ is augmented. 

\begin{definition} Let $\text{Mod}^{\infty}_{A}$ be the dg category having the same objects as $\text{Mod}_A$ and for any two such objects $(M, d_M)$ and $(N, d_N)$ a chain complex $(\text{Mod}^{\infty}_{A}(M,N), \delta)$ of morphisms defined as follows. 
As a graded $\mathbf{k}$-module  $\text{Mod}^{\infty}_A(M,N) = \bigoplus_{p \in \mathbb{Z} } \text{Mod}^{\infty}_A(M,N)_p$, where $\text{Mod}^{\infty}_A(M,N)_p$ is the $\mathbf{k}$-module generated by degree $p$ $BA$-comodule maps 
\begin{equation*}
f: M \otimes BA \to N \otimes BA,
\end{equation*}
where $BA$ is the bar construction of $A$ as defined in section 2.6.  The differential $\delta: \text{Mod}^{\infty}_A(M,N)_* \to \text{Mod}^{\infty}_A(M,N)_{*-1}$ is defined by
\begin{equation*}
\delta (f)=  b_N \circ f - (-1)^{|f|} f \circ b_M,
\end{equation*}
where for any right dg $A$-module $M$,  $b_M: M \otimes BA \to M \otimes BA $ is given by
\begin{equation}
\begin{split}
&b_M(m \otimes \{ a_1 | ... | a_n \} ) := d_Mm \otimes \{a_1 | ... |a_n\} - \sum_{i=1}^n m \otimes (-1)^{\epsilon_i} \{ a_1| ...|d_Aa_i| ... |a_n\}\\
&+ (-1)^{|m|} (m \cdot a_1) \otimes \{ a_2 | ... | a_n \} + \sum_{i=1}^{n-1} (-1)^{\epsilon_i} m \otimes \{ a_1 | ... |a_i a_{i+1} |... | a_n \}
\end{split}
\end{equation}
for any $m \in M$, $\{a_1 | ... |a_n\} \in BA$ and $\epsilon_i= |m| + |a_1| + ... +|a_i| -i +1$. Composition of morphisms is defined in the obvious way.
\end{definition}

\begin{remark} Since $N \otimes BA$ is a free right $BA$-comodule, the chain complex $\text{Mod}^{\infty}_{A}(M,N)$ is equivalent to the vector space generated by linear maps
\begin{equation*}
f: M \otimes BA \to N
\end{equation*}
together with differential $\hat{\delta}$ defined by 
\begin{equation}\label{bar}
\begin{split}
&\hat{\delta}(f)(m \otimes \{ a_1 | ... |a_n \} ) := d_Nf ( m \otimes \{ a_1 | ... |a_n \} ) - (-1)^{|f|} f ( b_M ( m \otimes \{ a_1 | ... |a_n \} ))\\
& + (-1)^{|m|+ |a_1| + ... + |a_{n-1}| - n+1} f( m \otimes \{ a_1 | ... | a_{n-1} \} ) \cdot a_n.
\end{split}
\end{equation}
The $0$-cycles in $\text{Mod}^{\infty}_{A}(M,N)$ are, by definition, $A_{\infty}$-module morphisms between the dg $A$-modules $M$ and $N$ and $H_0(\text{Mod}^{\infty}_{A}(M,N))$ is the vector space of $A_{\infty}$-module morphisms module chain homotopy. Recall that an $A_{\infty}$-module quasi-isomorpshism is invertible up to chain homotopy \cite{Ke01}.
\end{remark}

\begin{proposition} There is a weak equivalence of quasi-categories $$N_{dg}(\text{Mod}^{\infty}_{A}) \simeq N_{dg}((\text{Mod}_A)_{\text{proj}}).$$
\end{proposition}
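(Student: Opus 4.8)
The plan is to construct a weak equivalence directly, rather than appeal to an abstract model-categorical comparison. Both $N_{dg}(\text{Mod}^\infty_A)$ and $N_{dg}((\text{Mod}_A)_{\text{proj}})$ are quasi-categories, so by the characterization of weak equivalences recalled in section 2.6 it suffices to produce a map of simplicial sets that is essentially surjective on homotopy categories and a weak equivalence on the right hom spaces $\text{Hom}^R$. Since the dg nerve functor is a right adjoint and in particular sends weak equivalences of dg categories to weak equivalences of quasi-categories (and since $ho(N_{dg}\mathcal D)$ is the homotopy category of $\mathcal D$ in the sense of section 2.4, with $\text{Hom}^R_{N_{dg}\mathcal D}(M,N)$ weakly equivalent to the Dold-Kan image of the mapping complex $\mathcal D(M,N)$), it is enough to work entirely at the level of dg categories: I will exhibit a zig-zag of dg functors between $\text{Mod}^\infty_A$ and $(\text{Mod}_A)_{\text{proj}}$, each of which is a weak equivalence of dg categories in the sense of section 2.4, i.e. a quasi-isomorphism on all mapping complexes and an equivalence on homotopy categories.

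First I would compare $\text{Mod}^\infty_A$ with $\text{Mod}_A$ on a common class of objects. On objects the two categories agree (bounded-below right dg $A$-modules); the content is a comparison of mapping complexes. Using the Remark following the definition of $\text{Mod}^\infty_A$, the complex $\text{Mod}^\infty_A(M,N)$ is identified with the complex of linear maps $M\otimes BA\to N$ with differential $\hat\delta$ from \eqref{bar}. I would recognize this as the Hochschild-type cochain complex computing $\text{RHom}_A(M,N)$ via the two-sided bar resolution: indeed $M\otimes BA\otimes A\to M$ is the bar resolution of $M$ by free (hence projective, since $A$ is augmented over a field $\mathbf k$) right dg $A$-modules, and $\text{Hom}_A(M\otimes BA\otimes A, N)\cong \text{Hom}_{\mathbf k}(M\otimes BA, N)$ with differential matching $\hat\delta$. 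Hence for all $M,N$ there is a natural quasi-isomorphism $\text{Mod}_A(M,N)\to \text{Mod}^\infty_A(M,N)$ whenever $M$ is projective (in general it is $\text{Mod}_A(M^{\mathrm{cof}},N)\simeq \text{Mod}^\infty_A(M,N)$ for a cofibrant replacement $M^{\mathrm{cof}}$). Restricting to projective modules on both sides, the inclusion-type dg functor $(\text{Mod}_A)_{\text{proj}}\to \text{Mod}^\infty_A$ is then a quasi-isomorphism on mapping complexes.

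Next I would check it induces an equivalence on homotopy categories. On $ho((\text{Mod}_A)_{\text{proj}})$ the morphisms are chain-homotopy classes of strict $A$-module maps between projectives; on $ho(\text{Mod}^\infty_A)$ they are $A_\infty$-module maps modulo homotopy, as recalled in the Remark. Full faithfulness is exactly the $H_0$ of the quasi-isomorphism just established. For essential surjectivity, one uses that over a field every bounded-below dg $A$-module admits a (semifree, hence projective) resolution, so every object of $\text{Mod}^\infty_A$ becomes, via the bar resolution $M\mapsto M\otimes BA\otimes A$ or a semifree replacement, isomorphic in $ho(\text{Mod}^\infty_A)$ to a projective module — here the key input is that an $A_\infty$-quasi-isomorphism is invertible up to homotopy (cited from \cite{Ke01} in the Remark), which upgrades the resolution map to an isomorphism in the homotopy category. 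I expect this essential surjectivity/resolution step, together with the precise bookkeeping identifying $\hat\delta$ with the bar differential on $\text{Hom}_A(M\otimes BA\otimes A,N)$, to be the main obstacle: it is where one must be careful that "projective" as used in the definition of $(\text{Mod}_A)_{\text{proj}}$ is the right notion (cofibrant in the projective model structure on $\text{Ch}_{\mathbf k}$-modules), so that semifree modules qualify and the comparison is genuinely an equivalence and not merely a localization. Once both conditions are verified, applying $N_{dg}$ and invoking its compatibility with weak equivalences of dg categories (Remark after the definition of $N_{dg}$, via \cite{RiZe17}, \cite{Lu11}) yields the asserted weak equivalence of quasi-categories.
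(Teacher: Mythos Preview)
Your proposal is correct and follows essentially the same route as the paper: both construct the inclusion dg functor $(\text{Mod}_A)_{\text{proj}}\to\text{Mod}^\infty_A$, verify it is a weak equivalence of dg categories using the bar resolution $M\otimes BA\otimes A$ for essential surjectivity together with the invertibility of $A_\infty$-quasi-isomorphisms from \cite{Ke01}, and then apply that $N_{dg}$ preserves weak equivalences. The only cosmetic difference is that you package the fully-faithfulness as the identification $\text{Mod}^\infty_A(M,N)\cong \text{Hom}_A(M\otimes BA\otimes A,N)$ computing $\mathrm{RHom}_A$, whereas the paper argues directly that $H_0(\iota)$ is a bijection and then shifts to get all $H_n$; these are two phrasings of the same computation.
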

\begin{proof} We first show that $(\text{Mod}_A)_{\text{proj}}$ and $ \text{Mod}^{\infty}_{A}$ are weak equivalent as dg categories. Consider the dg functor $\iota: (\text{Mod}_A)_{\text{proj}} \to \text{Mod}^{\infty}_{A}$ which is identity on objects and for any two projective modules $P$ and $Q$ the chain map $\iota: (\text{Mod}_A)_{\text{proj}}(P,Q) \to \text{Mod}^{\infty}_{A}(P,Q)$ is given on any $f: P \to Q$ by defining $\iota(f): P \otimes BA \to Q \otimes BA$ to be
$$\iota(f) (p \otimes 1_{\mathbf{k}} )= f(p)$$
and 
$$\iota(f) (p \otimes \{a_1 | ... | a_n \})=0$$ if $n>0$. The dg functor $\iota$ induces an essentially surjective functor of homotopy categories $H_0(\iota): H_0((\text{Mod}_A)_{\text{proj}}) \to H_0(\text{Mod}^{\infty}_{A})$. In fact, given a dg $A$ module $M \in H_0(\text{Mod}^{\infty}_{A})$ we may consider its bar resolution $B(M,A,A)= M \otimes BA \otimes A \in H_0((\text{Mod}_A)_{\text{proj}})$, which is quasi-isomorphic to $M$ as a right dg $A$-module. Since $B(M,A,A)$ is $A_{\infty}$-quasi-isomorphic to $M$ it follows that $B(M, A,A)$ are isomorphic in $H_0(\text{Mod}^{\infty}_{A})$.

The induced map $H_0(\iota): H_0((\text{Mod}_A)_{\text{proj}}(P,Q)) \to H_0(\text{Mod}^{\infty}_{A}(P,Q))$ is injective since a morphism of dg $A$-modules determines a unique $A_{\infty}$-morphism and a chain homotopy of maps of dg $A$-modules determines a unique chain homotopy between the corresponding $A_{\infty}$-morphisms. Moreover, $H_0(\iota): H_0((\text{Mod}_A)_{\text{proj}}(P,Q)) \to H_0(\text{Mod}^{\infty}_{A}(P,Q))$ is surjective since a morphism in $H_0(\text{Mod}^{\infty}_{A}(P,Q))$  represented by a map of  dg $BA$-comodules $f: P \otimes BA \to Q \otimes BA$ induces a morphism between bar resolutions $f \otimes \text{id}_{A}: B(P,A,A) \to B(Q,A,A)$, which provides a lift of $f$ to $H_0((\text{Mod}_A)_{\text{proj}}(P,Q))$. Since, for any $n\geq 0$, we have $H_n((\text{Mod}_A)_{\text{proj}}(P,Q))= H_0((\text{Mod}_A)_{\text{proj}}(P,Q[n]))$, where $Q[n]$ denotes the graded module shifted by $n$, and similarly for $H_n (\text{Mod}^{\infty}_{A}(P,Q))$, it follows that $H_n(\iota)$ is an isomorphism for all $n$. 

Finally, since the the dg nerve functor $N_{dg}$ sends weak equivalences of dg categories to weak equivalences of quasi-categories (as explained in Proposition 1.3.1.20 of \cite{Lu11}), it follows that $N_{dg}(\iota): N_{dg}(\text{Mod}^{\infty}_{A}) \to N_{dg}((\text{Mod}_A)_{\text{proj}})$ is a weak equivalence of quasi-categories. 
\end{proof}

\subsection{As the dg category of dg modules with morphisms between twisted tensor products}
If $A= \Omega C$ for a conilpotent dg coalgebra $C$ we define a new dg category $\text{Mod}^{\tau}_{\Omega C}$  of dg $\Omega C$-modules by replacing the bar construction in the above discussion with a twisted tensor product construction.

\begin{definition} Define a dg category $\text{Mod}^{\tau}_{\Omega C}$  as follows. The objects of $\text{Mod}^{\tau}_{\Omega C}$ are the same objects as in $\text{Mod}^{\infty}_{\Omega C}$, i.e. right dg $\Omega C$-modules. Given two dg $\Omega C$-modules $(M, d_M)$ and $(N, d_N)$ the chain complex $\text{Mod}^{\tau}_{\Omega C}(M,N)$ is defined as the vector space generated by graded right $C$-comodule maps 
\begin{eqnarray*}
f: M \otimes C \to N \otimes  C,
\end{eqnarray*}
with differential $\delta: \text{Mod}^{\tau}_{\Omega C}(M,N) \to \text{Mod}^{\tau}_{\Omega C}(M,N)$ defined by
\begin{eqnarray*}
\delta(f)= \partial_{\otimes_{\tau}} \circ f - (-1)^{|f|} f \circ \partial_{\otimes_{\tau}},
\end{eqnarray*}
where $\partial_{\otimes_{\tau}}$ denotes the differential of the twisted tensor product construction associated to the universal twisting cochain $\tau: C \to \Omega C$, as defined by equation \ref{twisteddiff}. Composition of morphisms is defined in the obvious way.
\end{definition}

Since $N \otimes C$ is a free right $C$-comodule, $\text{Mod}^{\tau}_{\Omega C}(M,N)$ is isomorphic to the chain complex generated by graded linear maps 
\begin{eqnarray*}
g: M \otimes C \to N
\end{eqnarray*}
with differential $\hat{\delta}$ defined by
\begin{equation} \label{deltahat}
\begin{split}
&\hat{\delta} (g)(m \otimes \sigma):= d_N g(m \otimes \sigma) -  (-1)^{|g|}g(d_M m \otimes \sigma) - (-1)^{|m| +|g|} g( m \otimes \partial(\sigma)) \\
&+ \sum_{(\sigma)} (-1)^{|m| +|g|} g ( (m \cdot \tau(\sigma') )\otimes \sigma'') + \sum_{(\sigma)} (-1)^{|g|+|m| + |\sigma'|} g(m \otimes \sigma') \cdot \tau(\sigma''),
\end{split}
\end{equation}
where we have written $\Delta(\sigma)=\sum_{(\sigma)} \sigma' \otimes \sigma''$ for the coproduct $\Delta: C \to C \otimes C$.

\section{Equivalence of $\infty$-categories $Loc_X^{\infty}$, $N_{dg} \text{Mod}^{\infty}_{\Omega C}$, and $N_{dg} \text{Mod}^{\tau}_{\Omega C}$}

In the previous sections we have introduced three quasi-categories: $Loc_X^{\infty}$, $N_{dg} \text{Mod}^{\infty}_{\Omega C}$, and $N_{dg} \text{Mod}^{\tau}_{\Omega C}$. In this section we argue that all of these are weakly equivalent as quasi-categories, i.e. equivalent as $\infty$-categories. We do this in the following two subsections

\subsection{Equivalence between $N_{dg}  \text{Mod}^{\infty}_{\Omega C} $ and $N_{dg} \text{Mod}^{\tau}_{\Omega C}$}\
\\

We first show that the two dg categories $\text{Mod}^{\tau}_{\Omega C}$ and $\text{Mod}^{\infty}_{\Omega C}$ are weakly equivalent.

\begin{proposition}  \label{bar complex} For any conilpotent dg coalgebra $C$ and any right dg $\Omega C$-module $M$ there is a natural quasi-isomorphism of chain complexes 
\begin{equation}
\phi:  (M \otimes B\Omega C, b_M) \to (M \otimes C, \partial_{\otimes_{\tau}})  .
\end{equation}
\end{proposition}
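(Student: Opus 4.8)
The plan is to realize $\phi$ as the map $M\otimes B\Omega C \to M\otimes C$ induced on the tensor factors by the counit-type map $B\Omega C \to C$ of Koszul duality, and then to exhibit $\phi$ as a morphism of filtered complexes whose associated graded is a quasi-isomorphism. Recall from section 2.6 that for a conilpotent dg coalgebra $C$ there is a quasi-isomorphism of dg coalgebras $\rho: C \to B\Omega C$; dually there is a natural dg coalgebra map $\psi: B\Omega C \to C$ which on $s\overline{\Omega C}$ restricts to the projection $s\overline{\Omega C} \twoheadrightarrow s(s^{-1}\overline{C}) \cong \overline{C}$ onto the length-one part and kills all higher bar-length summands; this is the chain-level incarnation of the universal twisting cochain $\tau$. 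First I would define $\phi := \mathrm{id}_M \otimes \psi$ on the underlying graded modules and check directly that $\phi\circ b_M = \partial_{\otimes_\tau}\circ \phi$: the $d_M$-term and the internal-differential term match tautologically, the bar-differentials $d_1, d_2$ on $B\Omega C$ collapse under $\psi$ precisely to the $d_C$-term plus the $\tau$-twist term in equation \ref{twisteddiff}, using that $\psi$ is a coalgebra map so it intertwines the deconcatenation coproduct with $\Delta$. This is the bookkeeping part; the Koszul signs follow the convention fixed in the preliminaries.

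The substantive step is the quasi-isomorphism claim. I would filter both sides by a suitable weight: on $M\otimes C$ filter by the (dual) conilpotency filtration of $C$ — equivalently, $F_p = M\otimes C^{\leq p}$ where $C^{\leq p}$ is the $p$-th stage of the filtration coming from powers of the reduced coproduct $\Delta'$ — and on $M\otimes B\Omega C$ filter compatibly so that $\phi$ is filtration-preserving. The twist term $m\otimes c \mapsto (m\cdot\tau(c'))\otimes c''$ strictly decreases the $C$-weight because $\tau$ lands in $\overline{C}$ via the reduced coproduct, so on the associated graded the differential $\partial_{\otimes_\tau}$ reduces to $d_M\otimes\mathrm{id} \pm \mathrm{id}\otimes d_C$; similarly the associated graded of $b_M$ on $M\otimes B\Omega C$ reduces to $d_M\otimes \mathrm{id} \pm \mathrm{id}\otimes d_{B\Omega C}$ (the module-action and product terms in $b_M$ also drop weight). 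Then $\mathrm{gr}(\phi) = \mathrm{id}_M \otimes \mathrm{gr}(\psi)$, and since $\psi$ (being a one-sided inverse to $\rho$ up to the standard contracting homotopy of $B\Omega C$) is a quasi-isomorphism with everything $\mathbf{k}$-free, $\mathrm{id}_M\otimes\mathrm{gr}(\psi)$ is a quasi-isomorphism by the Künneth theorem over the field $\mathbf{k}$. A standard convergence argument for the associated spectral sequences — the filtration is exhaustive and bounded below in each internal degree because $C$ is conilpotent and everything is non-negatively graded — then gives that $\phi$ itself is a quasi-isomorphism.

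The main obstacle I anticipate is not the algebra of $\psi$ but verifying that the filtrations are set up so that both spectral sequences converge and the comparison is legitimate: one must be careful that the conilpotency filtration interacts well with the internal grading, so that in each total degree only finitely many filtration stages are relevant, and that $\phi$ genuinely preserves the chosen filtration (the bar-length filtration on $B\Omega C$ is the natural candidate, but one should confirm $\psi$ sends bar-length $\leq p$ into $C^{\leq p}$). Once that is pinned down, naturality in $M$ is automatic since every map in sight is built from $\mathrm{id}_M$ tensored with a fixed natural transformation of coalgebras, and naturality in $C$ follows from naturality of $\rho$, $\psi$, and $\tau$.
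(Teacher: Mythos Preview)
Your proposed map $\phi = \mathrm{id}_M \otimes \psi$ is not a chain map, so the argument does not get off the ground. The projection $\psi: B\Omega C \to C$ you describe (kill bar-length $\geq 2$; on bar-length one project $s\overline{\Omega C}$ onto cobar-length one) is a chain map but is \emph{not} a dg coalgebra map: on $\{[c]\}$ the deconcatenation coproduct gives only $1 \otimes \{[c]\} + \{[c]\} \otimes 1$, whereas $\Delta_C(c)$ contains the additional term $\Delta'(c)$. Equivalently, for $\mathrm{id}_M \otimes \psi$ to intertwine the two twisted differentials one would need $\tau \circ \psi$ to agree with the universal twisting cochain $\tau_1: B\Omega C \to \Omega C$; but $\tau_1(\{[c_1|\cdots|c_k]\}) = [c_1|\cdots|c_k]$ while $\tau\bigl(\psi(\{[c_1|\cdots|c_k]\})\bigr) = 0$ for $k \geq 2$. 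Concretely, take $m \otimes \{[c_1|c_2]\}$: your $\phi$ sends it to $0$, so $\partial_{\otimes_\tau} \circ \phi$ vanishes on it, yet $b_M$ contains the action term $\pm (m \cdot [c_1|c_2]) \otimes 1_{B\Omega C}$, which your $\phi$ sends to the nonzero element $\pm (m \cdot [c_1|c_2]) \otimes 1_C$. So $\phi \circ b_M \neq \partial_{\otimes_\tau} \circ \phi$.

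The paper's $\phi$ is genuinely different and cannot be written as $\mathrm{id}_M$ tensored with a map of coalgebras: on bar-length one it sends $m \otimes \{[c_1|\cdots|c_k]\}$ to $(m \cdot [c_1|\cdots|c_{k-1}]) \otimes c_k$, using the $\Omega C$-action on $M$ in an essential way. The quasi-isomorphism is then established not by a filtration argument but by exhibiting $\mathrm{id}_M \otimes \rho_C$ as a right inverse and contracting $\ker\phi$ directly via an explicit operator $h$ (peel off the first cobar-letter of the first bar-letter) satisfying $(b_M h + h b_M - \mathrm{id})^{n_x} = 0$ for each $x$, the nilpotence coming from conilpotency of $C$. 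Your spectral-sequence strategy could plausibly be adapted once the correct $\phi$ is in hand, but the map you wrote down is not the one that works.
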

\begin{proof}
Define $\phi:  (M \otimes B\Omega C, b_M) \to (M \otimes C, \partial_{\otimes_{\tau}})$ by setting
$\phi(m \otimes \{ a_1 | ... |a_n\} ) =0$ if $n>1$, and if $n=1$ with $a_1=[c_1| ... |c_k]$ let
\[
\phi(m \otimes \{ [c_1 | ... |c_k] \} )=\left\{
  \begin{array}{lll}
    c_1,                                              &  k=1,\\
  m \cdot [c_1 | ... |c_{k-1} ] \otimes c_k, & k>1;
  \end{array}
\right.
\]
It is an easy computation to check that $\phi$ is a chain map. Moreover, $\phi$ is surjective with right inverse given by the chain map $id \otimes \rho_C: (M \otimes C, \partial_{\otimes_{\tau}}) \to  (M \otimes B\Omega C, b_M)$ where $\rho_C: C \to B\Omega C$ is the dg coalgebra map defined by
\begin{equation}
\rho_C(c)= \{ [c ] \} + \sum_{(c)} \{ [c' ] | [c''] \} + \sum_{(c)} \{ [c' ] | [c'' ] | [c'''] \} + ... \text{  , }
\end{equation}
and the number of prime subscripts denotes the number of iterated applications of $\Delta: C \to C \otimes C$; namely, $((\text{id}_C \otimes \Delta) \circ \Delta ) (c) = \sum_{(c)} c' \otimes c'' \otimes c'''$, $((\text{id}_C \otimes \text{id}_C \otimes \Delta) \circ (\text{id}_C \otimes \Delta) \circ \Delta)(c) = \sum_{(c)} c' \otimes c'' \otimes c''' \otimes c''''$, and so on. Note that $\rho_C$ is well defined by the conilpotency of the coalgebra $C$.

We argue that $(\text{ker } \phi, b_M)$ is a contractible sub complex in order to conclude that $\phi$ is a quasi-isomorphism. In fact, define $h: \text{ker } \phi \to \text{ker } \phi$ on any $m  \otimes \{a_1 | ... |a_n \} \in \text{ker } \phi$ with $a_1=[c_1 | ... |c_k] \in \Omega C$ by 

\[
h(m \otimes \{ [c_1| ... |c_k] | a_2 |  ... | a_n \} )=\left\{
  \begin{array}{lll}
    0,                                              &  k=1,\\
 \{ [c_1 ] | [c_2 | ... |c_k] | a_2 | ... |a_n \} & k>1;
  \end{array}
\right.
\]
The conilpotency of $C$ yields that for any $x \in \text{ker } \phi$ there exists a non-negative integer $n_x$ such that $(b_M \circ h + h \circ b_M - id)^{n_x}=0$. This last equation implies that if $x \in \text{ker } \phi$ is a cycle then there exists some $y$ such that $x=b_M(y)$, as desired. 
\end{proof}

\begin{corollary}\label{equiv1}
For any conilpotent dg coalgebra $C$ the dg categories $\text{Mod}^{\tau}_{\Omega C}$ and $\text{Mod}^{\infty}_{\Omega C}$ are weakly equivalent.
\end{corollary}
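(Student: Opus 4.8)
The plan is to exhibit a single dg functor $\Psi\colon \text{Mod}^{\infty}_{\Omega C}\to\text{Mod}^{\tau}_{\Omega C}$ which is the identity on objects and a quasi-isomorphism on each morphism complex; by the definition of a weak equivalence of dg categories recalled in Section~2.4 this suffices, since then $ho(\Psi)$ is bijective on objects (both dg categories have the bounded below right dg $\Omega C$-modules as objects), hence essentially surjective, and it is full and faithful because the morphism complexes are quasi-isomorphic.

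I would work with the reduced descriptions of the morphism complexes. On one side, $\text{Mod}^{\infty}_{\Omega C}(M,N)\cong(\mathrm{Hom}_{\mathbf k}(M\otimes B\Omega C,N),\hat\delta)$ of \eqref{bar}, which is $\mathrm{Hom}_{\Omega C}(-,N)$ applied to the two-sided bar resolution $M\otimes B\Omega C\otimes\Omega C$ of $M$; on the other side, $\text{Mod}^{\tau}_{\Omega C}(M,N)\cong(\mathrm{Hom}_{\mathbf k}(M\otimes C,N),\hat\delta)$ of \eqref{deltahat}, which is $\mathrm{Hom}_{\Omega C}(-,N)$ applied to the $\tau$-twisted Koszul complex $M\otimes C\otimes\Omega C$ (whose reduction along $\Omega C\to\mathbf k$ is $(M\otimes C,\partial_{\otimes_{\tau}})$). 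I set $\Psi_{M,N}(f):=f\circ(\mathrm{id}_M\otimes\rho_C)$, precomposition with the coalgebra comparison $\rho_C\colon C\to B\Omega C$ of Section~2.6; note $\mathrm{id}_M\otimes\rho_C$ is exactly the section of $\phi$ produced inside the proof of Proposition~\ref{bar complex}. Compatibility of $\Psi$ with composition and units is a formal consequence of $\rho_C$ being a morphism of counital dg coalgebras: in the reduced model composition is $(g,f)\mapsto\bigl(m\otimes w\mapsto\sum g(f(m\otimes w')\otimes w'')\bigr)$, the sum running over the deconcatenation coproduct of $B\Omega C$ on the $\text{Mod}^{\infty}$ side and the Alexander--Whitney coproduct of $C$ on the $\text{Mod}^{\tau}$ side, so that $\Delta_{B\Omega C}\circ\rho_C=(\rho_C\otimes\rho_C)\circ\Delta_C$ and $\varepsilon_{B\Omega C}\circ\rho_C=\varepsilon_C$ give $\Psi(g\circ f)=\Psi(g)\circ\Psi(f)$ and $\Psi(\mathrm{id})=\mathrm{id}$. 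That $\Psi_{M,N}$ is a chain map amounts, for the terms of $\hat\delta$ coming from $d_N$, $d_M$, $\partial_C$ and the $\tau$-twist, to the assertion proved inside Proposition~\ref{bar complex} that $\mathrm{id}_M\otimes\rho_C$ intertwines $\partial_{\otimes_{\tau}}$ with $b_M$; for the remaining right-module-action term of $\hat\delta$ it amounts to the identity $\Delta_{B\Omega C}(\rho_C(c))\equiv\sum_{(c)}\rho_C(c')\otimes[c'']$ modulo terms whose second tensor factor has cobar length $\neq 1$ (here $[c'']=\tau(c'')$).

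It remains to check that each $\Psi_{M,N}$ is a quasi-isomorphism, which is where Proposition~\ref{bar complex} does the real work. The $\tau$-twisted complex $M\otimes C\otimes\Omega C$ is a free --- hence, everything in sight being bounded below, semiprojective --- right dg $\Omega C$-module resolution of $M$: its homology is $M$ because the universal twisting cochain is acyclic, i.e.\ because $\rho\colon C\xrightarrow{\ \sim\ }B\Omega C$ is a quasi-isomorphism, a fact one can also extract from Proposition~\ref{bar complex} by taking $M=\Omega C$. Under the identifications above, $\Psi_{M,N}$ is $\mathrm{Hom}_{\Omega C}(-,N)$ applied to the $\Omega C$-linear comparison $\mathrm{id}_M\otimes\rho_C\otimes\mathrm{id}_{\Omega C}\colon M\otimes C\otimes\Omega C\to M\otimes B\Omega C\otimes\Omega C$, a morphism over $\mathrm{id}_M$ between two semiprojective resolutions of $M$, hence a quasi-isomorphism; since $\mathbf k$ is a field, $\mathrm{Hom}_{\Omega C}(-,N)$ carries it to a quasi-isomorphism. (Alternatively: filter both morphism complexes by cobar word-length; the associated graded of $\Psi_{M,N}$ is precomposition with the underlying untwisted coalgebra quasi-isomorphism $\rho_C\colon C\xrightarrow{\ \sim\ }B\Omega C$ inside $\mathrm{Hom}_{\mathbf k}(-,N)$, a quasi-isomorphism over the field $\mathbf k$, and the filtrations are exhaustive and bounded below, so a spectral sequence comparison concludes.) Thus $\Psi$ is a weak equivalence of dg categories.

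The main obstacle is bookkeeping rather than conceptual. One must match the explicit differentials \eqref{bar} and \eqref{deltahat}, with all Koszul signs, through precomposition with $\rho_C$, and for the quasi-isomorphism step one must set up the identification of the two morphism complexes with $\mathrm{Hom}_{\Omega C}(-,N)$ of the bar and Koszul resolutions precisely enough that Proposition~\ref{bar complex} applies. The tempting shortcut --- deducing the quasi-isomorphism directly from the homotopy equivalence $\phi$ of Proposition~\ref{bar complex} by pre- and post-composition --- does not work on the nose, because $\phi$ is not a comodule map and the module-action term of $\hat\delta$ obstructs the naive homotopy; this is exactly why one passes to the resolution picture.
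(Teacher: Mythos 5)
Your functor $\Psi_{M,N}(f)=f\circ(\mathrm{id}_M\otimes\rho_C)$ is precisely the functor $F$ used in the paper's proof, identity on objects and precomposition with $\mathrm{id}\otimes\rho_C$ on morphism complexes. Where you genuinely diverge is in showing that $\Psi_{M,N}$ is a quasi-isomorphism. The paper stays at the chain level: it writes down an explicit right inverse $G$ (so $F$ is split surjective), identifies $\ker F$ with linear maps out of $\ker\phi$ using the splitting from Proposition~\ref{bar complex}, and contracts $\ker F$ using $\tilde h(f)=f\circ h$, where $h$ is the locally nilpotent contracting homotopy of $\ker\phi$ built in that proposition; the only input is conilpotency of $C$. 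You instead reinterpret both morphism complexes as $\mathrm{Hom}_{\Omega C}(-,N)$ applied to the two-sided bar resolution and the $\tau$-twisted Koszul complex, observe that $\mathrm{id}_M\otimes\rho_C\otimes\mathrm{id}_{\Omega C}$ is a comparison of bounded-below free (hence semiprojective) resolutions of $M$, and invoke the standard fact that $\mathrm{Hom}_{\Omega C}(-,N)$ turns such a comparison into a quasi-isomorphism (or, alternatively, a word-length spectral sequence). Both routes are correct. The paper's is elementary and self-contained but requires tracking how $\hat\delta$, including its module-action term, restricts to $\ker F$; yours is conceptually cleaner --- it makes transparent that both morphism complexes compute $\mathrm{RHom}_{\Omega C}(M,N)$ --- at the cost of more homological-algebra machinery. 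One small inaccuracy: the claim that the quasi-isomorphism $\rho_C\colon C\to B\Omega C$ ``can be extracted from Proposition~\ref{bar complex} by taking $M=\Omega C$'' does not quite follow, since $\phi$ for $M=\Omega C$ is not of the form $\mathrm{id}\otimes\rho_C$; it is cleaner simply to cite the Husemoller--Moore--Stasheff result quoted in Section~2.6. The functoriality and chain-map verifications you sketch are at the same level of detail as the paper, which asserts them without proof, so there is no discrepancy there.
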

\begin{proof}
Recall that the objects of the two dg categories $\text{Mod}^{\tau}_{\Omega C}$ and $\text{Mod}^{\infty}_{\Omega C}$ are the same: right dg $\Omega C$-modules. Consider the functor $F: \text{Mod}^{\infty}_{\Omega C} \to \text{Mod}^{\tau}_{\Omega C}$ which is identity on objects and on morphisms the map $F: \text{Mod}^{\infty}_{\Omega C}(M,N) \to \text{Mod}^{\tau}_{\Omega C}(M,N)$
is defined by sending a a morphism $(f: M \otimes B\Omega C \to N) \in \text{Mod}^{\infty}_{\Omega C}(M,N) $ to the composition
\begin{equation}
F(f): M \otimes C \xrightarrow{ id \otimes \rho_C} M \otimes B\Omega C \xrightarrow{f} N. 
\end{equation}
The map $F: \text{Mod}^{\infty}_{\Omega C}(M,N) \to \text{Mod}^{\tau}_{\Omega C}(M,N)$ is a chain map. Moreover, $F$ is surjective and a right inverse is given by the chain map $G: \text{Mod}^{\tau}_{\Omega C}(M,N) \to \text{Mod}^{\infty}_{\Omega C}(M,N)$ which sends a morphism $(g: M \otimes C \to N) \in \text{Mod}^{\tau}_{\Omega C}(M,N) $ to the map $G(g): M \otimes B \Omega C \to N$ defined by $G(g)(m \otimes \{ a_1 | ... | a_n \})=0$ if $n>1$ and if $n=1$ with $a_1=[c_1|...|c_k]$ 
\begin{equation}
G(g)(m \otimes \{ [c_1|...|c_k] \} ) := \sum_{i=0}^{k} g(m \cdot [c_1|...|c_i] \otimes c_{i+1} ) \cdot [c_{i+2} | ... | c_k].
\end{equation}
We argue that the kernel of $F: \text{Mod}^{\infty}_{\Omega C}(M,N) \to \text{Mod}^{\tau}_{\Omega C}(M,N)$ is contractible. Note that $\text{ker } F$ is isomorphic to the complex generated by linear maps 
\begin{equation*}
f: \text{coker} (id \otimes \rho_C ) \to N 
\end{equation*}
with differential induced by $\hat{\delta}$ as defined in \ref{bar}. By Proposition \ref{bar complex} (and its proof) there is an isomorphism $\text{coker} (id \otimes \rho_C ) \cong \text{ker } \phi$, so we have an isomorphism of complexes
\begin{equation} \label{iso}
\text{ker } F \cong(  \mathbf{k}\{f: \text{ker } \phi \to N \} , \hat{\delta} ),
\end{equation}
where $\mathbf{k}\{f: \text{ker } \phi \to N \}$ denotes the $\mathbf{k}$-vector space generated by linear maps $f: \text{ker } \phi \to N$. Define $\tilde{h}: (  \mathbf{k}\{f: \text{ker } \phi \to N \} , \hat{\delta} ) \to (  \mathbf{k}\{f: \text{ker } \phi \to N \} , \hat{\delta} )$ on any generator $f$ by $\tilde{h}(f)= f \circ h$ where $h: \text{ker } \phi \to \text{ker } \phi$ is defined in the proof of Proposition \ref{bar complex}. For all  $f \in(  \mathbf{k}\{f: \text{ker } \phi \to N \} , \hat{\delta} )$ there exists a non-negative integer $n_f$ such that $(\tilde{h} \circ \hat{\delta} + \hat{\delta} \circ \tilde{h} - id )^{n_f}=0$. As in the proof of Proposition \ref{bar complex}, this implies that $(  \mathbf{k}\{f: \text{ker } \phi \to N \} , \hat{\delta} )$ is contractible. By \ref{iso}, $\text{ker } F$ is contractible as well. 
\end{proof}

Since the dg nerve functor sends weak equivalences of dg categories to weak equivalences of quasi-categories ( Proposition 1.3.1.20 of \cite{Lu11})  we may deduce directly from Corollary \ref{equiv1} the following

\begin{corollary} For any conilpotent dg coalgebra $C$ the quasi-categories $N_{dg}  \text{Mod}^{\infty}_{\Omega C} $ and $N_{dg} \text{Mod}^{\tau}_{\Omega C}$ are weakly equivalent. 
\end{corollary}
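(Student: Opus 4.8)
The plan is to deduce this corollary as an immediate formal consequence of Corollary~\ref{equiv1} together with the general fact, cited earlier in the excerpt as Proposition~1.3.1.20 of~\cite{Lu11}, that the dg nerve functor $N_{dg}$ carries weak equivalences of dg categories to weak equivalences of quasi-categories. Concretely, Corollary~\ref{equiv1} produces a dg functor $F\colon \text{Mod}^{\infty}_{\Omega C} \to \text{Mod}^{\tau}_{\Omega C}$ which is the identity on objects and which induces a quasi-isomorphism on every mapping complex $\text{Mod}^{\infty}_{\Omega C}(M,N) \to \text{Mod}^{\tau}_{\Omega C}(M,N)$; since it is the identity on objects it trivially induces an equivalence of homotopy categories. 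Hence $F$ is a weak equivalence of dg categories in the sense recalled in section~2.4, and applying $N_{dg}$ gives the desired weak equivalence $N_{dg}F\colon N_{dg}\text{Mod}^{\infty}_{\Omega C} \to N_{dg}\text{Mod}^{\tau}_{\Omega C}$ of quasi-categories.

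So the proof is essentially a one-line invocation: \emph{By Corollary~\ref{equiv1} the dg functor $F$ is a weak equivalence of dg categories, and $N_{dg}$ preserves weak equivalences (Proposition~1.3.1.20 of~\cite{Lu11}), so $N_{dg}F$ is a weak equivalence of quasi-categories.} The only thing worth spelling out is why being the identity on objects suffices for the ``essential surjectivity / equivalence of homotopy categories'' half of the definition of a weak equivalence of dg categories — but this is immediate, since the induced functor $ho(\text{Mod}^{\infty}_{\Omega C}) \to ho(\text{Mod}^{\tau}_{\Omega C})$ is a bijection on objects and, by the mapping-complex quasi-isomorphisms just established, an isomorphism on each $\text{Hom}$-set (recall $ho(\mathcal{C})(x,y) = H_0(\mathcal{C}(x,y))$), hence an isomorphism of categories a fortiori an equivalence.

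There is no real obstacle here; all the content was already absorbed into Corollary~\ref{equiv1} and its proof (the contractibility of $\ker F$ via the explicit homotopy $\tilde h$), and into the general preservation statement for $N_{dg}$. If anything, the ``hard part'' — really just a bookkeeping point — would be making sure the reader recognizes that the preservation statement applies: one should note that $\text{Mod}^{\infty}_{\Omega C}$ and $\text{Mod}^{\tau}_{\Omega C}$ are honest dg categories over the field $\mathbf{k}$ (their mapping objects are genuine chain complexes, as set up in sections~5.2 and~5.3), so Lurie's lemma is applicable without fuss. Accordingly, I would keep the proof as short as the authors have it, simply citing Corollary~\ref{equiv1} and Proposition~1.3.1.20 of~\cite{Lu11}.
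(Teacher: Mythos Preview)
Your proposal is correct and matches the paper's own argument exactly: the paper simply states that since $N_{dg}$ sends weak equivalences of dg categories to weak equivalences of quasi-categories (Proposition~1.3.1.20 of \cite{Lu11}), the result follows directly from Corollary~\ref{equiv1}. Your additional remarks about why identity-on-objects yields an equivalence of homotopy categories are a harmless elaboration of what the paper leaves implicit.
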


As before, let $(X,b)$ a pointed path-connected space and $C=(C^N_*(\text{Sing}(X,b); \mathbf{k}), \partial, \Delta)$. 

\subsection{Equivalence between $Loc^{\infty}_X$ and $N_{dg}\text{Mod}^{\tau}_{\Omega C}$}\
\\

We first construct a functor of quasi-categories
\begin{equation*}
\theta: Loc^{\infty}_X \to N_{dg}\text{Mod}^{\tau}_{\Omega C}.
\end{equation*}
From the adjunction $(\Lambda, N_{dg})$, we obtain the following natural bijections for the sets of $n$-simplices:
\begin{equation*}
(Loc^{\infty}_X)_n \cong Set_{\Delta} (\Delta^n \times \text{Sing}(X,b), N_{dg}Ch_{\mathbf{k}}) \cong dgCat_{\mathbf{k}} (\Lambda(\Delta^n \times \text{Sing}(X,b)), Ch_{\mathbf{k}})
\end{equation*}
and
\begin{equation*}
(N_{dg}\text{Mod}^{\tau}_{\Omega C})_n \cong Set_{\Delta}(\Delta^n, N_{dg}\text{Mod}^{\tau}_{\Omega C}) \cong dgCat_{\mathbf{k}}(\Lambda(\Delta^n), \text{Mod}^{\tau}_{\Omega C}).
\end{equation*}
Hence, constructing $\theta$ is equivalent to constructing set maps 
\begin{equation*}
\theta: dgCat_{\mathbf{k}} (\Lambda(\Delta^n \times \text{Sing}(X,b)), Ch_{\mathbf{k}}) \to dgCat_{\mathbf{k}}(\Lambda(\Delta^n), \text{Mod}^{\tau}_{\Omega C})
\end{equation*}
for all integers $n \geq 0$, which are compatible with maps $[n] \to [m]$ in the ordinal category $\mathbf{\Delta}$. 

 Given any dg functor $F \in dgCat_{\mathbf{k}} (\Lambda(\Delta^n \times \text{Sing}(X,b)), Ch_{\mathbf{k}})$ define a dg functor $\theta(F): \Lambda(\Delta^n) \to \text{Mod}^{\tau}_{\Omega C}$ on objects $i \in \Lambda(\Delta^n)$ by letting $\theta(F)(i)$ be the chain complex $F(i,b)$ equipped with the right dg $\Omega C$-module given by pre-composing the map induced by $F$
 \begin{equation*}
F: \Lambda(\{i \} \times \text{Sing}(X,b))(b,b) \to Ch_{\mathbf{k}}(F(i,b), F(i,b))
\end{equation*}
with the isomorphism of dg algebras
\begin{equation*}
\Omega C \xrightarrow{\varphi} \Lambda(\text{Sing}(X,b))(b,b) \cong \Lambda(\{i \} \times \text{Sing}(X,b))(b,b)
\end{equation*}
 given in Theorem \ref{cobar}.  For simplicity, denote the right dg $\Omega C$-module $\theta(F)(i)$ by $F_i$. Given any two objects $i, j \in \Lambda(\Delta^n)$  we must define a chain map
\begin{eqnarray*}
\theta(F): \Lambda(\Delta^n)(i,j) \to \text{Mod}^{\tau}_{\Omega C}(F_i, F_j).
\end{eqnarray*}
First we introduce some notation. For any simplicial set $S$ and $i,j \in S_0$ let
\begin{equation} \label{EZ}
EZ: \Lambda(S)(i,j) \otimes C \to \Lambda( S \times \text{Sing}(X,b))( (i,b), (j,b))
\end{equation}
be the graded linear map defined on any generator $t \otimes \sigma \in \Lambda(S)(i,j) \otimes C$, where $(t: T \to S) \in  \Lambda(S)(i,j)$ and $(\sigma: \Delta^l \to \text{Sing}(X,b)) \in C$, 
by letting $EZ(t \otimes \sigma)$ be the (signed) sum of all generators in $\Lambda(S \times \text{Sing}(X,b)  )((i,b), (j,b))$ of degree $\text{dim}(T)+ \text{deg}(\sigma)$ with beads inside the sub-simplicial set $(t \times \sigma)(T \times \Delta^l) \subset S\times \text{Sing}(X,b)$.\footnote{$EZ$ stands for Eilenberg-Zilber since it extends the classical map, $C_*(X) \otimes C_*(Y) \to C_*(X \times Y)$, when both necklaces have one bead and the degree is correctly shifted.} In other words, if $T= \Delta^{n_1} \vee ... \vee \Delta^{n_k}$ then $EZ$ is given by considering all top dimensional necklaces inside $( \Delta^{n_1} \vee ... \vee \Delta^{n_k}) \times \Delta^l$ from vertex $(\alpha_T, 0)$ to vertex $(\omega_T, l)$ and mapping them to $S \times \text{Sing}(X,b)$ via $t \times \sigma$.

We now define $\theta(F)$ by applying the map $EZ$ when $S=\Delta^n$ as follows: for any generator $t \in \Lambda(\Delta^n)(i,j)$ as above let $\theta(F)(t) \in \text{Mod}^{\tau}_{\Omega C}( F_i, F_j )$ be the map of right $C$-comodules determined by the linear map $\theta(F)(t): F_i \otimes C \to F_j$ given by 
\begin{equation}
\theta(F)(t)(m \otimes \sigma) := (-1)^{|\sigma||m|} F( EZ (t \otimes \sigma ) )(m)
\end{equation}
for any generators $m \in F_i$ and $\sigma \in C$.  
\begin{proposition} The linear map $\theta(F): \Lambda(\Delta^n)(i,j) \to \text{Mod}^{\tau}_{\Omega C}(F_i, F_j)$ is a chain map.
\end{proposition}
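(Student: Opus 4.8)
The plan is to verify directly that $\theta(F)$ intertwines the differentials, i.e. that
$\theta(F)(d_{\Lambda} t) = \hat\delta\bigl(\theta(F)(t)\bigr)$ where the right-hand side is the differential of $\mathrm{Mod}^{\tau}_{\Omega C}(F_i,F_j)$ given by equation \ref{deltahat}. Both sides are linear maps $F_i \otimes C \to F_j$, so it suffices to evaluate them on a generator $m \otimes \sigma$ with $m \in F_i$ and $\sigma \in C_l$ a simplex of $\mathrm{Sing}(X,b)$. The essential tool is the compatibility of the map $EZ$ from \ref{EZ} with the differentials involved: since $F$ is a dg functor, $F(d_\Lambda(-))$ equals $\delta(F(-))$ on morphism complexes, so the computation reduces to understanding how $d_\Lambda$ acting on the necklace $EZ(t\otimes\sigma)$ in $\Lambda(\Delta^n\times\mathrm{Sing}(X,b))((i,b),(j,b))$ decomposes. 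The key algebraic identity I expect is an Eilenberg–Zilber-type formula
\[
d_\Lambda \bigl(EZ(t\otimes\sigma)\bigr) = EZ\bigl(d_\Lambda t \otimes \sigma\bigr) \pm EZ\bigl(t \otimes \partial\sigma\bigr) \pm (\text{boundary terms involving }\Delta\sigma),
\]
where the boundary terms are exactly those that, after applying $F$ and recalling the $\Omega C$-module structure on $F_i$ and $F_j$ defined via $\varphi$ from Theorem \ref{cobar}, produce the $m\cdot\tau(\sigma')$ and $\,\cdot\,\tau(\sigma'')$ summands in \ref{deltahat}.

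Concretely I would proceed as follows. First, unwind the definitions: write out $\theta(F)(d_\Lambda t)(m\otimes\sigma)$ using $d_\Lambda = -\partial' + \Delta'$ on the necklace $t$, and separately write out $\hat\delta(\theta(F)(t))(m\otimes\sigma)$ term by term from \ref{deltahat} — there are five groups of terms ($d_N g$, $g(d_M m \otimes \sigma)$, $g(m\otimes\partial\sigma)$, the $\tau(\sigma')$ term, and the $\tau(\sigma'')$ term). Second, match the internal-differential terms: $d_N g$ corresponds to post-composition with $d_{F_j}$ and $g(d_M m\otimes\sigma)$ to pre-composition with $d_{F_i}$; these are accounted for because $F$ is a chain map on $\Lambda(\{i\}\times\mathrm{Sing}(X,b))(b,b)$-morphisms and because $EZ(t\otimes\sigma)$ has the module actions built in. Third, match the $\partial'$-part of $d_\Lambda t$ with the term $g(m\otimes\partial(\sigma))$ together with the ``middle face'' contributions — here I need that the top-dimensional necklaces in $(\Delta^{n_1}\vee\cdots\vee\Delta^{n_k})\times\Delta^l$ have boundary cells that split into those coming from faces of the $\Delta^{n_i}$ and those coming from faces of $\Delta^l$, with the first and last faces of $\Delta^l$ (the $d_0$ and $d_l$) being precisely what the universal twisting cochain $\tau$ records. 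Fourth, match the $\Delta'$-part of $d_\Lambda t$ and the splitting-of-beads terms with the two $\tau$-terms in \ref{deltahat}, using that under $\varphi$ the element $\tau(\sigma') \in \Omega C$ acts on $F_i$ exactly as the length-one necklace on that simplex, up to the degenerate correction $c_x$ from Theorem \ref{cobar}. Finally, check that all Koszul signs (the $(-1)^{|\sigma||m|}$ twist in the definition of $\theta(F)$, the shift signs from $s^{-1}$, and the shuffle signs in $EZ$) conspire correctly; this sign bookkeeping is where I expect the real work to lie.

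The main obstacle is precisely this last point combined with establishing the Eilenberg–Zilber identity for $EZ$ in the needed form: one must pin down exactly which top-dimensional necklaces in $T\times\Delta^l$ appear, show that the cellular boundary operator $d_\Lambda$ on such a sum telescopes into the four families of terms above with no leftover, and track every sign through the shift conventions. A clean way to organize this is to first prove the key lemma $d_\Lambda\circ EZ = EZ\circ(d_\Lambda\otimes\mathrm{id}) \pm EZ\circ(\mathrm{id}\otimes\partial) + (\text{twisting terms})$ as an identity of maps into $\Lambda(\Delta^n\times\mathrm{Sing}(X,b))$, independently of $F$, and then simply apply the dg functor $F$ and read off \ref{deltahat}. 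If that lemma is granted, the proposition follows by a direct term-by-term comparison; I would present the argument in that two-step form, stating the combinatorial lemma about $EZ$ explicitly and deferring its routine (if tedious) verification.
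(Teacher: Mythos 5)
Your proposal matches the paper's proof: the paper's argument rests on exactly the Eilenberg--Zilber-type (``Maurer--Cartan type'') identity you propose as the key lemma, namely
$d_{\Lambda} EZ(t \otimes \sigma) = EZ(d_{\Lambda} t \otimes \sigma) + (-1)^{|t|} EZ(t \otimes \partial' \sigma) - \sum_{(\sigma)} [(i,\sigma') \,|\, EZ(t \otimes \sigma'')] + \sum_{(\sigma)} (-1)^{|t||\sigma'|}[EZ(t \otimes \sigma') \,|\, (j,\sigma'')]$,
after which one applies the dg functor $F$ and compares with $\hat{\delta}$ as you describe. The paper, like you, states this combinatorial identity without a detailed verification, so your two-step organization is the same as the published argument.
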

\begin{proof}This follows from the (Maurer-Cartan type) formula
\begin{equation} \label{dbullet}
\begin{split}
&d_{\Lambda}  EZ(t \otimes \sigma)  = EZ( d_{\Lambda} t \otimes \sigma ) + (-1)^{|t|} EZ( t \otimes \partial' \sigma) \\
& - \sum_{ (\sigma) }  [ (i,\sigma' ) | EZ (t \otimes \sigma'')] + \sum_{ (\sigma) }(-1)^{|t| |\sigma'|} [EZ (t \otimes \sigma') | (j,  \sigma'') ],
\end{split}
\end{equation}
where $\partial'(\sigma):= \sum_{i=1}^{|\sigma|-1} (-1)^i d^{i} (\sigma)$, $\Delta(s)= \sum_{ (\sigma) } \sigma' \otimes \sigma '' \in C \otimes C$, and $(i,\sigma')$ denotes the generator in $\Lambda(\Delta^n \times \text{Sing}(X,b) ) ( (i,b), (i,b) )$ represented by the necklace with a single bead $\{ i \} \times \sigma'$, and $(j, \sigma '')$ is defined similarly. In fact, for any $m \in F_i$, \ref{dbullet} implies
\begin{equation} \label{Fdbullet}
\begin{split}
&F (d_{\Lambda} ( EZ (t \otimes \sigma ) )) (m)=  F( EZ ( d_{\Lambda}t \otimes \sigma) ) (m) \pm F( EZ( t \otimes \partial' \sigma ) ) (m)
\\ & \pm \sum_{(\sigma)} F(  EZ ( t \otimes \sigma'' ) ) ( \tau(\sigma') \cdot m) \pm  \sum_{(\sigma)} \tau(\sigma'') \cdot F( EZ(t \otimes  \sigma' ))(m) 
\end{split}
\end{equation}
and since $F: \Lambda(\Delta^n \times \text{Sing}(X,b))( (i,b), (j,b)) \to Ch_{\mathbf{k}}(F_i, F_j)$ is a chain map  we have
 \begin{equation} \label{Fchainmap}
 F(d_{\Lambda}(EZ(t \otimes \sigma)))(m)= d_{F_i} ( F ( EZ(t \otimes \sigma)  )(m) ) \pm F( EZ(t \otimes \sigma) ) (d_{F_i}m).
 \end{equation}
Combining \ref{Fdbullet} and \ref{Fchainmap} we obtain $\hat{\delta} (\theta(F)(t)) = \theta(F)(d_{\Lambda} t)$, where $\hat{\delta}$ is the differential as defined in \ref{deltahat}, as desired. 
\end{proof}

Since the classical Eilenberg-Zilber map is natural with respect to simplicial set morphisms it follows that the map $EZ$ of \ref{EZ} is natural with respect to morphisms $[n] \to [m]$ in $\mathbf{\Delta}$. It follows that the maps $\{ \theta: (Loc^{\infty}_X)_n \to (N_{dg} \text{Mod}^{\tau}_{\Omega C})_n \}$ are compatible with morphisms in $\mathbf{\Delta}$ as well, so they define a map of simplicial sets $\theta: Loc^{\infty}_X \to N_{dg} \text{Mod}^{\tau}_{\Omega C}$. 
\\
\\
We now show $\theta$ induces a homotopy equivalence of Kan complexes at the level of mapping spaces by constructing an explicit homotopy inverse. It is enough to show that for any two $\infty$-local systems $P,Q \in (Loc^{\infty}_X)_0 \cong dgCat_{\mathbf{k}}(\Lambda ( \text{Sing}(X,b) ), Ch_{\mathbf{k}})$, the morphism $\theta$ defined in Step 1 induces a homotopy  equivalence between right morphism spaces
\begin{equation}\label{thetamappingspaces}
\theta: Hom^R_{Loc^{\infty}_X}(P,Q) \to Hom^R_{N_{dg}\text{Mod}^{\tau}_{\Omega C}}(\theta(P), \theta(Q)).
\end{equation}
Recall that $Hom^R_{Loc^{\infty}_X}(P,Q) $ and $Hom^R_{N_{dg}\text{Mod}^{\tau}_{\Omega C}}(\theta(P), \theta(Q))$ are Kan complexes with sets of simplices given by 
\begin{equation*}
Hom^R_{Loc^{\infty}_X}(P,Q)_n\cong \{ f \in Set_{\Delta}(J^n, Loc^{\infty}_X) : f(x)= P, f(y)= Q \}
\end{equation*}
and
\begin{equation*}
Hom^R_{N_{dg}\text{Mod}^{\tau}_{\Omega C}}(\theta(P), \theta(Q))_n \cong \{ g \in Set_{\Delta}(J^n , N_{dg}\text{Mod}^{\tau}_{\Omega C}) : g(x)= \theta(P), g(y)=\theta(Q)\},
\end{equation*}
where $J^n$ is the simplicial set with two vertices $x$ and $y$ defined in section 2.5. Using the adjunction $(\Lambda, N_{dg})$ once again, the sets $Hom^R_{Loc^{\infty}_X}(P,Q)_n $ and $Hom^R_{N_{dg}\text{Mod}^{\tau}_{\Omega C}}(\theta(P), \theta(Q))_n$  described above correspond to
\begin{equation} \label{F}
 \{ F \in dgCat_{\mathbf{k} } ( \Lambda(J^n \times \text{Sing}(X,b) ) , Ch_{\mathbf{k}} ) : F|_{\Lambda( \{x\} \times \text{Sing}(X,b) )} =P, F|_{\Lambda( \{y\} \times \text{Sing}(X,b) )})=Q\}
\end{equation}
and 
\begin{equation} \label{G}
 \{ G \in dgCat_{\mathbf{k} } ( \Lambda( J^n ), \text{Mod}^{\tau}_{\Omega C} ) : G(x) = \theta(P), G(y)= \theta(Q) \},
\end{equation}
respectively. Note that the generators of $\Lambda(J^n)(x,y)$ correspond to necklaces with a single bead so they are determined by generators in $C^N_{*>0}(J^n)$ with a degree shift of $-1$. We use these identifications to construct a morphism of simplicial sets 

\begin{equation}\label{psi}
\psi: Hom^R_{N_{dg}\text{Mod}^{\tau}_{\Omega C}}(\theta(P), \theta(Q)) \to Hom^R_{Loc^{\infty}_X}(P,Q)
\end{equation}
which we will show to be a homotopy inverse for $\theta: Hom^R_{Loc^{\infty}_X}(P,Q) \to Hom^R_{N_{dg}\text{Mod}^{\tau}_{\Omega C}}(\theta(P), \theta(Q)).$

Given any $G \in dgCat_{\mathbf{k} } ( \Lambda( J^n ), \text{Mod}^{\tau}_{\Omega C} )$ such that $G(x) = \theta(P)$ and $G(y)= \theta(Q)$ define the dg functor $\psi(G) \in dgCat_{\mathbf{k}}( \Lambda(J^n \times \text{Sing}(X,b)) , Ch_{\mathbf{k}})$ on objects by setting $\psi(G)(x,b)=P(b)$ and $\psi(G)(y,b)=Q(b)$.
We proceed by defining chain maps
\begin{equation} \label{psiG}
\psi(G): \Lambda(J^n \times \text{Sing}(X,b) ) (z, z') \to Ch_{\mathbf{k}}(\psi(G)(z), \psi(G)(z')).
\end{equation}

If  $z=z'=(x,b)$ then any necklace $s: T \to J^n \times \text{Sing}(X,b)$ representing a generator in $\Lambda(J^n \times \text{Sing}(X,b) ) (z, z')$ lies inside $ \{x \} \times \text{Sing}(X,b)$. In this case define  $\psi(G)$ via the composition
$$\Lambda(\{x \} \times \text{Sing}(X,b)((x,b), (x,b))\cong \Lambda(\text{Sing}(X,b))(b,b) \xrightarrow{P} Ch_{\mathbf{k}}(P(b), P(b)).$$
The case $z=z'=(y,b)$ is similar.

Suppose now that $z=(x,b)$ and $z'=(y,b)$. Then any generator $s \in  \Lambda( J^n  \times \text{Sing}(X,b) )(z,z')$ may be represented by a necklace $s: \Delta^{n_1} \vee ... \vee \Delta^{n_r} \to J^n \times \text{Sing}(X,b)$ such that there exists some $1 \leq p \leq r$ for which $s$ maps $\Delta^{n_1} \vee ... \vee \Delta^{n_{p-1}}$ into $\{x \} \times \text{Sing}(X,b)$, $s$ maps the first and last vertices of $\Delta^{n_p}$ to $(x,b)$ and $(y,b)$ respectively, and $s$ maps $\Delta^{n_{p+1}} \vee ... \vee \Delta^{n_r}$ into $\{y \} \times \text{Sing}(X,b)$. For any $1 \leq l < k \leq p$, write $s_{l,k}$ for the restriction of $s$ to $\Delta^{n_l} \vee \Delta^{n_{l+1}} \vee ... \vee \Delta^{n_{k}}$ and $s_p$ for the restriction of $s$ to the bead $\Delta^{n_p}$. Note that $s_{1,{p-1}}$ may be considered as a generator in $\Lambda( J^n \times \text{Sing}(X,b) )( (x,b), (x,b) )$. Similarly, $s_{p+1,r}$ may be considered as a generator in $\Lambda( J^n \times \text{Sing}(X,b) )( (y,b), (y,b) ).$ 

For any such $s \in  \Lambda( J^n  \times \text{Sing}(X,b) )(z,z')$ define \ref{psiG} by
\begin{equation}
\psi(G)(s) := \sum_{ (s_p)  }  \psi(G)(s_{p+1,r}) \circ G([s_p' ]) (\_  \otimes s_p'' ) \circ \psi(G)(s_{1,p-1}),
\end{equation}
where we have written $\Delta(s_p)= \sum_{ (s_p) } s_p' \otimes s_p'' \in C^N_*(J^n) \otimes C^N_*(\text{Sing}(X,b))$ for the Alexander Whitney map applied to $s_p: \Delta^{n_p} \to J^n \times \text{Sing}(X,b)$. Similar arguments to those in Step 1 verify that the map $\psi(G): \Lambda(J^n \times \text{Sing}(X,b) ) (z, z') \to Ch_{\mathbf{k}}(\psi(G)(z), \psi(G)(z'))$ is a chain map and that $\psi(G)$ is natural with respect to maps $[n] \to [m]$ in $\mathbf{\Delta}$. Therefore, the above data defines a morphism $\psi: Hom^R_{N_{dg}\text{Mod}^{\tau}_{\Omega C}}(\theta(P), \theta(Q)) \to Hom^R_{Loc^{\infty}_X}(P,Q)$ of simplicial sets. 

\begin{proposition} \label{hptyinverse}
For any $P, Q \in (Loc^{\infty})_0$, the map $$\psi: Hom^R_{N_{dg}\text{Mod}^{\tau}_{\Omega C}}(\theta(P), \theta(Q)) \to Hom^R_{Loc^{\infty}_X}(P,Q)$$ is a homotopy inverse for $\theta: Hom^R_{Loc^{\infty}_X}(P,Q) \to Hom^R_{N_{dg}\text{Mod}^{\tau}_{\Omega C}}(\theta(P), \theta(Q))$. 
\end{proposition}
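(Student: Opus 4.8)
The plan is to establish the two homotopy relations $\psi\circ\theta\simeq\mathrm{id}$ and $\theta\circ\psi\simeq\mathrm{id}$; together these say precisely that $\psi$ is a two-sided homotopy inverse of $\theta$. The guiding observation is that, on generators, $\theta(F)$ is obtained by precomposing the dg functor $F$ with the twisted Eilenberg--Zilber map $EZ$ of \ref{EZ}, while $\psi(G)$ is obtained by precomposing $G$ with the Alexander--Whitney decomposition $\Delta(s_p)=\sum_{(s_p)} s_p'\otimes s_p''$ used in its definition. Hence $\theta\circ\psi$ is governed by the composite ``$EZ$ then $AW$'' and $\psi\circ\theta$ by ``$AW$ then $EZ$'', and the whole statement is a repackaging, through the necklace/cobar description of $\Lambda$, of the classical facts that on normalized chains $AW\circ EZ=\mathrm{id}$ and $EZ\circ AW\simeq\mathrm{id}$.

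\emph{The relation $\theta\circ\psi=\mathrm{id}$.} First I would fix $G\in dgCat_{\mathbf{k}}(\Lambda(J^n),\text{Mod}^{\tau}_{\Omega C})$ with $G(x)=\theta(P)$ and $G(y)=\theta(Q)$. Both $G$ and $\theta(\psi(G))$ are dg functors out of $\Lambda(J^n)$, so it suffices to compare their values on objects (immediate, since $\psi(G)$ restricts to $P$ and $Q$ on the two copies of $\mathrm{Sing}(X,b)$) and on the bead generators $t\in\Lambda(J^n)(x,y)$. Unwinding the definitions gives $\theta(\psi(G))(t)(m\otimes\sigma)=\pm\,\psi(G)\big(EZ(t\otimes\sigma)\big)(m)$; expanding $\psi(G)$ on each necklace occurring in $EZ(t\otimes\sigma)$ by the Alexander--Whitney diagonal of its crossing bead produces exactly $AW$ applied to $EZ(t\otimes\sigma)$, so by $AW\circ EZ=\mathrm{id}$ (together with the normalization relations built into $\Lambda$, which kill the degenerate contributions) every term cancels except $t\otimes\sigma$, with the Koszul signs matching those introduced in \ref{EZ} and in the definition of $\psi$. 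Thus $\theta\circ\psi$ is the identity on $Hom^R_{N_{dg}\text{Mod}^{\tau}_{\Omega C}}(\theta(P),\theta(Q))$.

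\emph{The relation $\psi\circ\theta\simeq\mathrm{id}$.} By the same bookkeeping, $\psi(\theta(F))$ is $F$ precomposed with $EZ\circ AW$. I would then invoke the classical natural Eilenberg--Zilber homotopy $H$, a degree $+1$ operator on the normalized chains of a product of simplicial sets with $\partial\circ H+H\circ\partial=EZ\circ AW-\mathrm{id}$. Transporting $H$ through the necklace/cobar description gives a degree $+1$ operator satisfying a Maurer--Cartan--type identity of the same shape as \ref{dbullet}; feeding it into $F$ and using that $F$ is a chain map yields, for each $F$ as in \ref{F}, an element $\eta(F)$ with $\hat\delta(\eta(F))=\psi(\theta(F))-F$ in the relevant morphism complex, the assignment $F\mapsto\eta(F)$ being natural in the simplicial degree $n$. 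A natural chain homotopy of this form induces a simplicial homotopy in the dg nerve (the $1$-truncated version of the statement, used already via Proposition 1.3.1.20 of \cite{Lu11}, that $N_{dg}$ takes weak equivalences to weak equivalences), and carrying this over along the adjunction $(\Lambda,N_{dg})$ one obtains a map $\Delta^1\times Hom^R_{Loc^{\infty}_X}(P,Q)\to Hom^R_{Loc^{\infty}_X}(P,Q)$ exhibiting $\psi\circ\theta\simeq\mathrm{id}$.

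The hard part will be the combinatorial and sign bookkeeping: establishing the ``twisted'' forms of $AW\circ EZ=\mathrm{id}$ and of $\partial\circ H+H\circ\partial=EZ\circ AW-\mathrm{id}$ once they are wrapped in the necklace/cobar construction and composed with the dg functors $F$ and $G$ — in particular proving the $H$-analogue of \ref{dbullet} with all Koszul signs correct — and then checking that the resulting family of chain homotopies is sufficiently natural in the simplicial variable to glue into an honest simplicial homotopy of Kan complexes rather than merely a pointwise statement. The remaining steps are formal.
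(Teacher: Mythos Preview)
Your proposal is correct and follows essentially the same approach as the paper: both prove $\theta\circ\psi=\mathrm{id}$ via the identity $AW\circ EZ=\mathrm{id}$, and both prove $\psi\circ\theta\simeq\mathrm{id}$ by recognizing it as precomposition with an $EZ\circ AW$-type endomorphism of $\Lambda(J^n\times\text{Sing}(X,b))(z,z')$, obtaining a chain homotopy $H$ to the identity (the paper via acyclic models, you via transporting the classical Eilenberg--Zilber homotopy), and then promoting $F\circ H$ to a simplicial homotopy. The only difference is in this last promotion step: where you appeal abstractly to naturality and dg-nerve machinery, the paper writes down the simplicial homotopy explicitly using the standard prism decomposition of $\Delta^n\times\Delta^1$ into $(n+1)$-simplices $S_i$, setting $h(S^F_0)=F\circ H$ and $h(S^F_i)=0$ for $i\neq 0$.
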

\begin{proof} Note that for any $G \in Hom^R_{N_{dg} \text{Mod}^{\tau}_{\Omega C}}(\theta(P), \theta(Q))_n$, $t \in \Lambda(J^n)(x,y)$, $ \sigma \in C$, and $m \in \theta(P)$ we have
\begin{equation} \label{comp1}
\theta(\psi(G))(t)(m \otimes \sigma) = \sum_{ (EZ(t \otimes \sigma)) } (-1)^{\epsilon} G ( [EZ(t \otimes \sigma )' ] )(m \otimes EZ(t \otimes \sigma) ''),
\end{equation}
where $\epsilon = |\sigma||m| + |EZ(t \otimes \sigma)''||m|$. Since $\sum_{(EZ(t \otimes \sigma))} EZ(t \otimes \sigma)' \otimes EZ(t \otimes \sigma)'' = t \otimes \sigma$ it follows that equation \ref{comp1} equals  $G ( t )(m \otimes \sigma)$. Thus, $\theta \circ \psi= id$. 

We now argue for the existence of a chain homotopy $\psi \circ \theta \simeq id$. Let $F$ be any element of the set $Hom^R_{Loc^{\infty}_X}(P,Q)_n$ (identified with \ref{F}) and $s \in \Lambda(J^n \times \text{Sing}(X,b))(z,z')$ be any generator represented by a map  $s: \Delta^{n_1} \vee ... \vee \Delta^{n_r} \to J^n \times \text{Sing}(X,b) )$ which sends $\Delta^{n_1} \vee ... \vee \Delta^{n_{p-1}}$ into $\{x \} \times \text{Sing}(X,b)$, $s$ sends the first and last vertices of the $p$-th bead $\Delta^{n_p}$ to $(x,b)$ and $(y,b)$ respectively, and $s$ sends $\Delta^{n_{p+1}} \vee ... \vee \Delta^{n_r}$ into $\{y \} \times \text{Sing}(X,b)$. Then we have

\begin{equation} \label{comp2}
\psi (\theta(F) )(s)= \sum_{(s_p)} F( [s_{1,p-1} | EZ( [s_{p}'] \otimes s_p'') | s_{p+1,r}]).
\end{equation}

An acyclic models argument (similar to the proof of the classical Eilenberg-Zilber theorem) yields that the natural chain map $$\Lambda(J^n \times \text{Sing}(X,b))(z,z') \to \Lambda(J^n \times \text{Sing}(X,b))(z,z')$$ given by $$s \mapsto \sum_{(s_p)} [s_{1,p-1} | EZ( [s_{p}'] \otimes s_p'') | s_{p+1,r}]$$
is chain homotopic to the identity map via a chain homotopy $$H: \Lambda(J^n \times \text{Sing}(X,b))(z,z') \to \Lambda(J^n \times \text{Sing}(X,b))(z,z').$$

We now construct a simplicial homotopy 
 \begin{equation}
 h: Hom^R_{Loc^{\infty}_X}(P,Q) \times \Delta^1 \to Hom^R_{Loc^{\infty}_X}(P,Q)
 \end{equation} 
 yielding $ \psi \circ \theta \simeq id$. For any simplex $F \in Hom^R_{Loc^{\infty}_X}(P,Q)_n$ consider the prism $F \times \Delta^1= \bigcup_{i=0}^n S^F_i$ where each $S^F_i \in (Hom^R_{Loc^{\infty}_X}(P,Q) \times \Delta^1)_{n+1} $ is the image under $$F \times id: \Delta^n \times \Delta^1 \to (Hom^R_{Loc^{\infty}_X}(P,Q) \times \Delta^1)_{n+1}$$ of the $i$-th $(n+1)$-simplex $S_i \in (\Delta^n \times \Delta^1)_{n+1}$ in the usual subdivision of a prism into simplices. The homotopy $h$ is determined by defining
\begin{equation}
h (S^F_i ): \Lambda(J^{n} \times \text{Sing}(X,b)) (z,z')  \to Ch_{k}(F(z),F(z'))
\end{equation}
as $h (S^F_i)=0$ if $i \neq 0$ and $h (S^F_0)=  F \circ H$, where $F \circ H$ denotes the composition
$$ \Lambda(J^n \times \text{Sing}(X,b))(z,z') \xrightarrow{H}  \Lambda(J^n \times \text{Sing}(X,b))(z,z') \xrightarrow{F} Ch_{\mathbf{k}}(F(z), F(z')).$$
\end{proof}

\begin{theorem}\label{main} The functor $\theta: Loc^{\infty}_X \to N_{dg}\text{Mod}^{\tau}_{\Omega C}$ is a weak equivalence of quasi-categories.
\end{theorem}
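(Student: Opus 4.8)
The plan is to verify the two conditions defining a weak equivalence of quasi-categories recalled in Section 2.5: that $\theta$ induces an essentially surjective functor on homotopy categories, and that for every pair of objects it induces a weak equivalence of Kan complexes on right hom spaces. Both $Loc^{\infty}_X$ and $N_{dg}\text{Mod}^{\tau}_{\Omega C}$ are quasi-categories, so the notion applies.

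The second condition has in fact already been established: it is exactly Proposition \ref{hptyinverse}, which exhibits for all $P, Q \in (Loc^{\infty}_X)_0$ an explicit homotopy inverse $\psi$ to the map $\theta \colon \text{Hom}^R_{Loc^{\infty}_X}(P,Q) \to \text{Hom}^R_{N_{dg}\text{Mod}^{\tau}_{\Omega C}}(\theta(P),\theta(Q))$. So the only remaining point is essential surjectivity, and I claim it is formal because $\theta$ is already surjective on objects. The objects of $N_{dg}\text{Mod}^{\tau}_{\Omega C}$, hence of $ho(N_{dg}\text{Mod}^{\tau}_{\Omega C})$, are the bounded-below right dg $\Omega C$-modules. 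Given such a module $(M,d_M)$, transport its module structure along the isomorphism of dg algebras $\varphi \colon \Omega C \xrightarrow{\cong} \Lambda(\text{Sing}(X,b))(b,b)$ of Theorem \ref{cobar}; since $\Lambda(\text{Sing}(X,b))$ has the single object $b$, this data is precisely a dg functor $\tilde{P} \colon \Lambda(\text{Sing}(X,b)) \to Ch_{\mathbf{k}}$ with $\tilde{P}(b) = M$, which by the $(\Lambda, N_{dg})$-adjunction is the same thing as a $0$-simplex $P \in (Loc^{\infty}_X)_0$, i.e. an $\infty$-local system over $X$. Unwinding the definition of $\theta$ on objects, $\theta(P)$ is $M$ equipped with exactly this module structure, so $\theta$ hits every object of the target; in particular the induced functor $ho(Loc^{\infty}_X) \to ho(N_{dg}\text{Mod}^{\tau}_{\Omega C})$ is essentially surjective.

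Combining the two conditions with the characterization of Section 2.5 then finishes the proof. The only substantive ingredient is Proposition \ref{hptyinverse}, already proved above through the construction of $\psi$ together with the acyclic-models chain homotopy $H$, so no real obstacle remains. The sole place warranting a line of care is verifying that $\theta(P)$ on objects returns the original module structure on $M$ rather than, say, the one over the opposite algebra; this is exactly the point where the explicit isomorphism $\varphi$ of Theorem \ref{cobar}, which intertwines the concatenation product on $\Omega C$ with composition in $\Lambda(\text{Sing}(X,b))(b,b)$, is meant to be used, so it reduces to bookkeeping with the conventions already fixed in Section 4.
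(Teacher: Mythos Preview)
Your proposal is correct and follows essentially the same route as the paper: both verify essential surjectivity by using the $(\Lambda,N_{dg})$ adjunction together with the isomorphism $\Omega C \cong \Lambda(\text{Sing}(X,b))(b,b)$ of Theorem \ref{cobar} to identify objects of the target with $0$-simplices of $Loc^{\infty}_X$, and both invoke Proposition \ref{hptyinverse} for the homotopy equivalence on right hom spaces. Your version is simply more explicit about the bijection on objects, which is harmless.
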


\begin{proof}The adjunction $(\Lambda, N_{dg})$ implies that a right dg module over the dg algebra $$ \Omega C \cong \Lambda(\text{Sing}(X,b))(b,b)$$ is equivalent to a morphism of simplicial sets $\text{Sing}(X,b) \to N_{dg}Ch_{\mathbf{k}}$, so $\theta: Loc^{\infty}_X \to N_{dg} \text{Mod}^{\tau}_{\Omega C}$ induces an essentially surjective functor of homotopy categories. It follows from Proposition \ref{hptyinverse} that $\theta: Loc^{\infty}_X \to N_{dg} \text{Mod}^{\tau}_{\Omega C}$ induces a homotopy equivalence of Kan complexes at the level of mapping spaces. 
\end{proof}

\begin{remark} There are two other models for the $\infty$-category $N_{dg}Mod^{\tau}_{\Omega C}$ in the literature. In \cite{BlSm14} it is shown that, when $X$ is a manifold and $C$ is the dg coalgebra of smooth chains on $X$, the dg category $Mod^{\tau}_{\Omega C}$ is $A_{\infty}$-quasi-equivalent to a dg category of smooth $\mathbb{Z}$-graded vector bundles over $X$ with homotopy-coherently flat super connection (with certain finiteness assumptions). This statement generalizes the classical Riemann-Hilbert correspondence. It is explained in section 8.4 of \cite{Po11} that for any conilpotent dg coalgebra $C$ there is a model structure on the category of dg $C$-comodules which is Quillen equivalent to the standard model structure on the category of dg $\Omega C$-modules. This is a manifestation of Koszul duality. A Quillen equivalence of model categories induces an equivalence of the associated $\infty$-categories. The $\infty$-category associated to the model category of dg $\Omega C$-modules is equivalent to $N_{dg}Mod^{\tau}_{\Omega C}$. 
\end{remark}

\section{The colimit of an $\infty$-local system}

We use Theorem \ref{main} to obtain a small model for the colimit of an $\infty$-local system. We recall the definition and a criterion given in \cite{Lu09} for the (homotopy) colimit of  $\beta: K \to \mathcal{C}$, where $K$ is a simplicial set, $\mathcal{C}$ a quasi-category, and $\beta$ a map of simplicial sets. We first recall some notation. For any two simplicial sets $K$ and $L$ denote by $K \star L$ the simplicial set whose set of $n$-simplices is given by 
\begin{equation*}
(K \star L)_n= K_n \cup L_n \cup \bigcup_{i+j=n-1}K_i \times L_j
\end{equation*}
with structure maps induced by those in $L$ and $K$. In particular, we call $K \star \Delta^0$ the \textit{right cone on K}. Define $\mathcal{C}_{\beta/}$ to be the simplicial set whose set of $n$-simplices is 
\begin{equation*}
(\mathcal{C}_{\beta/})_n=\text{Hom}_{\beta}( K \star \Delta^n, \mathcal{C}),
\end{equation*}
the set of simplicial set maps $K \star \Delta^n \to \mathcal{C}$ extending $\beta: K \to \mathcal{C}$. By Proposition 1.2.9.3 in \cite{Lu09}, if $\mathcal{C}$ is a quasi-category then so is $\mathcal{C}_{\beta/}$. An object $x$ in a quasi-category $\mathcal{D}$ is called \textit{initial} if the right hom space $Hom^R_{\mathcal{D}}(x,y)$ is a contractible Kan complex for all objects $y$ in $\mathcal{D}$. 
\begin{definition} 
Given any map of simplicial sets $\beta: K \to \mathcal{C}$ where $\mathcal{C}$ is a quasi-category, a \textit{colimit} for $\beta$ is defined to be an initial object of $\mathcal{C}_{\beta/}$. Note that an object of $\mathcal{C}_{\beta/}$ can be identified with a map $\overline{\beta}: K \star \Delta^0 \to \mathcal{C}$ extending $\beta$. We sometimes abuse notation and refer to $\overline{\beta}(*) \in \mathcal{C}$ as the colimit of $\beta$, where $*$ denotes the cone point of $K \star \Delta^0$. 
\end{definition}
We have the following criterion (Lemma 4.2.4.3 in \cite{Lu09}) for detecting colimits in quasi-categories:
\begin{lemma}\label{crit}
Let $\mathcal{C}$ be a quasi-category, $K$ a simplicial set and $\overline{\beta}: K \star \Delta^0 \to \mathcal{C}$ a map of simplicial sets. Then the following conditions are equivalent:
\begin{itemize}
\item $\overline{\beta}: K \star \Delta^0 \to \mathcal{C}$ is a colimit of $\beta=\overline{\beta}|_{K}$
\item Let $\delta: \mathcal{C} \to \text{Fun}(K, \mathcal{C})$ denote the diagonal embedding, and let $\alpha: \beta \to \delta( \overline{\beta}(*) )$ denote the natural transformation determined by $\overline{\beta}$. Then, for every object $Y \in \mathcal{C}$, composition with $\alpha$ induces a homotopy equivalence
\begin{equation*}
\text{Hom}^R_{\mathcal{C} } (\overline{\beta}(*), Y) \to \text{Hom}^R_{\text{Fun}(K,\mathcal{C})}(\beta, \delta(Y)).
\end{equation*}
\end{itemize}
\end{lemma}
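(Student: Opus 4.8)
The plan is to reduce the statement to the standard fact that, by definition, a colimit of $\beta$ is an initial object of the slice quasi-category $\mathcal{C}_{\beta/}$, and then to compute the right hom spaces of $\mathcal{C}_{\beta/}$ out of the object corresponding to $\overline{\beta}$. Recall that $\mathcal{C}_{\beta/}$ is a quasi-category (by Proposition 1.2.9.3 of \cite{Lu09}), so for any two of its objects the right hom space is a Kan complex, and an object $x$ of a quasi-category is initial precisely when $\text{Hom}^R(x,z)$ is contractible for every $z$. Thus the first condition of the lemma is equivalent to: $\text{Hom}^R_{\mathcal{C}_{\beta/}}(\overline{\beta}, Z)$ is contractible for every object $Z$ of $\mathcal{C}_{\beta/}$.

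First I would unwind the objects and hom spaces of $\mathcal{C}_{\beta/}$ in terms of $\mathcal{C}$ and $\text{Fun}(K,\mathcal{C})$. An object $Z$ of $\mathcal{C}_{\beta/}$ is a map $K \star \Delta^0 \to \mathcal{C}$ extending $\beta$; evaluation at the cone point $*$ gives a vertex $Y = Z(*)$ of $\mathcal{C}$, and precomposition with the natural map $K \times \Delta^1 \to K \star \Delta^0$ (inclusion of $K$ at $0$, collapse to $*$ at $1$) turns $Z$ into a morphism $\beta \to \delta(Y)$ in $\text{Fun}(K,\mathcal{C})$. This assignment is part of a comparison map
\begin{equation*}
\mathcal{C}_{\beta/} \longrightarrow \text{Fun}(K,\mathcal{C})_{\beta/} \times_{\text{Fun}(K,\mathcal{C})} \mathcal{C}
\end{equation*}
which is a categorical equivalence, since the slice construction using joins agrees with the one using $\text{Fun}$ (as in \cite{Lu09}); in particular, as $Z$ ranges over the objects of $\mathcal{C}_{\beta/}$ lying over a fixed $Y$, the associated morphisms $\beta \to \delta(Y)$ realize every path component of $\text{Hom}^R_{\text{Fun}(K,\mathcal{C})}(\beta, \delta(Y))$. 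The distinguished object $\overline{\beta}$ has cone point $c_0 := \overline{\beta}(*)$ and corresponds to the natural transformation $\alpha$ of the statement.

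The crux is the mapping space computation: using that the evaluation map $\mathcal{C}_{\beta/} \to \mathcal{C}$ is a left fibration and unwinding the definition of $\text{Hom}^R$ on the simplicial sets $J^n$, one identifies $\text{Hom}^R_{\mathcal{C}_{\beta/}}(\overline{\beta}, Z)$ with the homotopy fiber, over the point determined by $Z$, of the map
\begin{equation*}
\text{Hom}^R_{\mathcal{C}}(c_0, Y) \longrightarrow \text{Hom}^R_{\text{Fun}(K,\mathcal{C})}(\beta, \delta(Y))
\end{equation*}
induced by composition with $\alpha$, i.e. $f \mapsto \delta(f)\circ\alpha$. Granting this, the first condition says that every homotopy fiber of this map over a point arising from an object of $\mathcal{C}_{\beta/}$ is contractible; since such points meet every path component of the target and homotopy fibers over points in the same component are equivalent, this forces all homotopy fibers to be contractible, which (by the long exact sequence of a fibration of Kan complexes) amounts to the map being a homotopy equivalence for every $Y$. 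Since every step in this chain of reformulations is an equivalence, the first and second conditions are equivalent.

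The main obstacle is the identification in the previous paragraph of the slice hom space as a homotopy fiber, together with the comparison of the two slice models; both rest on the theory of left fibrations and joins developed in \cite{Lu09}, and this is in fact the content of the proof of Lemma 4.2.4.3 there. In the present paper nothing of this needs to be re-proved: the lemma is simply quoted from \cite{Lu09} and then applied — together with the model $N_{dg}\text{Mod}^{\tau}_{\Omega C}$ of Theorem \ref{main} — to reduce the computation of the colimit of an $\infty$-local system to a hom-space computation.
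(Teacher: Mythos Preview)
Your final paragraph already identifies the situation correctly: in the paper this lemma carries no proof at all---it is stated verbatim as Lemma 4.2.4.3 of \cite{Lu09} and simply invoked as a black box in the proof of Theorem \ref{colimprop}. So there is nothing to compare your argument against; the paper's ``proof'' is the citation.

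The sketch you give is a reasonable outline of how the result is established in \cite{Lu09}: reduce to initiality in $\mathcal{C}_{\beta/}$, use the left fibration $\mathcal{C}_{\beta/}\to\mathcal{C}$ and the comparison of join and product slice models to identify $\text{Hom}^R_{\mathcal{C}_{\beta/}}(\overline{\beta},Z)$ with the homotopy fiber of the composition map, then conclude via the long exact sequence. One small point worth tightening: you assert that objects of $\mathcal{C}_{\beta/}$ over a fixed $Y$ hit every path component of $\text{Hom}^R_{\text{Fun}(K,\mathcal{C})}(\beta,\delta(Y))$; this is exactly where the categorical equivalence between the two slice constructions (Proposition 4.2.1.5 in \cite{Lu09}) does real work, and it deserves to be named rather than absorbed into ``as in \cite{Lu09}.'' But since the paper itself treats the whole lemma as a citation, none of this affects the comparison you were asked to make.
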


We give a model for the colimit of an $\infty$-local system of chain complexes in terms of the twisted tensor product construction. Let $(X,b)$ be a path-connected pointed space and $\beta: \text{Sing}(X,b) \to N_{dg}Ch_{\mathbf{k}}$ be an $\infty$-local system over $X$. Denote $(M, d_M)=\beta(b)$ and $C=(C^N_*(\text{Sing}(X,b)), \partial, \Delta)$, so that $M$ is a right dg $\Omega C$-module. Define a map $\overline{\beta}: \text{Sing}(X,b) \star \Delta^0 \to N_{dg}Ch_{\mathbf{k}}$ which extends $\beta$, by setting $\overline{\beta}(*):= (M \otimes C, \partial_{\otimes_{\tau}})$, and for any simplex $\sigma \in (\text{Sing}(X,b) \star \Delta^0)_n$ such that $n>0$ and the first and last vertices of $\sigma$ are $b$ and $*$ respectively, let $(\overline{\beta}(\sigma): (M, d_M) \to (M \otimes C, \partial_{\otimes_{\tau}})) \in (N_{dg}Ch_{\mathbf k})_{n}$ be the map of degree $n-1$ defined by $\overline{\beta}(\sigma)(m)= m \otimes \partial_n(\sigma) $, where $\partial_n(\sigma)$ is the last face of $\sigma$ (the face opposite to $*$, which lies inside $\text{Sing}(X,b)$). In particular, for any $1$-simplex $\sigma$ from $b$ to $*$, the $1$-simplex $\overline{\beta}(\sigma): (M,d_M) \to (M \otimes C, \partial_{\otimes_{\tau}})$ in $(N_{dg}Ch_{\mathbf{k}})_1$ is the inclusion chain map $m \mapsto m \otimes \sigma_b$ where $\sigma_b$ is the unique generator of $C_0$. This yields a map of simplicial sets $\overline{\beta}: \text{Sing}(X,b) \star \Delta^0 \to N_{dg}Ch_{\mathbf{k}}$. 

\begin{theorem} \label{colimprop} For any $\infty$-local system $\beta: \text{Sing}(X,b) \to N_{dg}Ch_{\mathbf{k}}$ the map $\overline{\beta}: \text{Sing}(X,b) \star \Delta^0 \to N_{dg}Ch_{\mathbf{k}}$ constructed above is a colimit of $\beta$. 
\end{theorem}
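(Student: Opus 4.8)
The plan is to verify the second condition in the detection criterion, Lemma~\ref{crit}: for every object $Y$ of $N_{dg}Ch_{\mathbf{k}}$ one must show that composition with the natural transformation $\alpha\colon\beta\to\delta(\overline{\beta}(*))$ determined by $\overline{\beta}$ induces a homotopy equivalence of Kan complexes
\[
\text{Hom}^R_{N_{dg}Ch_{\mathbf{k}}}\big(\overline{\beta}(*),\,Y\big)\ \longrightarrow\ \text{Hom}^R_{Loc^{\infty}_X}\big(\beta,\,\delta(Y)\big).
\]
Since $\overline{\beta}(*)=(M\otimes C,\partial_{\otimes_{\tau}})$ is bounded below ($M$ is bounded below and $C$ is non-negatively graded), the left-hand side is a mapping space of the dg nerve, so its homotopy groups in degrees $\geq 0$ are the homology groups $H_{*}\big(Ch_{\mathbf{k}}((M\otimes C,\partial_{\otimes_{\tau}}),Y)\big)$ of the internal hom complex (cf.~\cite{Lu11}). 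The strategy is to identify the right-hand side with the mapping space of the same complex and then check that the comparison map is the identity.

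For the right-hand side I would use the equivalence $\theta\colon Loc^{\infty}_X\xrightarrow{\ \sim\ }N_{dg}\text{Mod}^{\tau}_{\Omega C}$ of Theorem~\ref{main}, which by Proposition~\ref{hptyinverse} is a homotopy equivalence on every right hom space, with explicit homotopy inverse $\psi$. Unwinding the construction of $\theta$ through the isomorphism $\varphi$ of Theorem~\ref{cobar}, one sees that $\theta(\beta)$ is the right dg $\Omega C$-module $M$ attached to $\beta$, and that $\theta(\delta(Y))$ is the complex $Y$ equipped with the \emph{trivial} $\Omega C$-module structure $Y_{\mathrm{triv}}$, i.e.\ the action through the augmentation $\mu\colon\Omega C\to\mathbf{k}$ (the constant functor $\delta(Y)$ corresponds under $(\Lambda,N_{dg})$ to the dg functor $\Lambda(\text{Sing}(X,b))\to Ch_{\mathbf{k}}$ whose morphism map is $\Omega C\xrightarrow{\mu}\mathbf{k}\to Ch_{\mathbf{k}}(Y,Y)$). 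For a trivial module the last summand in the differential~\ref{deltahat} vanishes, because $\tau$ lands in $\overline{\Omega C}=\ker\mu$; identifying $C$-comodule maps into the cofree comodule $Y\otimes C$ with linear maps $M\otimes C\to Y$ then shows that $\text{Mod}^{\tau}_{\Omega C}(M,\,Y_{\mathrm{triv}})$ and $Ch_{\mathbf{k}}\big((M\otimes C,\partial_{\otimes_{\tau}}),Y\big)$ share the same underlying graded space and, after the standard sign normalization passing between twisted tensor products and internal homs, the same differential (equivalently, by Proposition~\ref{bar complex} both compute $\text{Hom}_{\mathbf{k}}(M\otimes^{\mathbb{L}}_{\Omega C}\mathbf{k},\,Y)$). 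Thus, after transporting along $\theta$, both sides of the comparison map are mapping spaces of one and the same complex.

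It then remains to check that ``composition with $\alpha$'' corresponds, under these identifications, to the identity. From the explicit formula for $\overline{\beta}$ given just before the statement, the component of $\alpha$ at the unique vertex $b$ is the canonical inclusion $\iota_M\colon M\to(M\otimes C,\partial_{\otimes_{\tau}})$, $m\mapsto m\otimes 1$ (a chain map since $\tau(1)=0$ and $d_C1=0$), while the higher simplices of $\alpha$ are forced by $\overline{\beta}(\sigma)(m)=m\otimes\partial_n(\sigma)$. Using the description of $\theta$ via the maps $EZ$ of~\ref{EZ}, I would show that $\theta(\alpha)$ is, up to homotopy, the comodule-structure map $\mathrm{id}_M\otimes\Delta\colon M\otimes C\to M\otimes C\otimes C$, whose underlying linear map is $\mathrm{id}_{M\otimes C}$; consequently composing a morphism $\psi\colon(M\otimes C,\partial_{\otimes_{\tau}})\to Y$ with $\alpha$ and transporting along $\theta$ returns $\psi$ itself, and similarly on higher simplices. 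By Lemma~\ref{crit} this proves that $\overline{\beta}$ is a colimit of $\beta$.

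The main obstacle is precisely this last compatibility: one must verify that the \emph{explicitly defined} extension $\overline{\beta}$ induces the correct comparison map, not merely that source and target are abstractly equivalent, and this comes down to unravelling $\theta$ on the simplices and coherence data of $\alpha$ together with some sign bookkeeping. As a consistency check and an alternative route, one may instead factor $\theta$ through $N_{dg}\text{Mod}^{\infty}_{\Omega C}$ and invoke Proposition~\ref{bar complex}, which identifies $(M\otimes C,\partial_{\otimes_{\tau}})$ with the reduction of $M\otimes B\Omega C=B(M,\Omega C,\mathbf{k})$ and hence exhibits $\overline{\beta}(*)$ as a model for $M\otimes^{\mathbb{L}}_{\Omega C}\mathbf{k}$, the expected value of the colimit in view of the classical picture recalled in the introduction.
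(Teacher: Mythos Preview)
Your proposal is correct and follows essentially the same route as the paper: both apply Lemma~\ref{crit}, identify $\text{Hom}^R_{N_{dg}Ch_{\mathbf{k}}}\big((M\otimes C,\partial_{\otimes_\tau}),Y\big)$ with $\text{Hom}^R_{N_{dg}\text{Mod}^{\tau}_{\Omega C}}(M,Y_{\mathrm{triv}})$ via the vanishing of the last term in~\ref{deltahat}, and then invoke the equivalence of Theorem~\ref{main} on mapping spaces. The only cosmetic difference is that the paper phrases the comparison using the explicit homotopy inverse $\psi$ of~\ref{psi} (going from $N_{dg}\text{Mod}^{\tau}_{\Omega C}$ to $Loc^{\infty}_X$) and checks that its composite with the isomorphism of step~1 \emph{equals} composition with $\alpha$, whereas you transport along $\theta$ in the other direction; since $\theta$ and $\psi$ are mutual homotopy inverses this is the same verification, and the ``main obstacle'' you flag is exactly the compatibility the paper records in its observation~2.
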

\begin{proof} The result follows from Lemma \ref{crit} with $K=\text{Sing}(X,b)$ and $\mathcal{C}=N_{dg}Ch_{\mathbf{k}}$ so that $\text{Fun}(K, \mathcal{C})= Loc_X^{\infty}$, together with the  following two observations:
\\
1) for any chain complex $(Y,d_Y) \in Ch_{\mathbf{k}}$ with the trivial dg $\Omega C$-module structure
\begin{equation*}
 Hom^R_{N_{dg} Ch_{\mathbf{k}}}(   (M \otimes C, \partial_{\otimes_{\tau}}) , (Y,d_Y)) \cong Hom^R_{N_{dg} Mod^{\tau}_{\Omega C}} ( (M, d_M)  , (Y,d_Y)),
\end{equation*}
and
\\
2) by the proof of Theorem \ref{main}, the natural map of Kan complexes described in \ref{psi}
\begin{equation*}
Hom^R_{N_{dg} Mod^{\tau}_{\Omega C}}( (M,d_M), (Y,d_Y) ) \to Hom^R_{Loc^{\infty}_X}(\beta , \delta(Y,d_Y))
\end{equation*}
is a homotopy equivalence and the compositions of maps 1) and 2) above coincides with the homotopy equivalence induced by the natural transformation $\alpha: \beta \to \delta( \overline{\beta}(*) )$ as described in the second item of Lemma \ref{crit}.
\end{proof} 

\section{Recovering a classical result of Brown}

For any fibration $\pi: E\to X$ over a path-connected space $X$, the path lifting property induces a dg $C_*(\Omega_bX)$-module structure on $C_*(F)$, the singular chains on the fiber $F=\pi^{-1}(b)$. Hence, $C_*(F)$ becomes dg $\Omega C$-module where $C=(C^N_*(\text{Sing}(X,b) ), \partial, \Delta)$. An explicit chain map $\Upsilon: \Omega C \to C^{\square}_*(\Omega_b X)$ is given in \cite{Ad52}, where  $C^{\square}_*(\Omega_b X)$  denotes the normalized singular cubical chains on $\Omega_bX$. Adams defines $\Upsilon$ by constructing a collection of maps $\{ \upsilon_n: [0,1]^{n-1} \to P_{0,n}|\Delta^n|\}$, where $P_{0,n}|\Delta^n|$ is the space of Moore paths in the topological $n$-simplex $|\Delta^n|$ from vertex $0$ to vertex $n$. The maps $\{ \upsilon_n \}$ satisfy a compatibility equation that relates the cubical faces of $[0,1]^{n-1}$ to the simplicial faces and the Alexander-Whitney coproduct terms on $\Delta^n$ and implies that $\Upsilon$ is a chain map.

We give a more conceptual proof of the main result of \cite{Br59} which says that the chains on the total space of a fibration may be modeled as a twisted tensor product between chains on the fiber and chains on the base. 

\begin{theorem}\cite{Br59}
Let $\pi: E \to X$ be a fibration over a path-connected space $X$ with $F = \pi^{-1}(b)$ for $b \in X$. Then, there is weak equivalence of chain complexes 
\begin{equation}
C_*(E) \simeq ( C_*(F) \otimes C, \partial_{\otimes_\tau}).
\end{equation}
\end{theorem}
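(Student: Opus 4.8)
The plan is to recognize $C_*(E)$ as the $\infty$-categorical colimit of an $\infty$-local system canonically attached to the fibration $\pi$, and then to read off that colimit from Theorem~\ref{colimprop}. So the argument has three stages: build the $\infty$-local system $\beta_\pi$, identify $\mathrm{colim}(\beta_\pi)$ with $C_*(E)$ by an independent (homotopy-colimit-of-fibers) argument, and then apply Theorem~\ref{colimprop} to identify the same colimit with the twisted tensor product.

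\textbf{Building $\beta_\pi$.} After replacing $\pi$ by a Hurewicz fibration if necessary (which does not change the weak homotopy types involved), fix a Moore-path lifting function for $\pi$; this makes the based Moore loop space $\Omega_b X$ act \emph{strictly} and associatively on the fiber $F$, hence act on normalized cubical singular chains, giving a strict dg module structure $C^\square_*(\Omega_b X)\otimes C^\square_*(F)\to C^\square_*(F)$. Precomposing with Adams' dg-algebra map $\Upsilon\colon \Omega C\to C^\square_*(\Omega_b X)$ and with the isomorphism $\varphi^{-1}\colon \Lambda(\mathrm{Sing}(X,b))(b,b)\xrightarrow{\cong}\Omega C$ of Theorem~\ref{cobar} yields a dg functor $\widetilde{\beta_\pi}\colon \Lambda(\mathrm{Sing}(X,b))\to Ch_{\mathbf k}$ with $\widetilde{\beta_\pi}(b)=C^\square_*(F)$, i.e., by the adjunction $(\Lambda, N_{dg})$, an $\infty$-local system $\beta_\pi\colon \mathrm{Sing}(X,b)\to N_{dg}Ch_{\mathbf k}$. (One may use simplicial rather than cubical chains on $F$ here; the two are naturally quasi-isomorphic as dg $\Omega C$-modules, so the twisted tensor products they produce are quasi-isomorphic, and the final statement is unaffected.) By construction $\beta_\pi$ is the linearization $C_*\circ\chi_\pi$ of the monodromy functor $\chi_\pi\colon \mathrm{Sing}(X,b)\to\mathcal{S}$ classifying the Kan fibration $\mathrm{Sing}(E)\times_{\mathrm{Sing}(X)}\mathrm{Sing}(X,b)\to\mathrm{Sing}(X,b)$.

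\textbf{Identifying the colimit with $C_*(E)$.} The $\infty$-categorical colimit of $\chi_\pi$ in $\mathcal{S}$ is the total space of the classified left fibration, namely $|\mathrm{Sing}(E)\times_{\mathrm{Sing}(X)}\mathrm{Sing}(X,b)|\simeq|\mathrm{Sing}(E)|\simeq E$, where we use that $X$ is path-connected so that $\mathrm{Sing}(X,b)\hookrightarrow\mathrm{Sing}(X)$ is a weak equivalence. The linearization functor $C_*\colon\mathcal{S}\to\{\text{chain complexes}\}$ preserves $\infty$-categorical colimits, being modeled by a left Quillen functor; hence $\mathrm{colim}(\beta_\pi)\simeq C_*(E)$ in $N_{dg}Ch_{\mathbf k}$. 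On the other hand, since $\beta_\pi(b)=C_*(F)$ carries exactly the dg $\Omega C$-module structure above, Theorem~\ref{colimprop} gives $\mathrm{colim}(\beta_\pi)\simeq (C_*(F)\otimes C,\partial_{\otimes_\tau})$. Comparing the two computations of $\mathrm{colim}(\beta_\pi)$ yields the desired weak equivalence $C_*(E)\simeq (C_*(F)\otimes C,\partial_{\otimes_\tau})$.

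\textbf{Main obstacle.} The delicate point is the previous paragraph: one must check that $\beta_\pi$ genuinely is the linearization of the fibration's monodromy and that the three operations in play — taking singular chains, forming the homotopy colimit over $\mathrm{Sing}(X)$, and straightening a fibration — are compatible in the $\infty$-categorical sense used here. Concretely, this amounts to verifying that Adams' map $\Upsilon$ composed with the path-lifting action reproduces, up to coherent homotopy, the genuine transport action of $\Omega_b X$ on $F$, which is essentially the content of \cite{Ad52} together with Corollary~\ref{rize17}. A route that avoids the straightening formalism is to construct a natural quasi-isomorphism $(C_*(F)\otimes C,\partial_{\otimes_\tau})\to C_*(E)$ directly, filter both sides by the skeleta of (a CW model of) $X$, and compare the resulting spectral sequences — this recovers Brown's original argument from \cite{Br59}; but the colimit route is the conceptual proof this section advertises.
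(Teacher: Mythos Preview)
Your proposal is correct and follows essentially the same route as the paper: construct from the fibration an $\infty$-local system $\beta_\pi$ with $\beta_\pi(b)=C_*(F)$ via Adams' maps and path lifting, identify it as the linearization of the space-valued monodromy functor whose colimit is $E$, use that $C_*$ preserves $\infty$-colimits to get $\mathrm{colim}\,\beta_\pi\simeq C_*(E)$, and then invoke Theorem~\ref{colimprop}. The only cosmetic difference is that the paper builds the space-valued functor $\varphi_\pi\colon \mathrm{Sing}(X,b)\to\mathcal{S}$ first (via the $\infty$-Grothendieck correspondence $\mathcal{S}_{/X}\simeq\mathrm{Fun}(\mathrm{Sing}(X,b),\mathcal{S})$ and Adams' maps $\upsilon_n$) and then applies chains, whereas you produce the dg $\Omega C$-module structure on $C_*(F)$ directly and pass through the $(\Lambda,N_{dg})$ adjunction; these are two descriptions of the same object.
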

\begin{proof}

Let $\mathcal{S}$ be the $\infty$-category of spaces and and $\mathcal{S}_{/X} $ the $\infty$-category of spaces over $X$, both thought of as quasi-categories. Recall that by the $\infty$-categorical version of the Grothendieck construction there is a weak equivalence of quasi-categories 
\begin{equation}
\varphi: \mathcal{S}_{/X} \simeq \text{Fun}(\text{Sing}(X,b), \mathcal{S}).
\end{equation}
Hence, any fibration $\pi: E \to X$ gives rise to a functor of quasi-categories $\varphi_{\pi}: \text{Sing}(X,b) \to \mathcal{S}$ and $\text{colim } \varphi_{\pi} \simeq E$, where $\text{colim}$ denotes the (homotopy) colimit as defined in the section 6. The fact that $\text{colim } \varphi_{\pi} \simeq E$ follows from Proposition 2.1 of the exposition \cite{Ma-Ge15}; more details may be found in section 3.3.4 of \cite{Lu09}. 

An explicit description of $\varphi_{\pi}$ may be obtained as follows: for $b \in  \text{Sing}(X,b)_0$ we have $\varphi_{\pi}(b) =\pi^{-1}(b) = F$, for any $1$-simplex $\gamma \in \text{Sing}(X,b)_1$ the path lifting property provides a homotopy equivalence $\varphi_{\pi}(\gamma): F \to F$, and in general, for any $\sigma \in \text{Sing}(X,b)_n$ the path lifting property applied to the family of paths $\upsilon_n: [0,1]^{n-1} \to P_{0,n}|\Delta^n|$ constructed by Adams provides a continuous map $\varphi_{\pi}(\sigma): F \times [0,1]^{n-1} \to F$. 

After taking singular cubical chains over $\mathbf{k}$, $\varphi_{\pi}$ gives rise to an $\infty$-local system $\beta_{\pi} : \text{Sing}(X,b) \to N_{dg}Ch_{\mathbf{k}}$, for which $\beta_{\pi}(b)= C_*(F)$, and since the singular chains functor preserves (homotopy) colimits we obtain $\text{colim } \beta_{\pi} \simeq C_*(E)$. It follows from Theorem \ref{colimprop} that $C_*(E) \simeq ( C_*(F) \otimes C, \partial_{\otimes_\tau})$, as desired.

\end{proof}

\bibliographystyle{plain}

 \Addresses

\end{document}